\definecolor{rp}{rgb}{0.25, 0, 0.75}
\definecolor{dg}{rgb}{0, 0.5, 0}
\newcommand{\hy}{\hat y}
\newcommand{\iou}{\int_{0}^{1}}
\newcommand{\ou}{[0,1]}
\newcommand{\1}{{\bf 1}}
\newcommand{\D}{\mathbb D}
\newcommand{\R}{\mathbb R}
\newcommand{\N}{\mathbb N}
\newcommand{\cb}{\mathcal B}
\newcommand{\cac}{\mathcal C}
\newcommand{\cd}{\mathcal D}
\newcommand{\ce}{\mathcal E}
\newcommand{\cf}{\mathcal F}
\newcommand{\ch}{\mathcal H}
\newcommand{\cl}{\mathcal L}
\newcommand{\cs}{\mathcal S}
\newcommand{\crr}{\mathcal R}
\newcommand{\ck}{\mathcal K}
\newcommand{\lp}{\left(}
\newcommand{\rp}{\right)}
\newcommand{\lln}{\left|}
\newcommand{\rrn}{\right|}
\newcommand{\ga}{\gamma}
\newcommand{\la}{\lambda}
\newcommand{\oom}{\Omega}
\newcommand{\si}{\sigma}
\newcommand{\vp}{\varphi}
\newcommand{\al}{\alpha}
\newcommand{\Om}{\Omega}
\newcommand{\om}{\omega}
\newcommand{\del}{\delta}
\newcommand{\ep}{\varepsilon}
\newcommand{\lam}{\lambda}
\newcommand{\Gam}{\Gamma}
\newcommand{\gam}{\gamma}
\newcommand{\ffi}{\varphi}
\newcommand{\hac}{\mathcal{H}}
\newcommand{\re}{\mathbb{R}}
\newcommand{\sig}{\sigma}
\newcommand{\ka}{\kappa}
\newcommand{\tf}{\mathcal{F}}
\newcommand{\beq}{\begin{equation}}
\newcommand{\eeq}{\end{equation}}
\newtheorem{theorem}{Theorem}[section]
\newtheorem{definition}[theorem]{Definition}
\newtheorem{lemma}[theorem]{Lemma}
\newtheorem{proposition}[theorem]{Proposition}
\theoremstyle{remark}
\newtheorem{remark}[theorem]{Remark}
\begin{document}

\title[Pathwise second order SDEs]{Pathwise definition of second order SDEs}

\author{Llu\'{\i}s Quer-Sardanyons \and Samy Tindel}

\address{
{\it Llu\'{\i}s Quer-Sardanyons:}
{\rm Departament de Matem\`atiques, Facultat de Ci\`encies, Edifici C, Universitat Aut\`onoma de Barcelona, 08193 Bellaterra, Spain}.
{\it Email: }{\tt quer@mat.uab.cat}
\newline
$\mbox{ }$\hspace{0.1cm}
{\it Samy Tindel:}
{\rm Institut \'Elie Cartan Nancy, B.P. 239,
54506 Vandoeuvre-l\`es-Nancy Cedex, France}.
{\it Email: }{\tt tindel@iecn.u-nancy.fr}
}

\keywords{Second order SDEs, Young integration, fractional Brownian motion, Malliavin calculus.}

\subjclass[2010]{60H10, 60H05, 60H07}

\date{\today}

\thanks{L. Quer-Sardanyons is supported by the grant MCI-FEDER Ref. MTM2009-08869 and S. Tindel is partially supported by the (French) ANR grant ECRU}

\begin{abstract}
In this article, a class of  second order differential equations on $\ou$, driven by a $\ga$-H\"older continuous function for any value of $\ga\in(0,1)$ and with multiplicative noise, is considered. We first show how to solve this equation in a pathwise manner, thanks to Young integration techniques. We then study the differentiability of the solution with respect to the driving process and consider the case where the equation is driven by a fractional Brownian motion, with two aims in mind: show that the solution we have produced coincides with the one which would be obtained with Malliavin calculus tools, and prove that the law of the solution
is absolutely continuous with respect to the Lebesgue measure.
\end{abstract}

\maketitle

\section{Introduction}
\label{sec:intro}

During the last past years, a growing activity has emerged, aiming at solving stochastic PDEs beyond the Brownian case. In some special situations, namely in linear (additive noise) or bilinear (noisy term of the form $u \, \dot B)$ cases, stochastic analysis techniques can be applied \cite{LS,TTV}. When the driving process of the equation exhibits a H\"older continuity exponent greater than $1/2$, Young integration or fractional calculus tools also allow to solve those equations in a satisfying way \cite{GLT,MN,QT}. Eventually, when one wishes to tackle non-linear problems in which the driving noise is only H\"older continuous with H\"older regularity exponent $\le 1/2$, rough paths analysis must come into the picture. This situation is addressed in \cite{DGT,GT,Te}.

\smallskip

It should be mentioned however that all the articles mentioned above only handle the case of  parabolic or hyperbolic systems, letting apart the case of elliptic equations. This is of course due to the special physical relevance of heat and wave equations, but also stems from a specific technical difficulty inherent to elliptic equations. Indeed, even in the usual Brownian case, the notion of filtration and adapted process is useless in order to solve non-linear elliptic systems, so that It\^o's integration theory is not sufficient in this situation. A natural idea in this context is then to use the power of anticipative calculus, based on Malliavin type techniques (see e.g. \cite{nualart}). This method has however a serious drawback in our context, mainly because the Picard type estimates involve Malliavin derivatives of any order, and cannot be closed. To the best of our knowledge, all the stochastic elliptic equations considered up to now involve thus a mere additive noise. Let us mention for instance the pioneering works \cite{bp,NP} for the existence and uniqueness of solutions, the study of Markov's property \cite{DP,NP}, the numerical approximations of \cite{MS,ST,Ti}, as well as the recent and deep contribution \cite{Pr}, which relates stochastic elliptic systems, anticipative Girsanov's transforms and deterministic methods.

\smallskip

With these preliminary considerations in mind, the aim of the current paper is twofold:

\smallskip

\noindent
\textbf{(i)} We wish to solve a nonlinear elliptic equation of the form 
\beq
\partial_{tt}^{2} z_t=\sig(z_t) \dot x_t,\quad t\in [0,1], \quad y_0=y_1=0,
\label{1}
\eeq
where $\si$ is a smooth enough function from $\R$ to $\R$, and $x$ is a  H\"older continuous noisy input with any H\"older continuity exponent $\ga\in(0,1)$. To this purpose, we shall write equation (\ref{1}) in a variant of the so-called mild form, under which it becomes obvious that the system can be solved in the space $\cac^{\ka}$ of $\ka$-H\"older continuous functions, for any $1-\ga<\ka<1$ (see Section \ref{prel} for a precise definition of this space). 

\smallskip

Let us observe however that, when dealing with a non-linear multiplicative noise, one is not allowed to use the monotonicity methods invoked in \cite{bp}. This forces us to use contraction type arguments, which can be applied only provided the H\"older norm of $x$ is small enough. In order to overcome this restriction, we shall introduce a positive constant $M$, and replace the diffusion coefficient $\si$ by a function $\si_M:\R\times\cac^\ga\to\R$ such that $y\mapsto\si_M(y,x)$ is regular enough and $\si_M(\cdot,x)\equiv 0$ whenever $\|x\|_{\ga}\ge M+1$. We shall thus produce a \emph{local} solution to equation (\ref{1}), in the sense given for instance in \cite{nualart} concerning the localization of the divergence operator on the Wiener space. Once this change is made, a proper definition of the solution plus a fixed point argument leads to the existence and uniqueness of solution for equation (\ref{1}).

\smallskip

\noindent
\textbf{(ii)} Having produced a unique solution to our system in a reasonable class of functions, one may wonder if this solution could have been obtained thanks to Malliavin calculus techniques, in spite of the fact that a direct application of those techniques to our equation do not yield a satisfying solution in terms of fixed point arguments. In order to answer this question, we shall prove that, when $x$ is a fractional Brownian motion (fBm in the sequel), the solution is differentiable enough in the Malliavin calculus sense, so that the stochastic integrals involved in the mild formulation of (\ref{1}) can be interpreted as Skorohod integrals plus a trace term, or better said as Stratonovich integrals. This will be achieved by differentiating the deterministic equation (\ref{1}) with respect to the driving noise $x$ and identifying this derivative with the usual Malliavin derivative, as done in \cite{BH,LT,NS}. As a by-product, we will also be able to study the density of the random variable $z_t$ for a fixed time $t\in(0,1)$.

\smallskip

We shall thus obtain the following result, which is stated here in a rather loose form (the reader is sent to the corresponding sections for detailed statements):
\begin{theorem}\label{thm:intro}
Consider $x\in\cac^\ga$ for a given $\ga>0$, a constant $M>0$ and a $\cac^4(\R)$ function $\si$, such that $\|\sig^{(j)}\|_{\infty}\leq \frac{c_j}{M+1}$ for any $j=0,1,2$ with some small enough constants $c_j$. Let $\si_M$ be the localized diffusion coefficient alluded to above (see  Definition \ref{hyp:phi} for more details). Then

\smallskip

\noindent
\emph{(1)} The equation
\begin{equation}\label{eq:elliptic-intro2}
\partial_{tt}^{2} z_t=\sig_M(x,z_t) \dot x_t,\quad t\in [0,1], \quad z_0=z_1=0
\end{equation}
admits a unique solution, lying in a space of the form $\cac^\ka$ for any $1-\ga<\ka<1$.

\smallskip

\noindent
\emph{(2)} Assume $x$ to be the realization of a fractional Brownian motion with Hurst parameter $H>1/2$. Then for any $t\in[0,1]$, $z_t$ is an element of the Malliavin-Sobolev space $\D^{1,2}$ and the integral form of (\ref{eq:elliptic-intro2}) can be interpreted by means of Skorohod integrals plus trace terms (see Section \ref{fbm} for further definitions).

\smallskip

\noindent
\emph{(3)}  Still in the fBm context, with a slight modification of our cutoff coefficient $\si_M$ and under the non-degeneracy condition $|\sig(y)|\geq \sig_0>0$ for all $y\in \re$, one gets the following result: for any $t\in(0,1)$ and $a>0$, the restriction of $\cl(z_t)$ to $\R\backslash (-a,a)$ admits a density with respect to Lebesgue's measure.
\end{theorem}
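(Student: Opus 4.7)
For a proof of Theorem \ref{thm:intro}, I would proceed in three stages matching the three parts of the statement.

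For part (1), the natural first step is to recast (\ref{eq:elliptic-intro2}) in mild form via the Green's function $G(t,s)$ of $-\partial_{tt}^{2}$ on $[0,1]$ with Dirichlet boundary conditions, namely $G(t,s)=s(1-t)$ for $s\leq t$ and $G(t,s)=t(1-s)$ otherwise. Any reasonable solution must then satisfy
\[
z_t \,=\, -\int_0^1 G(t,s)\,\sig_M(x,z_s)\,dx_s,
\]
the integral being taken in the Young sense---legitimate because a candidate solution lies in $\cac^\ka$ with $\ga+\ka>1$. I would set up the associated map $\Phi:\cac^\ka\to\cac^\ka$ and establish a Young-type estimate of the form
$\|\Phi(z)-\Phi(\tilde z)\|_\ka \le C\|x\|_\ga(\|\sig_M'\|_\infty+\|x\|_\ga\|\sig_M''\|_\infty)\|z-\tilde z\|_\ka$,
combining the bound $\|\sig^{(j)}\|_\infty\le c_j/(M+1)$ with the cutoff $\sig_M(\cdot,x)\equiv 0$ for $\|x\|_\ga\ge M+1$ to make the constant strictly less than $1$ on a ball of appropriate radius. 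Banach's fixed point theorem then yields existence and uniqueness in $\cac^\ka$.

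For part (2), I would work on the canonical fBm space and first analyze, in a purely deterministic way, the Fr\'echet differentiability of the solution map $x\mapsto z$ along directions $h$ in the Cameron--Martin space of $x$. Differentiating the mild equation shows that $Z_t^h:=D^h z_t$ satisfies a linear Young-type equation with coefficients built from $\partial_y\sig_M$, $\partial_x\sig_M$ and $x$ itself, and a standard computation shows that this linear equation is well posed in $\cac^\ka$. An approximation of $x$ by smooth paths together with a closability argument in the spirit of \cite{NS} then identifies the pathwise derivative with the Malliavin derivative $Dz_t$, yielding $z_t\in\D^{1,2}$. Finally, invoking the transfer principle between Young and Skorohod integrals valid for $H>1/2$,
\[
\int_0^1 F_s\,dx_s \,=\, \int_0^1 F_s\,\delta x_s + \al_H\int_{[0,1]^2} D_s F_u\,|s-u|^{2H-2}\,du\,ds,
\]
applied with $F_s=G(t,s)\,\sig_M(x,z_s)$, the mild equation is rewritten as a Skorohod integral plus an explicit trace term involving $Dz$.

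For part (3), I would rely on the local Bouleau--Hirsch criterion: if $z_t\in\D^{1,2}$ and $\|Dz_t\|_\hac>0$ almost surely on a measurable set $A\subset\Om$, then the law of $z_t$ restricted to the image of $A$ admits a density with respect to Lebesgue measure. Applying this with $A=\{|z_t|\ge a\}$, the slightly modified cutoff is designed so that $\sig_M$ coincides with $\sig$ on $A$, and the linear equation satisfied by $Dz_t$ then has coefficients bounded below thanks to $|\sig|\ge\sig_0>0$. A Gronwall-type estimate in the Young setting, combined with the non-vanishing of $G(t,\cdot)$ on $(0,1)$, forces $\|Dz_t\|_\hac>0$ on $A$, yielding the desired absolute continuity.

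The step I expect to be the main obstacle is the identification of the pathwise derivative with the Malliavin derivative, together with the associated trace formula, in part (2): the joint dependence of $\sig_M$ on both $x$ and $z$ produces two contributions to $D_s\sig_M(x,z_u)$, one from $\sig'(z_u)D_sz_u$ and one from the cutoff factor, and the Green kernel $G(t,s)$ is only Lipschitz (rather than smooth) in $s$, which requires delicate bookkeeping of the Young regularity of all the terms appearing near the diagonal $s=u$ in the trace. Once this analytic core is in place, part (3) follows in a rather standard fashion from Malliavin-calculus tools.
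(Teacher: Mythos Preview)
Your outline for parts (1) and (2) is essentially the paper's strategy: mild formulation via the Green kernel $K$, a contraction argument in $\cac^\ka$ exploiting the cutoff to bound $\|x\|_\ga$ by $M+1$, then Fr\'echet differentiability of the solution map combined with an identification of the pathwise derivative with the Malliavin derivative (the paper invokes Kusuoka's criterion on $\ch$-differentiability rather than smooth approximations, but this is a minor variation). The Skorohod-plus-trace rewriting is also handled as you describe.

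Your treatment of part (3), however, contains a real gap, and your assessment that it ``follows in a rather standard fashion'' is the opposite of the truth: the paper singles this out as the most delicate section. The modification of the cutoff is \emph{not} designed so that $\si_M$ coincides with $\si$ on $\{|z_t|\ge a\}$. The issue is that the Malliavin derivative $\cd_s z_t$ solves a linear equation (your $Z^h$) whose inhomogeneous term $\Psi_s(t)$ contains, besides the favourable piece $\tilde G_M(B)\,\si(z_s)\,K(t,s)$, a contribution $2\varphi_M'(\cdot)\,\mu_s\,z_t$ coming from differentiating the cutoff factor $G_M$ in the $x$-direction. With the original norm $\|x\|_{\ga,p}$ in the cutoff, the function $\mu_s$ (see (\ref{eq:def-mu})) has no useful smallness near $s=0$, and this extra term can swamp the $\si_0$-lower bound. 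The paper's fix is to replace $\|x\|_{\ga,p}$ by a restricted double integral $U_{\ga,p}(x)$ (Proposition~\ref{prop:4.4}, a variant of Garsia's lemma) so that the corresponding $\tilde\mu_s$ is $O(s^\beta)$ with $\beta$ arbitrarily large (Lemma~\ref{lemma:mu}); this is what the ``slight modification'' actually buys.

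Even with this in hand, the positivity of $\|\cd z_t\|_\ch$ is not obtained by a Gronwall bound. Because there is no time filtration, the linear equation for $\cd_s z_t$ is global in $\xi\in[0,1]$ and a naive lower bound fails. The paper instead proves a sign result on \emph{increments}: it shows (Lemma~\ref{lem:upper-bnd-w}) that if the inhomogeneous term satisfies $\delta w^{s}_{t_1 t_2}\le -c_1|t_2-t_1|\,s$, then the solution inherits $\delta z^{s}_{t_1 t_2}\le -c\,|t_2-t_1|\,s$, and applies this to $\Psi_s(t)$ on a small interval near $0$ where $\tilde\mu_s$ is negligible. This forces $s\mapsto\cd_s z_t$ to be strictly monotone on a nontrivial interval, hence not identically zero. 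Your sketch does not anticipate either the need for the modified Garsia norm or this increment-based argument, and a direct Gronwall approach would not close.
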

The reader might wonder why we have made the assumption of a \emph{small} coefficient $\si$ here, through the assumption $\|\sig^{(j)}\|_{\infty}\leq \frac{c_j}{M+1}$. This is due to the fact that monotonicity methods, which are essential in the deterministic literature (see e.g. \cite{Ev}) as well as in the stochastic references quoted above, are ruled out here by the presence of the diffusion coefficient in front of the noise $\dot x$. We have thus focused on contraction type properties, which are also mentioned in \cite{NP}. Let us also say a word about possible generalizations to elliptic equations in dimension $d=2,3$: the main additional difficulty lies in the fact that the fundamental solution to the elliptic equation exhibits some singularities on the diagonal, which should be dealt with. In particular, if one wishes to handle the case of a general Hölder continuous signal $x$, rough paths arguments in higher dimensions should be used. This possibility goes far beyond the current article.

\smallskip

At a technical level, let us mention that the first part of Theorem  \ref{thm:intro} above relies on an appropriate formulation of the equation, which enables to quantify the increments of the candidate solution in a reasonable way, plus some classical contraction arguments. As far as the Malliavin differentiability of the solution is concerned, it hinges on rather standard methods (see~\cite{LT,NS}). However, our density result for $\cl(y_t)$ is rather delicate, for two main reasons:
\begin{itemize}
\item 
The lack of a real time direction or filtration in equation (\ref{eq:elliptic-intro2}) makes many usual lower bounds on the Malliavin derivatives rather clumsy.
\item
One has to take care of the derivatives of our cutoff function $\si_M$ with respect to the driving process, for which upper bounds are to be provided and compared to some leading terms in the Malliavin derivatives.
\end{itemize}
Solutions to these additional problems are given at Section \ref{fbm}, which can be seen as the most demanding part of our paper. It should also be pointed out that we are able to solve equation (\ref{eq:elliptic-intro2}) for any H\"older regularity of the driving noise $x$, while our stochastic analysis part is devoted to fBm with Hurst parameter $H>1/2$. This is only due to the fact that Malliavin calculus is much easier to handle in the latter situation, and we firmly believe that our results could be generalized to $H<1/2$.

\smallskip

Here is how our article is structured: our equation is defined and solved at Section~\ref{sec:existence-uniqueness}. Differentiation properties of its solution with respect to the driving process are investigated at Section~\ref{differentiability}. Finally, the Malliavin calculus aspects for fractional Brownian motion, including the existence of a density, are handled at Section~\ref{fbm}.

\smallskip

Unless otherwise stated, any constant $c$ or $C$ appearing in our computations below is understood as a generic constant which might change from line to line without further mention.

\section{Existence and uniqueness of solution}\label{sec:existence-uniqueness}
Recall that we wish to solve the one-dimensional second order differential equation~(\ref{1}). Towards this aim, we shall change a little its formulation thanks to some heuristic considerations, and introduce our localization coefficient $\si_M$. We will then be able to solve the equation thanks to a fixed point argument.

\subsection{Heuristic considerations}
\label{prel}

Assume for the moment that $x$ is a smooth function defined on $\ou$. Hence, if $\si$ is small and regular enough, it is easily shown (see \cite{NP} for similar arguments) that equation~(\ref{1}) can be solved thanks to contraction arguments.

\smallskip

It is also well-known in this case that equation (\ref{1}) can be understood in the mild sense. Specifically, let the kernel $K:\ou^2\to\ou$ be the fundamental solution of the linear elliptic equation with Dirichlet boundary conditions, and notice that this kernel is explicitly given by
\begin{equation}\label{eq:def-kernel}
K(t,\xi)=t\wedge\xi-t\xi, \quad t,\xi\in\ou.
\end{equation}
Then $\{z_t,\; t\in [0,1]\}$ solves (\ref{1}) if it satisfies the
integral equation
\beq
z_t=\int_0^1 K(t,\xi)\sig (z_\xi) dx_\xi,\quad t\in [0,1],
\label{2}
\eeq
where the integrals above are understood in the Riemann sense as soon as $x$ is continuously differentiable.

\smallskip

Still assuming that $x$ is continuously differentiable, let us retrieve some more information about the increments of the solution $y$ to our elliptic equation. In order to do so, set first
\begin{equation*}
\del f_{st}=f_t-f_s, \qquad 0\le s\le t \le 1,
\end{equation*}
for any continuous function $f$. Let us also give an expression for the increments of $K$, by noticing that this kernel can be differentiated with respect to its first variable. Indeed, one has
$$
\partial_u K(u,\xi)=\1_{\{u\leq \xi\}} -\xi \quad \Longrightarrow\quad
K(t,\xi)-K(s,\xi)=\int_s^t \lp \1_{\{u\leq \xi\}} -\xi \rp \, du.
$$
Then, thanks to an obvious application of Fubini's theorem,  the increments of $z$ can be written as
\begin{align*}
(\del z)_{st} & = \int_0^1 \left( \int_s^t (\1_{\{u\leq \xi\}} -\xi) du\right) \sig(z_\xi) dx_\xi \\
& = \int_s^t du \int_0^1  (\1_{\{u\leq \xi\}} -\xi) \sig(z_\xi) dx_\xi\\
& =  \int_s^t du \left(\int_u^1 \sig(z_\xi) dx_\xi\right) - (t-s)\int_0^1 \xi \sig(z_\xi) dx_\xi.
\end{align*}

\smallskip

The latter equation is the one which is amenable to generalization to a non-smooth setting, and we will thus interpret our elliptic system in this way: we say that a continuous function $z:\ou\to\R$ is a solution to (\ref{1}) if, for any $0\le s\le t \le 1$,
\beq
\del z_{st} = \int_s^t du \left(\int_u^1 \sig(z_\xi) dx_\xi\right) - (t-s)\int_0^1 \xi \sig(z_\xi) dx_\xi,
\label{3}
\eeq
where the integrals with respect to the driving noise $x$ are interpreted in the Young sense.

\subsection{H\"older spaces and cutoff}
\label{sec:cutoff}
Though it could be intuited from the original equation, our formulation (\ref{3}) of the elliptic system indicates clearly that the candidate solution should be $\kappa$-H\"older continuous for
any $\kappa<1$, independently of the smoothness of $x$.

\smallskip

More precisely, let $\cac^\gam$ be the space of continuous functions $f\in \cac([0,1])$ such that $\|f\|_\gam<+\infty$, where
$$\|f\|_\gam^2=\|f\|_\infty + \sup_{0\le s < t \le 1}\frac{|\del f_{st}|}{|t-s|^\gam},$$
and where we recall that $\del f_{st}=f_t-f_s$. We shall define the integrals in (\ref{3}) thanks to the following classical proposition (see \cite{young})):
\begin{proposition}
Let $f\in \cac^\gam$, $g\in \cac^\kappa$ with $\gam+\kappa>1$, and $0\le s\le t\le 1$. Then the  integral $\int_s^t g_\xi df_\xi$ is well-defined as limit of Riemann sums along partitions of $[s,t]$. Moreover, the following estimation is fulfilled:
\beq
\left|\int_s^t g_\xi df_\xi\right| \leq c_{\ga,\ka} \|f\|_\gam \|g\|_\kappa |t-s|^\gam,
\label{y}
\eeq
where the constant $c_{\ga,\ka}$ only depends on $\gam$ and $\kappa$. A sharper estimate is also available:
\begin{equation}\label{eq:ineq-young-sharp}
\left|\int_s^t g_\xi df_\xi\right| \leq |g_s| \, \|f\|_\gam |t-s|^\gam
+ c_{\ga,\ka}  \|f\|_\gam \|g\|_\kappa |t-s|^{\gam+\ka}.
\end{equation}

\end{proposition}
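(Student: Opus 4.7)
The plan is to follow the classical coarsening argument due to Young. Given a partition $\pi=\{s=t_0<t_1<\cdots<t_n=t\}$ of $[s,t]$, associate to it the Riemann--Stieltjes sum
$$J_\pi=\sum_{i=0}^{n-1} g_{t_i}(f_{t_{i+1}}-f_{t_i}).$$
I would first establish a uniform estimate comparing $J_\pi$ with the trivial sum $g_s(f_t-f_s)$, and then use it to show that the $J_\pi$ form a Cauchy net as the mesh of $\pi$ tends to zero.

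The engine of the argument is a one-point removal estimate: writing $\pi^{(k)}=\pi\setminus\{t_k\}$ for some $1\le k\le n-1$, a short telescoping computation yields $J_\pi-J_{\pi^{(k)}}=(g_{t_k}-g_{t_{k-1}})(f_{t_{k+1}}-f_{t_k})$, hence
$$|J_\pi-J_{\pi^{(k)}}|\le \|g\|_\kappa\|f\|_\gam(t_{k+1}-t_{k-1})^{\gam+\kappa}.$$
A pigeonhole observation then guarantees, for any $n\ge 2$, the existence of an index $k$ with $t_{k+1}-t_{k-1}\le 2(t-s)/(n-1)$, since those quantities sum to at most $2(t-s)$. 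Iteratively removing such a point and ending at the trivial partition $\{s,t\}$ (for which $J=g_s(f_t-f_s)$), the accumulated error is bounded by
$$\|f\|_\gam\|g\|_\kappa\sum_{m=2}^{n}\left(\frac{2(t-s)}{m-1}\right)^{\gam+\kappa}\le c_{\ga,\ka}\|f\|_\gam\|g\|_\kappa(t-s)^{\gam+\kappa},$$
where convergence of the series $\sum m^{-(\gam+\kappa)}$ is precisely where the hypothesis $\gam+\kappa>1$ is used. This yields the fundamental inequality
$$|J_\pi-g_s(f_t-f_s)|\le c_{\ga,\ka}\|f\|_\gam\|g\|_\kappa(t-s)^{\gam+\kappa}$$
for every partition $\pi$ of $[s,t]$.

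Applying this inequality subinterval by subinterval to any refinement $\pi'\supset\pi$ gives
$$|J_{\pi'}-J_\pi|\le c_{\ga,\ka}\|f\|_\gam\|g\|_\kappa\sum_i(t_{i+1}-t_i)^{\gam+\kappa}\le c_{\ga,\ka}\|f\|_\gam\|g\|_\kappa|\pi|^{\gam+\kappa-1}(t-s),$$
which tends to zero as $|\pi|\to 0$. Comparing two arbitrary partitions with small mesh to their common refinement therefore delivers the Cauchy property; the limit $\int_s^t g_\xi\,df_\xi$ is then well defined, and passing to the limit in the fundamental inequality yields the sharp bound~(\ref{eq:ineq-young-sharp}). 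The coarser estimate~(\ref{y}) is deduced from~(\ref{eq:ineq-young-sharp}) by bounding $|g_s|\le\|g\|_\infty\le\|g\|_\kappa$ and using $(t-s)^{\gam+\kappa}\le(t-s)^\gam$ since $t-s\le 1$.

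I do not anticipate a serious obstacle: this is Young's original 1936 scheme, and each step is purely combinatorial--analytic once $\gam+\kappa>1$ is available. The only point that requires a little care is ensuring that the constant $c_{\ga,\ka}$ coming out of the coarsening is uniform in the initial partition $\pi$, but this is immediate from the absolute convergence of $\sum_m m^{-(\gam+\kappa)}$, whose sum depends only on $\gam$ and $\kappa$.
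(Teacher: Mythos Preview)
Your argument is correct and is precisely Young's classical coarsening scheme. The paper does not supply its own proof of this proposition: it simply states the result and refers to \cite{young}, so there is nothing to compare against beyond noting that you have reproduced exactly the argument the paper cites.
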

The following straightforward property will also be used in the sequel: if $f,g\in \cac^\gam$, then the product $fg$ defines an element in $\cac^\gam$ such that
$\|fg\|_\gam\leq \|f\|_\gam \|g\|_\gam$.

\begin{remark}
It might be clear to the reader that the solution to our elliptic system will live in fact in a space of Lipschitz functions. We have chosen here to work in the Young setting because this does not induce any additional difficulty, and is more likely to be generalized to higher dimensions of the parameter $t$.
\end{remark}

\smallskip

The following Fubini type theorem for Young integrals is a slight modification of \cite[Proposition 2.6]{LT} and shall be needed in the sequel:
\begin{proposition}\label{prop:fubini-young}
Consider $\ga_i,\la_i \in(0,1)$, $i=1,2$, such that $\ga_i+\la_j>1$ for all $i,j=1,2$. Let $g\in \cac^{\ga_1}$ and $f\in \cac^{\ga_2}$, 
and $h:\{(t,s)\in [0,1]^2; 0\le s\le t\le 1\} \rightarrow\R$ a function such that $h(\cdot,t)$ (resp. $h(t,\cdot)$)
belongs to $\cac^{\la_1} ([t,1])$ (resp. $\cac^{\la_2}([0,t])$) uniformly in $t\in[0,1]$, and
\begin{equation}\label{eq:uhc}
\| h(r_1,\cdot)-h(r_2,\cdot)\|_{\la_2} \le C |r_1-r_2|^{\ga_1} \qquad \| h(\cdot,u_1)-h(\cdot,u_2)\|_{\la_1} \le C |u_1-u_2|^{\ga_2}.
\end{equation}
Then 
\begin{equation}\label{eq:ft}
\int_s^t\int_s^r h(r,u) \, dg_udf_r=
\int_s^t\int_u^t h(r,u) \, df_r dg_u,\quad 0\le s\le t\le T,
\end{equation}
and 
\begin{equation*}
\int_s^t\int_u^1 h(r,u) \, df_r dg_u =
\int_s^1 \int_s^{t\land r} h(r,u) \, dg_u df_r,\quad 0\le s\le t\le T.
\end{equation*}
\end{proposition}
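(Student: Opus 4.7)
My plan is to prove both identities by constructing a common discrete approximation on $[s,t]$ and showing that each iterated integral is its limit as the partition mesh tends to zero. Fix a partition $\pi: s = s_0 < s_1 < \cdots < s_N = t$ of $[s,t]$ and introduce the double Riemann sum
$$
S_\pi := \sum_{i=1}^{N-1} \sum_{j=0}^{i-1} h(s_i, s_j)\,(\del g)_{s_j s_{j+1}}\,(\del f)_{s_i s_{i+1}}.
$$

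For the first identity in (\ref{eq:ft}), I would group $S_\pi$ in two ways. Grouping first over the outer index $i$, the inner sum $H_\pi(s_i) := \sum_{j=0}^{i-1} h(s_i, s_j)(\del g)_{s_j s_{j+1}}$ approximates $H(s_i) := \int_s^{s_i} h(s_i, u)\, dg_u$, thanks to the sharp Young estimate (\ref{eq:ineq-young-sharp}) applied to the functions $h(s_i, \cdot)\in\cac^{\la_2}$ and $g\in\cac^{\ga_1}$; since $\la_2+\ga_1>1$, the cumulative error vanishes as $|\pi|\to 0$, uniformly in $i$. The outer sum $\sum_i H(s_i)(\del f)_{s_i s_{i+1}}$ then converges to $\int_s^t H(r)\, df_r$ by another application of Young's theorem, where the required H\"older regularity of $r\mapsto H(r)$ is obtained by splitting $H(r_2)-H(r_1)$ into $\int_{r_1}^{r_2} h(r_2,u)\,dg_u$ (handled by the sharp Young bound) and $\int_s^{r_1}(h(r_2,u)-h(r_1,u))\,dg_u$ (handled via the second estimate in (\ref{eq:uhc})). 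Grouping $S_\pi$ instead over the inner index $j$ gives $S_\pi=\sum_{j=0}^{N-2} F_\pi(s_j)(\del g)_{s_j s_{j+1}}$ with $F_\pi(s_j)=\sum_{i=j+1}^{N-1} h(s_i, s_j)(\del f)_{s_i s_{i+1}}$; the mirror argument, with the roles of $(g,\ga_1,\la_2)$ and $(f,\ga_2,\la_1)$ exchanged, shows $S_\pi\to\int_s^t\int_u^t h(r,u)\,df_r\,dg_u$, and the first identity follows from uniqueness of the limit.

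For the second identity, I would decompose the integration region $\{(r,u):s\le u\le t,\ u\le r\le 1\}$ into the triangle $T=\{s\le u\le r\le t\}$ plus the rectangle $R=[t,1]\times [s,t]$; both orientations of the iterated integral respect this decomposition. On $T$, the first identity just established furnishes the required exchange. On $R$, an analogous but strictly simpler double Riemann sum argument, without the diagonal constraint $j<i$, proves the rectangular Fubini identity $\int_t^1\int_s^t h(r,u)\,dg_u\,df_r = \int_s^t\int_t^1 h(r,u)\,df_r\,dg_u$. Summing the triangular and rectangular contributions then yields the second identity, because the outer integral $\int_s^1 \cdots df_r$ on the right of (\ref{eq:ft}) splits precisely as $\int_s^t$ (contributing $T$) plus $\int_t^1$ (contributing $R$).

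The main technical obstacle will be the careful bookkeeping of the errors in the double approximation. The sharp Young bound (\ref{eq:ineq-young-sharp}) provides the crucial extra H\"older exponent beyond the naive estimate (\ref{y}), so that each inner approximation error carries a factor of order $|\pi|^{\la_2+\ga_1}$ rather than $|\pi|^{\ga_1}$, making the cumulative error summable over the outer partition; the symmetric treatment for the second grouping relies on $\la_1+\ga_2>1$. The uniform H\"older assumption (\ref{eq:uhc}) is essential not only for regularity of $H$ and $F$ in the outer variable, but also for controlling joint variations of $h$ across partition intervals (terms such as $h(s_i,s_j)-h(s_i,u)$ for $u\in[s_j,s_{j+1}]$). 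The four conditions $\ga_i+\la_j>1$ are precisely what is needed for every approximation error in the two-step telescoping to vanish as $|\pi|\to 0$.
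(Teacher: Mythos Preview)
The paper does not actually prove this proposition: it is stated without proof as ``a slight modification of \cite[Proposition 2.6]{LT}''. Your approach via a common double Riemann sum, shown to converge to each iterated integral by two different groupings, is precisely the standard method for establishing Fubini identities for Young integrals, and is essentially what one finds in the cited reference. The reduction of the second identity to the first via the triangle/rectangle decomposition is also the natural route.

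One point deserves care in the write-up. When you show that $r\mapsto H(r)=\int_s^r h(r,u)\,dg_u$ is H\"older so that the outer integral $\int_s^t H(r)\,df_r$ is a well-defined Young integral, the estimate you sketch (using the first inequality in \eqref{eq:uhc} together with the sharp bound \eqref{eq:ineq-young-sharp}) gives $H\in\cac^{\min(\ga_1,\la_1)}$, and the hypotheses only guarantee $\la_1+\ga_2>1$, not $\ga_1+\ga_2>1$. In the applications in this paper (where $f$ or $g$ is Lipschitz, i.e.\ one of the $\ga_i$ equals $1$) this causes no difficulty, but strictly for the proposition as stated you should either note that the exponents in \eqref{eq:uhc} are to be read so that the resulting regularity of $H$ matches $\la_1$ (as seems intended), or add the harmless extra assumption $\ga_1+\ga_2>1$. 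The same symmetric remark applies to $u\mapsto F(u)$ in the second grouping. Apart from this bookkeeping issue, your argument is sound.
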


\smallskip

Let us describe now our cutoff procedure on the coefficient $\si$. Recall that we wish to produce a smooth function $\si_M:\R\times\cac^\ga\to\R$ such that $\si_M(\cdot,x)\equiv 0$ whenever $\|x\|_{\ga}\ge M+1$. This also means that the H\"older norm of $x$ should enter into the picture in a smooth manner. To this purpose, let us consider the Sobolev type norm
\begin{equation*}
\|f\|_{\gam,p}:=
\left( \int_0^1\int_0^1 \frac{(f(\zeta)-f(\eta))^{2p}}{|\zeta-\eta|^{2p\gam+2}} \, d\zeta d\eta \right)^{\frac{1}{2p}},
\quad\mbox{for } p\ge 1.
\end{equation*}

It will be seen below that $\|f\|_{\gam,p}^{2p}$ can be differentiated with respect to $f$ in a suitable sense. Furthermore, Garsia's lemma (see e.g. \cite[Lemma 1]{Ga}) assesses that, whenever $2p \gam >1$, we have $\|f\|_\gam \leq C \|f\|_{\gam,p}$. Otherwise stated, we have the following:
\begin{remark}\label{fita}
Let $\gam\in (0,1)$. Assume that $\ep>0$ and $p\geq 1$ satisfy $\ep>\frac{1}{2p}$. Then:
$$f\in \cac^{\gam+\ep} \Longrightarrow \|f\|_{\gam,p}<\infty.$$
\end{remark}

\smallskip

This being said, our local coefficient is built in the following manner: let $M>0$ be an arbitrary strictly positive number. We introduce a smooth cutoff function $\ffi_M$ satisfying:
\begin{definition}\label{hyp:phi}
We consider a function $\ffi_M\in \cac^\infty_b((0,\infty))$ such that $\ffi_M(r)=0$
for all $r>M+1$, and $\ffi_M(r)=1$ for $r<M$.
For any $x:[0,1]\rightarrow \re$ for which $\|x\|_{\gam,p}<\infty$, for
some $\gam\in (0,1)$ and $p\geq 1$, set \beq G_M(x):=\ffi_M(\|x\|_{\gam,p}^{2p}). \label{35} \eeq Eventually, for such
$x$ and any $y\in \re$, we define \beq \sig_M(x,y):= G_M(x) \sig(y). \label{36} \eeq Hence, in particular,
$\sig_M(x,y)=0$ whenever $\|x\|_{\gam,p}^{2p} \geq M+1$. 
\end{definition}
We shall consider now the modified elliptic integral equation:
\beq
\del z_{st} = \int_s^t du \left(\int_u^1 \sig_M(x,z_\xi) dx_\xi\right) - (t-s)\int_0^1 \xi \sig_M(x,z_\xi) dx_\xi, \quad 0\le s\le t\le 1.
\label{4}
\eeq
That is, we will solve Equation (\ref{3}) for any control $x\in \cac^\gam$ such that $\|x\|_{\gam,p}^{2p}<M$. Notice in particular that the solution $z$ to (\ref{4}) depends on $M$, though we have avoided most of the explicit references to this fact for notational sake.

\subsection{Fixed point argument}
\label{existence}

After the preliminary considerations of Sections \ref{prel} and~\ref{sec:cutoff}, we now consider a driving signal $x$ in a H\"older space $\cac^\ga$, and we will  seek for a unique solution to equation (\ref{4}) in $\cac^{\ka}$ with $1-\ga<\ka<1$.

\smallskip

As it will be illustrated in the proof of Theorem \ref{t1}, we will need some regularity properties of $\sig$ when
considered as a map defined on $\cac^\kappa$ with values into itself. More precisely, we will make use of the following
result:

\begin{lemma}\label{l1}
Suppose that $\sig:\re\rightarrow \re$ is a bounded function that belongs to $\cac^2(\re)$ and has bounded derivatives. Then, for any $\kappa\in (0,1)$, $\sig:\cac^\kappa\rightarrow \cac^\kappa$ satisfies the following properties: for all $y,z\in \cac^\kappa$,
\[
\|\sig(y)\|_\kappa \leq \|\sig'\|_\infty \|y\|_\kappa + \|\sig\|_\infty,
\]
\[
\|\sig(y)-\sig(z)\|_\kappa \leq C \|y-z\|_\kappa \left\{ \|\sig'\|_\infty + \|\sig''\|_\infty (\|y\|_\kappa + \|y-z\|_\kappa)\right\}.
\]

\end{lemma}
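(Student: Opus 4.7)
The plan is to prove both inequalities by elementary Hölder norm manipulations based on the mean value theorem in integral form, splitting $\|\cdot\|_\kappa$ into its $L^\infty$ and Hölder seminorm parts.

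For the first inequality, I would observe that $\|\sig(y)\|_\infty\le\|\sig\|_\infty$ because $\sig$ is bounded, and that for any $0\le s<t\le 1$ the mean value theorem yields $|\sig(y_t)-\sig(y_s)|\le\|\sig'\|_\infty |\del y_{st}|\le \|\sig'\|_\infty\|y\|_\kappa |t-s|^\kappa$. Dividing by $|t-s|^\kappa$, taking the sup and adding the uniform bound gives exactly the stated inequality.

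For the second (Lipschitz-type) inequality, the key trick is the integral representation
\[
\sig(y_t)-\sig(z_t) \;=\; (y_t-z_t)\,A_t, \qquad A_t:=\int_0^1 \sig'\bigl((1-r)z_t+r y_t\bigr)\,dr,
\]
which reduces the problem to estimating the Hölder norm of a product. One has $\|A\|_\infty\le \|\sig'\|_\infty$ trivially, and using boundedness of $\sig''$,
\[
|A_t-A_s|\;\le\; \|\sig''\|_\infty\int_0^1 \bigl|\del z_{st}+r\,\del(y-z)_{st}\bigr|\,dr
\;\le\; \|\sig''\|_\infty\bigl(\|z\|_\kappa+\tfrac{1}{2}\|y-z\|_\kappa\bigr)|t-s|^\kappa.
\]
Using the triangle inequality $\|z\|_\kappa\le\|y\|_\kappa+\|y-z\|_\kappa$, this gives $\|A\|_\kappa\le C\bigl(\|\sig'\|_\infty+\|\sig''\|_\infty(\|y\|_\kappa+\|y-z\|_\kappa)\bigr)$ for some universal constant $C$.

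It then remains to combine these estimates with the product rule
\[
\del[\sig(y)-\sig(z)]_{st} \;=\; A_t\,\del(y-z)_{st}+(y_s-z_s)\,\del A_{st},
\]
from which $|\del[\sig(y)-\sig(z)]_{st}|\le \bigl(\|A\|_\infty\|y-z\|_\kappa + \|A\|_\kappa\cdot\|y-z\|_\infty\bigr)|t-s|^\kappa$; together with the sup-norm bound $\|\sig(y)-\sig(z)\|_\infty\le \|\sig'\|_\infty\|y-z\|_\infty\le \|\sig'\|_\infty\|y-z\|_\kappa$, we assemble the result. I do not expect any genuine obstacle here—the only mild point requiring care is the control of $\|A\|_\kappa$, which is handled cleanly by the integral representation above.
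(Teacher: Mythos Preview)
Your proof is correct. The first inequality is handled exactly as in the paper. For the second, you take a different but equally valid route: you write $\sigma(y_t)-\sigma(z_t)=(y_t-z_t)A_t$ via a one-parameter interpolation and then estimate the H\"older norm of the product through the Leibniz-type splitting $\delta[(y-z)A]_{st}=A_t\,\delta(y-z)_{st}+(y_s-z_s)\,\delta A_{st}$. The paper instead introduces a \emph{two}-parameter interpolation path
\[
a(\lambda,\mu)=y_s+\lambda(z_s-y_s)+\mu(y_t-y_s)+\lambda\mu\,\delta(z-y)_{st},
\]
with $a(0,0)=y_s$, $a(0,1)=y_t$, $a(1,0)=z_s$, $a(1,1)=z_t$, and represents the full four-point increment directly as $\delta(\sigma(y)-\sigma(z))_{st}=\int_0^1\!\int_0^1\partial_\lambda\partial_\mu\,\sigma(a(\lambda,\mu))\,d\lambda\,d\mu$, from which the $\sigma'$ and $\sigma''$ contributions drop out in one stroke. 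Both arguments are standard and yield the same bound; your product-rule approach is arguably more transparent and reuses the factor estimates cleanly, while the paper's double-integral trick packages the second difference into a single formula without an intermediate product decomposition.
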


\begin{proof}
The first part in the statement is an immediate consequence of the fact that $\sig$ and $\sig'$ are bounded
functions.

\smallskip

For the second part, let us fix $s,t\in [0,1]$ and $y,z\in \cac^\kappa$, so that we need to analyze the increment
$$\del(\sig(y)-\sig(z))_{st}= \sig(y_t)-\sig(z_t)-\sig(y_s)+\sig(z_s).$$
To this aim, let us consider the following path:
for any $\lam, \mu\in [0,1]$, set
$$a(\lam,\mu)=y_s + \lam(z_s-y_s) + \mu(y_t-y_s) +\lam \mu (y_s-y_t-z_s+z_t).$$
Notice that, in particular, $a(0,0)=y_s$, $a(0,1)=y_t$, $a(1,0)=z_s$ and $a(1,1)=z_t$. Then, we can write
\begin{align}
\del(\sig(y)-\sig(z))_{st} & = \int_0^1 d\lam \int_0^1 d\mu \, \partial_\lam \partial_\mu \sig(a(\lam,\mu))\nonumber \\
& = \int_0^1 d\lam \int_0^1 d\mu \left[ \sig'(a(\lam,\mu)) \partial_\lam \partial_\mu a (\lam,\mu) + \sig''(a(\lam,\mu)) \partial_\lam a(\lam,\mu)
\partial_\mu a(\lam,\mu)\right].
\label{5}
\end{align}
On the other hand, we have the following estimates:
$$|\partial_\lam \partial_\mu a (\lam,\mu)| \leq \|y-z\|_\kappa |t-s|^\kappa,$$
$$| \partial_\lam a(\lam,\mu) \partial_\mu a(\lam,\mu)| \leq C \|y-z\|_\kappa (\|y\|_\kappa + \|y-z\|_\kappa)|t-s|^\kappa.$$
Using these bounds and expression (\ref{5}), we end up with
$$|\del(\sig(y)-\sig(z))_{st}| \leq C \|y-z\|_\kappa \left\{ \|\sig'\|_\infty + \|\sig''\|_\infty (\|y\|_\kappa + \|y-z\|_\kappa)\right\} |t-s|^\kappa.$$
Therefore, we conclude the proof. 

\end{proof}

\smallskip

We are now in position to state the following existence and uniqueness result for Equation~(\ref{4}):

\begin{theorem}\label{t1}
Let $\gam,\kappa\in (0,1)$ be such that $\gam+\kappa>1$. Assume that $\ep>0$ and $p\geq 1$ satisfy $\ep>\frac{1}{2p}$
and let $x\in \cac^{\gam+\ep}$. Suppose that $\sig:\re\rightarrow \re$ is
bounded, belongs to $\cac^2(\re)$ and has bounded derivatives. Suppose also that the derivatives of $\sig$ satisfy the
following condition:
\begin{equation}
\|\sig^{(j)}\|_{\infty}\leq \frac{c_1}{M+1},\qquad j=0,1,2,
\label{sigma2}
\end{equation}
for a small enough constant $c_1<1$. Then, there exists a unique solution of Equation (\ref{4}) in $\cac^\kappa$.
Moreover, it holds that
\beq
\|z\|_\ka \leq C(M),
\label{eq:bound-z}
\eeq
where $C(M)$ is a positive constant depending on $M$.
\end{theorem}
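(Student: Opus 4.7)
My plan is to solve (\ref{4}) via a Banach fixed-point argument on a closed ball of $\cac^\ka$. Define the map $\Gamma\colon \cac^\ka \to \cac^\ka$ by
$$\Gamma(z)_t := \int_0^t du \int_u^1 \sig_M(x,z_\xi)\, dx_\xi \;-\; t\int_0^1 \xi\, \sig_M(x,z_\xi)\, dx_\xi,$$
which is the right-hand side of (\ref{4}) specialised to $s=0$. Because each summand is affine in $t$, a fixed point of $\Gamma$ automatically fulfils (\ref{4}) for every $0\le s\le t\le 1$, and one easily verifies $\Gamma(z)_0=\Gamma(z)_1=0$ using Fubini. The Young integrals are meaningful since $\sig_M(x,\cdot)\circ z\in\cac^\ka$ whenever $z\in\cac^\ka$ (apply Lemma~\ref{l1} to $\sig$, multiplied by the scalar $G_M(x)$), and $\ga+\ka>1$ by assumption.

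Three ingredients drive the two main estimates: the sharp Young inequality (\ref{eq:ineq-young-sharp}), the regularity bounds of Lemma~\ref{l1}, and the control $\|x\|_\ga\, G_M(x)\le C(M+1)^{1/(2p)}$ obtained by combining the support condition $\|x\|_{\ga,p}^{2p}\le M+1$ with Garsia's inequality (Remark~\ref{fita}). Together with~(\ref{sigma2}), any factor of type $\|x\|_\ga\,\|\sig^{(j)}\|_\infty\, G_M(x)$ is thus bounded by $C c_1 (M+1)^{1/(2p)-1}$, which is small uniformly in $M$ once $c_1$ is small, since $\tfrac{1}{2p}-1<0$. Applying (\ref{eq:ineq-young-sharp}) to both terms of $\del\Gamma(z)_{st}$ (noting that the integrand $\xi\,\sig_M(x,z_\xi)$ vanishes at $\xi=0$, so the boundary term of (\ref{eq:ineq-young-sharp}) disappears in the second integral) and using the first bound of Lemma~\ref{l1} yields, for $\|z\|_\ka\le R$,
$$\|\Gamma(z)\|_\ka \;\le\; C\, c_1\, (M+1)^{\frac{1}{2p}-1}\,(R+1).$$
For $c_1$ sufficiently small the right-hand side is $\le R$ for any fixed $R\ge 1$, so $\Gamma$ maps $B_R:=\{z\in\cac^\ka\colon \|z\|_\ka\le R\}$ into itself.

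For the contraction step I apply the same Young estimate to $\Gamma(y)-\Gamma(z)$, invoking the second (Lipschitz) bound of Lemma~\ref{l1} together with $y,z\in B_R$, to obtain
$$\|\Gamma(y)-\Gamma(z)\|_\ka \;\le\; C\, c_1\, (M+1)^{\frac{1}{2p}-1}\,(1+R)\,\|y-z\|_\ka,$$
which is strictly contractive for the same choice of $c_1$. Banach's theorem then produces a unique fixed point $z\in B_R$ with $\|z\|_\ka\le R=:C(M)$, establishing (\ref{eq:bound-z}). The main obstacle I anticipate is the careful bookkeeping of the $\|x\|_\ga$-factors produced by each Young integral and matching them against the $1/(M+1)$-smallness of $\|\sig^{(j)}\|_\infty$ so that the resulting contraction constant is uniformly small in $M$; a secondary point is that uniqueness in the whole of $\cac^\ka$, rather than only within $B_R$, should follow from the same a priori estimate applied directly to any candidate solution, which forces it into $B_R$.
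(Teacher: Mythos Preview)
Your proposal is correct and follows essentially the same fixed-point strategy as the paper: define $\Gamma$, show it maps a ball of $\cac^\ka$ into itself via the Young estimate combined with the first bound of Lemma~\ref{l1}, then obtain the contraction via the second bound of Lemma~\ref{l1}. Two minor remarks: (i) the paper uses the cruder inequality $G_M(x)\|x\|_\ga\le C\,M$ rather than your sharper $C(M+1)^{1/(2p)}$, so the basic Young bound (\ref{y}) already suffices and the sharp version (\ref{eq:ineq-young-sharp}) is unnecessary; (ii) your final remark about global uniqueness is well taken---the paper states uniqueness in $\cac^\ka$ but only proves it in the ball, and your proposed a~priori estimate (any solution satisfies $\|z\|_\ka\le Cc_1(\|z\|_\ka+1)$, forcing $z\in B_R$) is exactly the right way to close that gap; indeed the paper carries out precisely this computation at the end to obtain (\ref{eq:bound-z}), without making the connection to uniqueness explicit.
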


\begin{proof}
As mentioned above, we will apply a fixed-point argument. Let us thus consider the following map on $\cac^\kappa$: for any $z\in \cac^\kappa$, $\Gam(z)$ is the element of $\cac([0,1])$ given by
$$\Gam(z)_t = \int_0^t du \left(\int_u^1 \sig_M(x,z_\xi) dx_\xi\right) - t \int_0^1 \xi \, \sig_M(x,z_\xi) dx_\xi,\quad t\in [0,1].$$
Owing to Lemma \ref{l1} and the definition of $\sig_M$, one easily proves that, for all $z\in \cac^\kappa$, $\Gam(z)$ is well-defined and belongs to $\cac^\kappa$.
We aim to prove that  $\Gam:\cac^\kappa\rightarrow \cac^\kappa$  has a unique fixed point. For this, we will find an invariant ball in $\cac^\kappa$ under $\Gam$ and check that $\Gam$, restricted to that ball, defines a contraction.

\smallskip

To begin with, let us fix a real number $K>1$ and consider the following closed ball in the H\"older space $\cac^\kappa$:
$$\cb_K :=\{z\in \cac^\kappa,\; \|z\|_\kappa \leq K\}.$$
Next, for $z\in \cb_K$, we are going to analyze the norm $\|\Gam(z)\|_\kappa$. Indeed, for any $s,t\in [0,1]$, $s<t$, we have that
\begin{align*}
|\del(\Gam(z))_{st}| & \leq \int_s^t du \left| \int_u^1 \sig_M(x,z_\xi) dx_\xi \right| + |t-s| \left| \int_0^1 \xi \, \sig_M(x,z_\xi) dx_\xi\right| \\
& \leq C_1 G_M(x) \|x\|_\gam \|\sig(z)\|_\kappa |t-s| \\
& \leq C_1 G_M(x) \|x\|_\gam (\|\sig'\|_\infty \|z\|_\kappa + \|\sig\|_\infty) |t-s|,
\end{align*}
where in the last inequality we have applied Lemma \ref{l1} and $C_1$ denotes a positive constant. Furthermore, 
the above estimate let us also infer that
\[
 \|\Gam(z)\|_\infty \leq C_1 G_M(x) \|x\|_\gam (\|\sig'\|_\infty \|z\|_\kappa + \|\sig\|_\infty).
\]
Hence,
\beq
\|\Gam(z)\|_\kappa \leq C_1 G_M(x) \|x\|_\gam (\|\sig'\|_\infty \|z\|_\kappa + \|\sig\|_\infty).
\label{eq:exis-gam}
\eeq
Since $z\in \cb_K$ and $G_M(x) \|x\|_\gam< M$, we get
$$\|\Gam(z)\|_\kappa  \leq C_1 M ( K \|\sig'\|_\infty + \|\sig\|_\infty).
\le C_1 \, c_1 (K+1),
$$
thanks to (\ref{sigma2}).
Moreover, recall that we have chosen a constant $K>1$. Therefore, by the hypothesis on $\sig$, if we take for instance $c_1<(2 C_1)^{-1}$, we obtain $\|\Gam(z)\|_\kappa  \leq (K+1)/2$, and thus
$$\|\Gam(z)\|_\kappa \leq K \quad \text{whenever} \quad \|z\|_\kappa \leq K.
$$
This implies that $\cb_K$ is invariant under $\Gam$.

\smallskip

Let us now prove that $\Gam_{|_{\cb_K}} :\cb_K\rightarrow \cb_K$
is a contraction. For this, it suffices to show that $\Gam_{|_{\cb_K}}$ is Lipschitz with a Lipschitz constant smaller than $1$. Namely, we shall prove the existence of a constant $L<1$ such that, for all $y,z\in \cb_K$,
$$\|\Gam(y)-\Gam(z)\|_\kappa \leq L \|y-z\|_\kappa.$$
Let $s,t\in [0,1]$, $s<t$, and $y,z\in \cb_K$. Then,
\begin{align}
\del(\Gam(y)-\Gam(z))_{st} & = \int_s^t du \left( \int_u^1 [\sig_M(x,y_\xi)-\sig_M(x,z_\xi)] dx_\xi \right)  \nonumber \\
&\qquad -(t-s) \int_0^1 \xi \, [\sig_M(x,y_\xi)-\sig_M(x,z_\xi)] dx_\xi.
\label{6}
\end{align}
By Lemma \ref{l1} and the properties of the Young integral, it turns out that the absolute value of both terms on the right-hand side of (\ref{6})
can be bounded, up to some positive constant, by
$$G_M(x) \|x\|_\gam \|y-z\|_\kappa \left\{ \|\sig'\|_\infty + \|\sig''\|_\infty(\|y\|_\kappa + \|y-z\|_\kappa) \right\} |t-s|.$$
We have a similar bound for $\|\Gam(y)-\Gam(z)\|_\infty$ as well. Thus, because $y,z\in \cb_K$, we eventually end up with
$$\|\Gam(y)-\Gam(z)\|_\kappa \leq C_2 M K  ( \|\sig'\|_\infty + \|\sig''\|_\infty ) \|y-z\|_\kappa.$$
It suffices now to consider that $\|\sig'\|_\infty$ and $\|\sig''\|_\infty$ are sufficiently small (that is
we can take $c_1<(C_2 K+1)^{-1}\land (2 C_1)^{-1}$, where $C_1$ is the constant of the first part of the proof)
so that the right-hand side above is bounded by $L  \|y-z\|_\kappa$, with $L<1$.
Therefore, $\Gam$ has a unique fixed point in $\cb_K$, which means that  Equation
(\ref{4}) has a unique solution in $\cac^\kappa$. 

\smallskip

Eventually, using (\ref{eq:exis-gam}) one proves that
\[
 \|z\|_\kappa \leq C_1 M (\|\sig'\|_\infty \|z\|_\kappa + \|\sig\|_\infty).
\]
In addition, invoking (\ref{sigma2}) and the fact that $c_1\leq (2C_1)^{-1}$, we obtain
\[
 \|z\|_\kappa\leq \frac{C_1 M \|\sig\|_\infty}{1-C_1 M \|\sig'\|_\infty}\leq C(M),
\]
which concludes the proof.

\end{proof}

\begin{remark}\label{rmk:compact-eq}
Having been able to solve equation (\ref{4}) in $\cac^\kappa$ for any $\ka<1$, one can now apply the Fubini type Proposition \ref{prop:fubini-young} in order to assess that $z$ is the unique solution to the integral equation
\begin{equation*}
z_t=\iou K(t,\xi) \si_M(x,z_\xi) \, dx_\xi, \quad t\in\ou,
\end{equation*}
where we recall that the kernel $K(t,\xi)$ is defined by $K(t,\xi)=t\wedge\xi-t\xi$.
\end{remark}

\section{Differentiability of the solution with respect to the control}
\label{differentiability}

This section is devoted to show that the solution of Equation (\ref{4}) is differentiable, in the sense of Fr\'echet, when considered as a function of the control $x$
driving the equation. For this, we need two auxiliary results.

\smallskip

Let us remind that the diffusion coefficient under consideration (see Equation (\ref{4})) is  introduced in our Definition \ref{hyp:phi}. Furthermore, the following differentiation rule holds true:
\begin{proposition}\label{prop1}
Let $\gam,\kappa\in (0,1)$ be such that $\gam+\kappa>1$, $p\geq 1$ and $\ep>\frac{1}{2p}$. Assume that $\sig\in
\cac^4(\re)$ is bounded together with all its derivatives and let $\sig_M$ be given by Definition~\ref{hyp:phi}. Consider $x$
an element of $\cac^{\gam+\ep}$ and define the following map:
$$F:\cac^{\gam+\ep}\times \cac^\kappa\longrightarrow \cac^\kappa,$$
where, for all $h\in\cac^{\gam+\ep}$ and $z\in \cac^\kappa$,
$$F(h,z)_t:=z_t-\int_0^t du \left( \int_u^1 \sig_M(x+h,z_\xi) d(x+h)_\xi\right)
+t \int_0^1 \xi \, \sig_M(x+h,z_\xi) d(x+h)_\xi.$$

Then, the map $F$ is Fr\'echet differentiable with respect to the first and second variable
and the Fr\'echet derivatives are given by, respectively: for all $t\in [0,1]$, $k\in
\cac^{\gam+\ep}$ and $g\in \cac^\kappa$,
\begin{align}\label{7}
&(D_1F(h,z)\cdot k)_t \\
&=   -\int_0^t du \left[ \int_u^1 \sig_M(x+h,z_\xi)\, dk_\xi +
\int_u^1(D G_M(x+h)\cdot
k) \,\sig(z_\xi)\, d(x+h)_\xi\right]  \notag \\
& + t \left[\int_0^1 \xi\; \sig_M(x+h,z_\xi)\, dk_\xi + \int_0^1 \xi\, (D G_M(x+h)\cdot k)\, \sig(z_\xi)\, d(x+h)_\xi\right], \notag
\end{align}
and
\begin{multline}
(D_2F(h,z)\cdot g)_t =  \; g_t -\int_0^t du \left[ \int_u^1 G_M(x+h)\, \sig'(z_\xi)\,
g_\xi\, d(x+h)_\xi \right]  \\
+ t \int_0^1 \xi\, G_M(x+h)\, \sig'(z_\xi) \,g_\xi \,d(x+h)_\xi \, .
\label{8}
\end{multline}
\end{proposition}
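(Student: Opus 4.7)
The strategy is to decompose $F$ into its constituent pieces --- the composition $G_M(x+h)$, the Young integrals against $d(x+h)$, and the composition with $\sig$ --- and verify Fr\'echet differentiability of each via the chain and product rules before reassembling formulas (\ref{7}) and (\ref{8}). These formulas are exactly what formal differentiation produces, so the real work lies in showing the remainders are of the correct order in the $\cac^\ka$-norm.

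First I would establish that $G_M:\cac^{\gam+\ep}\to\re$ is Fr\'echet smooth. Writing $G_M(x)=\ffi_M(N(x))$ with $N(x)=\|x\|_{\gam,p}^{2p}$ and using $\ffi_M\in \cac^\infty_b$, the chain rule reduces matters to smoothness of $N$. But for $k\in \cac^{\gam+\ep}$ the binomial theorem gives
\[
N(x+k) = \iou\iou \frac{\bigl(x(\zeta)-x(\eta)+k(\zeta)-k(\eta)\bigr)^{2p}}{|\zeta-\eta|^{2p\gam+2}} \, d\zeta\, d\eta,
\]
which is a polynomial of degree $2p$ in $k$. Its linear coefficient is the candidate derivative $DN(x)\cdot k$, and the higher-order terms are $O(\|k\|_{\gam,p}^{j})$ for $j\ge 2$; by Remark \ref{fita} we have $\|k\|_{\gam,p}\leq C\|k\|_{\gam+\ep}$, so these are $o(\|k\|_{\gam+\ep})$. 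Composing with $\ffi_M$ delivers the Fr\'echet derivative $DG_M(x)\cdot k$ as a continuous linear functional on $\cac^{\gam+\ep}$.

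Next I would exploit the bilinearity of the Young integral $(g,f)\mapsto \int g_\xi\, df_\xi$, which combined with (\ref{eq:ineq-young-sharp}) makes it jointly Fr\'echet smooth from $\cac^\ka\times \cac^{\gam+\ep}$ to $\re$. Applying the product rule to the integrand $\sig_M(x+h,z_\xi)=G_M(x+h)\sig(z_\xi)$ and splitting the integrator as $d(x+h+k)=d(x+h)+dk$, the first-order variation in $h$ produces the two linear contributions
\[
\int_u^1 \sig_M(x+h,z_\xi)\, dk_\xi \quad\text{and}\quad \int_u^1 \bigl(DG_M(x+h)\cdot k\bigr) \sig(z_\xi)\, d(x+h)_\xi
\]
appearing in (\ref{7}); integrating against $du$ on $[0,t]$ and subtracting the boundary term $t\int_0^1\xi(\cdots)$ yields the claimed derivative. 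To confirm the remainder is $o(\|k\|_{\gam+\ep})$ in $\cac^\ka$, one applies (\ref{eq:ineq-young-sharp}) to each quadratic-or-higher term in $k$; the outer primitive $t\mapsto \int_0^t du\int_u^1(\cdots)d(x+h)_\xi$ automatically produces a Lipschitz (hence $\ka$-H\"older) function whose norm is controlled by products of H\"older norms.

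The derivative with respect to $z$ is simpler: $G_M(x+h)$ is inert, and the Taylor expansion $\sig(z_\xi+g_\xi)=\sig(z_\xi)+\sig'(z_\xi)g_\xi+R_\xi$ with $|R_\xi|\le \|\sig''\|_\infty g_\xi^2$ leads via the second inequality of Lemma \ref{l1} to $\|\sig(z+g)-\sig(z)-\sig'(z)g\|_\ka=o(\|g\|_\ka)$. Plugging this into the Young integrals produces (\ref{8}). The main technical obstacle throughout is that all remainders must be controlled in the $\ka$-H\"older norm rather than just the uniform norm; this forces systematic use of (\ref{eq:ineq-young-sharp}) together with Lemma \ref{l1}, and is the reason we assume $\sig\in\cac^4$ --- enough regularity both for the second-order expansion here and for the higher derivatives required in the subsequent Malliavin analysis.
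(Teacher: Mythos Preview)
Your proposal is correct and follows essentially the same route as the paper, which likewise establishes Fr\'echet differentiability of $G_M$ (Remark~\ref{remark1}), decomposes the first-variable remainder into exactly the cross-terms you identify, and handles the $z$-derivative via Fr\'echet differentiability of $\sig:\cac^\ka\to\cac^\ka$. One small caveat: Lemma~\ref{l1} bounds $\|\sig(y)-\sig(z)\|_\ka$, not the second-order remainder $\|\sig(z+g)-\sig(z)-\sig'(z)g\|_\ka$ in the H\"older norm---the paper isolates this as a separate Lemma~\ref{l2}, since the pointwise estimate $|R_\xi|\le \tfrac12\|\sig''\|_\infty g_\xi^2$ you wrote controls only $\|R\|_\infty$, while bounding the H\"older seminorm of $R$ requires the same interpolation-path argument as in the proof of Lemma~\ref{l1} but using one more derivative of $\sig$.
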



\begin{remark}\label{remark1}
In the above formulae (\ref{7}) and (\ref{8}), the Fr\'echet derivative of $G_M(\cdot)$ is well-defined and can be computed explicitly. Indeed, $G_M$ is
defined on the H\"older space $\cac^{\gam+\ep}$, takes values in $\re$ and is defined by $G_M(x)=
\ffi_M(\|x\|_{\gam,p}^{2p})$, with some $p\geq 1$. Moreover, $\ffi_M$ is a smooth function
which fulfills Hypothesis \ref{hyp:phi}. Hence, the Fr\'echet
derivative $D G_M(x)$ at any point $x\in \cac^{\gam+\ep}$ defines a linear map on
$\cac^{\gam+\ep}$ with values in $\re$, and it is straightforward to check that it is
given by
$$DG_M(x)\cdot k = 2p \, \ffi_M'(\|x\|^{2p}_{\gam,p}) \int_0^1\int_0^1
\frac{(x_\zeta-x_\eta)^{2p-1} (k_\zeta-k_\eta)}{|\zeta-\eta|^{2\gam p +2}} d\zeta d\eta,\quad k\in \cac^{\gam+\ep}.
$$
Moreover, we have that
\begin{equation}\label{eq:ineq-DG-M}
\|D G_M(x)\|:=\|D G_M(x)\|_{\cl(\cac^{\gam+\ep};\, \R)}\leq C_p \|x\|_{\gam+\ep},
\end{equation}
where the norm on the left-hand side denotes the corresponding operator norm.
\end{remark}

\begin{remark}\label{rmk:compact-deriv}
As in Remark \ref{rmk:compact-eq}, one can apply Fubini's theorem for Young integrals in order to obtain some more compact expressions for the derivatives of $F$. Indeed, it is readily checked that
\begin{multline}
\label{eq:compact-deriv1}
(D_1F(h,z)\cdot k)_t =
-G_M(x+h) \iou K(t,\xi) \si(z_\xi) \, dk_\xi  \\
- (D G_M(x+h)\cdot k) \iou K(t,\xi) \si(z_\xi) \, d(x+h)_\xi
\end{multline}
and
\begin{equation*}
(D_2F(h,z)\cdot g)_t =   g_t - G_M(x+h) \iou K(t,\xi) \si'(z_\xi) g_\xi \,d(x+h)_\xi,
\end{equation*}
where $K$ is the kernel defined by (\ref{eq:def-kernel}).
\end{remark}

\begin{remark}\label{remark2}
As it will be explained later on in the paper, we will apply the results of this section to the case where $x$ is a fractional Brownian motion with Hurst parameter $H>\frac12$, defined on a complete probability space $(\Omega,\cf,P)$. In particular, the paths of $x$ are almost surely $\gam$-H\"older
continuous for all $\gam<H$, with $\gam$-H\"older norm in $L^p(\oom)$ for any $p\ge 1$. Thus, if we fix $\gam<H$, we will be able to find $\ep>1/(2p)$ satisfying $\gam+\ep<H$. This opens the possibility to apply the results of the current section to this particular case.
\end{remark}


\begin{proof}[Proof of Proposition \ref{prop1}]

Though the following considerations might be mostly standard (see \cite{LT,NS} for similar calculations), we include most of the details here for the sake of clarity. We will develop the proof in several steps.

\smallskip

\noindent
{\it Step 1.} First of all, let us prove that $F$ is
continuous. For this, let $h,\tilde h\in \cac^{\gam+\ep}$ and $z,\tilde z\in
\cac^\kappa$, so that we need to study the increment $\del(F(h,z)-F(\tilde h,\tilde
z))_{st}$, for $0\leq s<t\leq 1$. Indeed, we have that
\beq
|\del(F(h,z)-F(\tilde h,\tilde z))_{st}|\leq A_1+A_2+A_3,
\label{14}
\eeq
where
$$A_1=|\del(z-\tilde z)_{st}|,$$
$$A_2= \int_s^t du \left| \int_u^1\sig_M(x+h,z_\xi)\, d(x+h)_\xi -
\int_u^1\sig_M(x+\tilde h,\tilde z_\xi)\, d(x+\tilde h)_\xi\right|,$$
$$A_3= (t-s) \left| \int_0^1 \xi\, \sig_M(x+h,z_\xi)\, d(x+h)_\xi -
\int_0^1 \xi\, \sig_M(x+\tilde h,\tilde z_\xi)\, d(x+\tilde h)_\xi \right|.$$ It is clear
that
$$A_1\leq \|z-\tilde z\|_\kappa (t-s)^\kappa.$$
On the other hand, the term $A_2$ can be decomposed as $A_2\le A_{11}+A_{12}$, with:
\begin{eqnarray}\label{9}
A_{11}&=& \int_s^t du \left| \int_u^1 \left(\sig_M(x+h,z_\xi) - \sig_M(x+\tilde h,\tilde
z_\xi)\right)\, d(x+h)_\xi\right| \notag\\
A_{12}&=&\int_s^t du \left| \int_u^1 \sig_M(x+\tilde h,\tilde z_\xi) \, d(h-\tilde
h)_\xi\right|  
\end{eqnarray}
In addition, our bound (\ref{y}) on Young type integrals easily yields
\beq
A_{12}\leq G_M(x+\tilde h) (\|\sig'\|_\infty \|\tilde z\|_\kappa + \|\sig\|_\infty) \|h-\tilde h\|_{\gam + \ep} (t-s).
\label{10}
\eeq
We still need to bound the term $A_{11}$ by a sum $B_1+B_2$, where the latter terms are
defined by:
$$B_1= \int_s^t du \left| \int_u^1 \left(\sig_M(x+h,z_\xi) - \sig_M(x+ h,\tilde
z_\xi)\right)\, d(x+h)_\xi\right|,$$
$$B_2= \int_s^t du \left| \int_u^1 \left(\sig_M(x+h,\tilde z_\xi) - \sig_M(x+\tilde h,\tilde
z_\xi)\right)\, d(x+h)_\xi\right|.$$
Now, invoking  Lemma \ref{l1}, we get
\begin{align}
B_1 & \leq  C G_M(x+h) \|x+h\|_{\gam+\ep}  \|\sig(z)-\sig(\tilde z)\|_\kappa  (t-s)\nonumber\\
&  \leq C G_M(x+h) \|x+h\|_{\gam+\ep} \left(\|\sig'\|_\infty + \|\sig''\|_\infty
(\|z\|_\kappa + \|z-\tilde z\|_\kappa)\right) \|z-\tilde z\|_\kappa (t-s). \label{11}
\end{align}
Concerning the term $B_2$, notice that we clearly have
\begin{align*}
B_2 &\leq  \left|G_M(x+h)-G_M(x+\tilde h)\right| \int_s^t \left|\int_u^1 \sig(\tilde
z_\xi)\, d(x+h)_\xi\right| du \\
& \leq \left|G_M(x+h)-G_M(x+\tilde h)\right| \|x+h\|_{\gam+\ep} (\|\sig'\|_\infty \|\tilde z\|_\kappa + \|\sig\|_\infty) (t-s).
\end{align*}
Let us eventually analyse the difference $|G_M(x+h)-G_M(x+\tilde h)|$ on the right hand-side above: by
definition of $G_M$ and the properties of $\ffi_M$ summarized in Hypothesis \ref{hyp:phi}, we can argue as follows:
\begin{align}
|G_M(x+h)-G_M(x+\tilde h)| & = |\ffi_M(\|x+h\|_{\gam,p}^{2p})-\ffi_M(\|x+\tilde h\|_{\gam,p}^{2p})| \nonumber \\
&\leq C_{M,p} |\|x+h\|_{\gam,p}-\|x+\tilde h\|_{\gam,p}| \nonumber \\
& \leq C_{M,p} \|h-\tilde h\|_{\gam,p}
\label{17}
\end{align}
and this last term may be bounded, up to some constant, by $\|h-\tilde h\|_{\gam+\ep}$, because we have chosen $\ep$ to
be small but verifying $\ep>\frac{1}{2p}$ (see Remark \ref{fita}) . This implies that
\beq
B_2 \leq C \|h-\tilde
h\|_{\gam+\ep} \|x+h\|_{\gam+\ep} (\|\sig'\|_\infty \|\tilde z\|_\kappa + \|\sig\|_\infty) (t-s).
\label{12}
\eeq
Plugging the bounds
(\ref{10}), (\ref{11}) and (\ref{12}) in (\ref{9}), we obtain that
\begin{align}
A_2 & \leq   C_1 (\|\sig'\|_\infty \|\tilde z\|_\kappa + \|\sig\|_\infty) ( G_M(x+\tilde h) + \|x+h\|_{\gam+\ep})
\|h-\tilde h\|_{\gam+\ep} (t-s) \nonumber \\
& + C_2 G_M(x+h) \|x+h\|_{\gam+\ep} \left(\|\sig'\|_\infty + \|\sig''\|_\infty
(\|z\|_\kappa + \|z-\tilde z\|_\kappa)\right) \|z-\tilde z\|_\kappa (t-s),
\label{13}
\end{align}
where $C_1,C_2$ denote some positive constants.

\smallskip

The analysis for the term $A_3$ is very similar to that of $A_2$ and, indeed, for the former we end up with a similar bound as in (\ref{13}). Therefore,
going back to expression (\ref{14}), we have proved that
$$\|F(h,z)-F(\tilde h,\tilde z)\|_\kappa \leq C(M,\sig,x,z,\tilde z,h,\tilde h) (\|z-\tilde z\|_\kappa + \|h-\tilde h\|_{\gam+\ep}),$$
which implies that $F$ is continuous.

\smallskip

\noindent
{\it Step 2.} Let us prove now that the Fr\'echet derivative of $F$ with respect to $h$ is given by~(\ref{7}). First of all, let us check that
$D_1F(h,z):\cac^{\gam+\ep} \rightarrow \cac^\kappa$, as defined by expression~(\ref{7}), is a continuous map. Indeed, owing to inequality (\ref{y}) and Remark \ref{remark1}, one can easily check from expressions (\ref{7}) and (\ref{eq:ineq-DG-M}) that
$$\|D_1F(h,z)\cdot k\|_\kappa \leq C (\|\sig'\|_\infty \|z\|_\kappa + \|\sig\|_\infty) (G_M(x+h)+ \|x+h\|_{\gam+\ep}^2) \|k\|_{\gam+\ep},
$$
which implies that $D_1F(h,z)$ is continuous.

\smallskip

In order to prove that (\ref{7}) also represents the Fr\'echet derivative of $F$ with respect to the first variable, we fix $h\in
\cac^{\gam+\ep}$ and $z\in \cac^\kappa$, so that we need to prove that \beq \lim_{\|k\|_{\gam+\ep}\rightarrow 0}
\frac{\|F(h+k,z)-F(h,z)-D_1 F(h,z)\cdot k\|_\kappa}{\|k\|_{\gam+\ep}}=0. \label{19} \eeq For this, let $0\leq s<t\leq
1$, $h,k\in \cac^{\gam+\ep}$ and $z\in \cac^\kappa$, and we proceed to analyze the increment
\beq
|\del(F(h+k,z)-F(h,z)-D_1 F(h,z)\cdot k)_{st}|.
\label{15}
\eeq
According to (\ref{7}), the above increment can be
split into a sum of four terms, which we denote by $E_i$, $i=1,\dots,4$, and are defined as follows:
\begin{eqnarray*}
E_1&=&\int_s^t du \int_u^1 \left[ G_M(x+h)- G_M(x+h+k)- D G_M(x+h)\cdot k\right] \sig(z_\xi) \, d(x+h)_\xi, \\
E_2&=& \int_s^t du \int_u^1 \left[ G_M(x+h)- G_M(x+h+k) \right] \sig(z_\xi) \, dk_\xi, \\
E_3&=&(t-s) \int_0^1 \left[ G_M(x+h)- G_M(x+h+k)- D G_M(x+h)\cdot k\right] \xi\, \sig(z_\xi) \, d(x+h)_\xi, \\
E_4&=& (t-s) \int_0^1 \left[ G_M(x+h)- G_M(x+h+k) \right] \xi \,\sig(z_\xi) \, dk_\xi. \\
\end{eqnarray*}
We will only deal with the study of the terms $E_1$ and $E_2$, since the remaining ones involve analogous arguments. First, note that we have the following estimates:
\begin{align}
|E_1| & \leq  \left| G_M(x+h)- G_M(x+h+k)- D G_M(x+h)\cdot k\right|\, \left| \int_s^t du \int_u^1 \sig(z_\xi) \, d(x+h)_\xi \right| \nonumber \\
& \leq C \left| G_M(x+h)- G_M(x+h+k)- D G_M(x+h)\cdot k\right| \nonumber \\
& \hspace{6cm}  \times \|x+h\|_{\gam+\ep} (\|\sig'\|_\infty \|z\|_\kappa + \|\sig\|_\infty) (t-s).
\label{16}
\end{align}
By Remark \ref{remark1}, the map $G_M: \cac^{\gam+\ep}\rightarrow \re$ is Fr\'echet differentiable and its derivative can be computed explicitly.
Hence,
$$\lim_{\|k\|_{\gam+\ep}\rightarrow 0} \frac{\left| G_M(x+h)- G_M(x+h+k)- D G_M(x+h)\cdot k\right|}{\|k\|_{\gam+\ep}}=0,$$
and this implies that the contribution of $|E_1|$ is of order $o(\|k\|_{\gam+\ep})$.

\smallskip

On the other hand, using the same arguments as in (\ref{17}), we have:
\begin{align}
|E_2|& \leq C \left| G_M(x+h)-G_M(x+h+k) \right| \|k\|_{\gam+\ep} (\|\sig'\|_\infty \|z\|_\kappa + \|\sig\|_\infty) (t-s) \nonumber \\
& \leq C \|k\|_{\gam+\ep}^2 (\|\sig'\|_\infty \|z\|_\kappa + \|\sig\|_\infty) (t-s),
\label{18}
\end{align}
which is obviously also of order $o(\|k\|_{\gam+\ep})$.

\smallskip

For the terms $|E_3|$ and $|E_4|$ we obtain, respectively, the same bounds as in (\ref{16}) and~(\ref{18}). Eventually, plugging all these estimates in (\ref{15}), we end up with the limit (\ref{19}).

\smallskip

\noindent
{\it Step 3.} In this part, we prove that the Fr\'echet derivative of $F$ with respect to the second variable is given by (\ref{8}).
The continuity of $D_2F(h,z)$ in (\ref{8}) can be proved as we have done in Step 2 for $D_1F(h,z)$.  Hence,
we will check that, for all $h\in \cac^{\gam+\ep}$ and $z\in \cac^\kappa$, it holds:
\beq
\lim_{\|g\|_\kappa \rightarrow 0} \frac{\|F(h,z+g)-F(h,z)-D_2 F(h,z)\cdot g\|_\kappa}{\|g\|_\kappa}=0.
\label{20}
\eeq
Throughout this step we will use the fact that $\sig$, considered as a map defined on and taking values into $\cac^\kappa$, is Fr\'echet differentiable
and its derivative is given by (see Lemma \ref{l2} below):
$$(D \sig(z) \cdot g)_t = \sig'(z_t) g_t,\quad z,g\in \cac^\kappa.$$
This means that, for all $z\in \cac^\kappa$,
$$\lim_{\|g\|_\kappa \rightarrow 0} \frac{\|\sig(z+g)-\sig(z)-D\sig(z)\cdot g\|_\kappa}{\|g\|_\kappa}=0.$$
In order to prove (\ref{20}), let us fix $0\leq s<t\leq 1$ and observe that
$$\left| \del \left( F(h,z+g)-F(h,z)-D_2 F(h,z)\cdot g \right)_{st} \right| \leq F_1 + F_2,$$
where
\begin{align*}
F_1 & := \left| \int_s^t \int_u^1 G_M(x+h) [\sig(z_\xi+g_\xi)-\sig(z_\xi) - \sig'(z_\xi) g_\xi] d(x+h)_\xi\, du \right| \nonumber \\
& \leq C G_M(x+h) \|x+h\|_{\gam+\ep} \|\sig(z+g)-\sig(z)-\sig'(z) g\|_\kappa (t-s)
\end{align*}
and
\begin{equation*}
F_2  := (t-s) \left| \int_0^1 G_M(x+h) \xi \,[\sig(z_\xi+g_\xi)-\sig(z_\xi) - \sig'(z_\xi) g_\xi] d(x+h)_\xi \right|,
\end{equation*}
for which the same inequality as for $F_1$ is available. Therefore, we obtain that
\begin{align*}
& \|F(h,z+g)-F(h,z)-D_2 F(h,z)\cdot g\|_\kappa \\
& \qquad \qquad \leq C G_M(x+h) \|x+h\|_{\gam+\ep} \|\sig(z+g)-\sig(z)-\sig'(z) g\|_\kappa,
\end{align*}
and the latter $\kappa$-norm, as we have mentioned above, is of order $o(\|g\|_\kappa)$ whenever $\|g\|_\kappa$ tends to zero.
This implies that (\ref{20}) holds, and ends the proof. 

\end{proof}

\smallskip

Let us quote now the relation needed in the previous proof in order to compute the Fr\'echet derivative of the process $\si(z)$:
\begin{lemma}\label{l2}
Let $\sig\in \cac^4(\re)$ be a bounded function with bounded derivatives. Then $\sig$, understood as a map $\sig:\cac^\kappa\rightarrow \cac^\kappa$, is Fr\'echet differentiable and its derivative is given by:
$$(D \sig(z) \cdot g)_t = \sig'(z_t) g_t,\quad z,g\in \cac^\kappa.$$
\end{lemma}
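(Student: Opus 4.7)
The plan is to verify first that the candidate derivative is a bounded linear map and then control the remainder via a second-order Taylor expansion.

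For the linearity and boundedness, note that since $\sig'$ is $\cac^1$ with bounded derivatives, Lemma \ref{l1} applied to $\sig'$ in place of $\sig$ gives $\sig'(z)\in\cac^\ka$ with $\|\sig'(z)\|_\ka \le \|\sig''\|_\infty\|z\|_\ka + \|\sig'\|_\infty$. Using the product inequality $\|fg\|_\ka\le \|f\|_\ka\|g\|_\ka$ recorded after the Young proposition, the map $g\mapsto \sig'(z)\,g$ sends $\cac^\ka$ into itself with operator norm bounded by $\|\sig''\|_\infty\|z\|_\ka + \|\sig'\|_\infty$, so it is bounded linear on $\cac^\ka$.

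Next I would produce a second-order Taylor remainder. For fixed $z,g\in\cac^\ka$ and $t\in\ou$, writing
\[
R(z,g)_t := \sig(z_t+g_t)-\sig(z_t)-\sig'(z_t)\,g_t = g_t^2 \, \Phi(z_t,g_t),\qquad \Phi(y,w):=\int_0^1(1-\mu)\sig''(y+\mu w)\,d\mu,
\]
and noting that since $\sig\in\cac^4$ with bounded derivatives, $\Phi$ is $\cac^1$ in both variables with bounded partial derivatives (depending only on $\|\sig''\|_\infty$, $\|\sig'''\|_\infty$). The goal is to show $\|R(z,g)\|_\ka = o(\|g\|_\ka)$ as $\|g\|_\ka\to 0$, with $z$ fixed.

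For the sup part, $|R(z,g)_t|\le \tfrac12\|\sig''\|_\infty \|g\|_\infty^2 \le C\|g\|_\ka^2$. For the H\"older part, I would expand
\[
\del R(z,g)_{st} = (g_t^2-g_s^2)\,\Phi(z_t,g_t) + g_s^2\,\bigl[\Phi(z_t,g_t)-\Phi(z_s,g_s)\bigr],
\]
and bound each piece separately: using $|g_t^2-g_s^2|\le 2\|g\|_\infty\|g\|_\ka|t-s|^\ka$ and the boundedness of $\Phi$, the first contribution is at most $C\|g\|_\ka^2|t-s|^\ka$; using the mean value theorem for $\Phi$ together with the H\"older regularity of $z$ and $g$, the second contribution is at most $C\|g\|_\infty^2(\|z\|_\ka+\|g\|_\ka)|t-s|^\ka$. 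Summing the estimates yields
\[
\|R(z,g)\|_\ka \le C\bigl(1+\|z\|_\ka+\|g\|_\ka\bigr)\,\|g\|_\ka^2,
\]
which is $o(\|g\|_\ka)$ for $z$ fixed, proving Fr\'echet differentiability with the claimed derivative.

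I expect no serious obstacle: the computation is routine once one observes the factorization $R=g^2\Phi(z,g)$ and uses the product inequality for $\cac^\ka$; the only mild care needed is in separating the $(g_t^2-g_s^2)$ and $\Phi$-increment contributions so that one gets a quadratic power of $\|g\|_\ka$ in the end.
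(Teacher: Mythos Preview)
Your argument is correct: the Taylor representation $R(z,g)_t=g_t^2\,\Phi(z_t,g_t)$ with $\Phi$ bounded and $\cac^1$ (since $\sig\in\cac^4$ with bounded derivatives) is the right device, and your splitting of $\del R(z,g)_{st}$ cleanly produces the estimate $\|R(z,g)\|_\ka\le C(1+\|z\|_\ka+\|g\|_\ka)\|g\|_\ka^2$, which gives Fr\'echet differentiability.

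By way of comparison, the paper does not carry out any computation here: it simply refers the reader to \cite[Proposition~3.5]{LT} for a proof. Your proposal is thus more self-contained than what the paper offers, and the route you take --- second-order Taylor with integral remainder plus the product inequality in $\cac^\ka$ --- is exactly the natural one (and essentially what the cited reference does). One minor cosmetic point: when you invoke Lemma~\ref{l1} for $\sig'$, you are using that $\sig'\in\cac^2$ with bounded derivatives, which follows from the hypothesis $\sig\in\cac^4$ bounded together with its derivatives; it is worth making this explicit since Lemma~\ref{l1} requires a $\cac^2$ input.
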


\begin{proof}  
We refer to \cite[Proposition 3.5]{LT} for the proof of this fact, and in particular for the identification of $(D \sig(z) \cdot g)_t$ with the quantity $\si'(z_t)\cdot g_t$. 

\end{proof}

\smallskip

As in \cite{NS}, a crucial step in order to differentiate $z$ with respect to the driving noise $x$ is to solve the following class of linear elliptic PDEs:
\begin{proposition}\label{prop2}
Let $\gam, \kappa \in (0,1)$ be such that $\gam+\kappa>1$. Assume that we are given $x\in \cac^\gam$ and $w, R\in \cac^\kappa$ such that
the $\kappa$-norm of $R$ verifies
\beq
\|R\|_\kappa < \frac{c_2}{M+1},
\label{b-norm}
\eeq
for some small enough constant $c_2<1$.
Then, there exists a unique solution $\{y_t,\; t\in [0,1]\}$ in $\cac^\kappa$ of the following
linear integral equation: 
\begin{equation*}
y_t=w_t-G_M(x) \iou K(t,\xi) R_\xi \, y_\xi \, dx_\xi,
\quad t\in\ou.
\end{equation*}
Moreover, there exists a positive constant $c(M)$ that only depends on $M$ such that
\beq
\|y\|_\kappa \leq c(M) \|w\|_\kappa.
\label{22}
\eeq
\end{proposition}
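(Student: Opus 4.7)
The strategy is to recycle the Banach fixed point scheme of Theorem \ref{t1}. Since the equation is affine in $y$, I would define the map $\Gam \colon \cac^\kappa \to \cac^\kappa$ by
\begin{equation*}
\Gam(y)_t := w_t - G_M(x) \iou K(t,\xi) R_\xi y_\xi \, dx_\xi, \qquad t\in\ou,
\end{equation*}
and attempt to show that $\Gam$ is a strict contraction on the whole space $\cac^\kappa$. Both existence and uniqueness of the fixed point, as well as the a priori bound (\ref{22}), will follow from this in one shot.

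To produce the contraction, I would first invoke the Fubini identity of Proposition \ref{prop:fubini-young} in order to recast the increment $\del\Gam(y)_{st}$ in the form
\begin{equation*}
\del \Gam(y)_{st} = \del w_{st} - G_M(x) \left[ \int_s^t du \int_u^1 R_\xi y_\xi \, dx_\xi - (t-s) \iou \xi \, R_\xi y_\xi \, dx_\xi \right],
\end{equation*}
which is precisely the linear analogue of the expression handled in the proof of Theorem \ref{t1}. The Young estimate (\ref{y}), together with the product property $\|Ry\|_\kappa \leq \|R\|_\kappa \|y\|_\kappa$, then yields
\begin{equation*}
|\del\Gam(y)_{st}| \leq |\del w_{st}| + C \, G_M(x) \|x\|_\gam \|R\|_\kappa \|y\|_\kappa |t-s|^\kappa,
\end{equation*}
after absorbing the outer factor $(t-s)$ into $(t-s)^\kappa$ (since $\kappa<1$). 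The supremum part of the $\cac^\kappa$-norm is controlled by the same estimate evaluated at $s=0$, using that $K(0,\xi)\equiv 0$ and hence $\Gam(y)_0 = w_0$. Replacing $y$ by $y_1-y_2$ throughout yields the Lipschitz bound
\begin{equation*}
\|\Gam(y_1) - \Gam(y_2)\|_\kappa \leq C \, G_M(x) \|x\|_\gam \|R\|_\kappa \|y_1 - y_2\|_\kappa.
\end{equation*}

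For the contraction step, I would argue exactly as in Theorem \ref{t1}: the definition of $G_M$ together with Garsia's embedding (Remark \ref{fita}) forces $G_M(x)\|x\|_\gam$ to be bounded by a constant depending only on $M$. Combining with the standing assumption $\|R\|_\kappa < c_2/(M+1)$, the Lipschitz constant of $\Gam$ becomes at most of order $C c_2 M/(M+1)$, which is strictly less than $1$ provided $c_2$ is chosen small enough. Banach's theorem then produces a unique $y \in \cac^\kappa$ with $\Gam(y)=y$, and substituting $y_2 = 0$ (note $\Gam(0) = w$) into the contraction estimate gives $\|y\|_\kappa \leq \|w\|_\kappa + \frac{1}{2}\|y\|_\kappa$, which rearranges into~(\ref{22}).

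The step I would verify most carefully is the uniform bound on $G_M(x)\|x\|_\gam$ under the weaker regularity hypothesis $x\in\cac^\gam$ (rather than $\cac^{\gam+\ep}$ as in Theorem \ref{t1}): this requires a compatible choice of the parameter $p$ appearing in $G_M$ so that Garsia's lemma may be invoked on the cutoff region $\|x\|_{\gam,p}^{2p}\leq M+1$. All other ingredients are direct linear simplifications of the nonlinear estimates already carried out in the proof of Theorem \ref{t1}, and should present no additional obstacle.
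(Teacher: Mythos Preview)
Your proposal is correct and follows essentially the same route as the paper's own proof. Both arguments define the affine map (the paper calls it $\Theta$), establish the Lipschitz bound $\|\Theta(y)-\Theta(\tilde y)\|_\kappa \le C\,G_M(x)\|x\|_\gam\|R\|_\kappa\|y-\tilde y\|_\kappa$ via the Young estimate, and then use $G_M(x)\|x\|_\gam \le C(M)$ together with (\ref{b-norm}) to make the Lipschitz constant strictly less than $1$. The only cosmetic differences are that the paper writes the increment form of $\Theta$ directly rather than passing through Fubini, and that it obtains (\ref{22}) by bounding $\|y\|_\kappa$ from the fixed-point equation itself, whereas you deduce it from the contraction estimate with $y_2=0$; both are equivalent.

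Regarding the point you flag at the end: the paper does not belabor it either, simply asserting $G_M(x)\|x\|_\gam < M$. The justification is that whenever $G_M(x)\neq 0$ one has $\|x\|_{\gam,p}^{2p}\le M+1$, and Garsia's inequality then bounds $\|x\|_\gam$ by a constant depending only on $M$ and $p$; if instead $\|x\|_{\gam,p}=\infty$ (which can happen when merely $x\in\cac^\gam$), then $G_M(x)=0$ and the product vanishes. So no extra regularity on $x$ is needed, and your concern dissolves.
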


\begin{proof} 
As for Theorem \ref{t1}, we will use a fixed point argument, and solve our equation under the form
\beq
(\del y)_{st} =  (\del w)_{st} -G_M(x) \int_s^t du \left( \int_u^1  R_\xi \, y_\xi dx_\xi\right)
+ (t-s) G_M(x) \int_0^1 \xi \,  R_\xi \, y_\xi dx_\xi,
\label{21}
\eeq
for any $0\leq s<t\leq 1$. More precisely, let us define the map $\Theta: \cac^\kappa\rightarrow \cac^\kappa$ by
\[
\Theta(y)_t := w_t -G_M(x) \int_0^t du \left( \int_u^1  R_\xi \, y_\xi dx_\xi\right)
+ t \, G_M(x) \int_0^1 \xi \, R_\xi \, y_\xi dx_\xi,
\]
for any $y\in \cac^\kappa$ and $t\in [0,1]$. Using elementary properties of Young integrals, one easily checks that the map $\Theta$ is well-defined,
that is $\Theta(y)$ belongs to $\cac^\kappa$ whenever $y\in \cac^\kappa$.

\smallskip

On the other hand, in order to prove that $\Theta$ defines a contraction, we will show that it exhibits a Lipschitz property with Lipschitz constant $L<1$.
Indeed, let us fix $y,\tilde y\in \cac^\kappa$ and proceed to study the increment $\del(\Theta(y)-\Theta(\tilde y))_{st}$ for any $0\leq s<t\leq 1$:
\begin{align*}
& |\del(\Theta(y)-\Theta(\tilde y))_{st}| \\
&\qquad  \leq  G_M(x) \int_s^t du \left| \int_u^1 R_\xi \, (y_\xi-\tilde y_\xi) \, dx_\xi \right|
+(t-s)\, G_M(x) \left| \int_0^1 \xi \, R_\xi \, (y_\xi-\tilde y_\xi) \, dx_\xi\right| \\
& \qquad  \leq C G_M(x) \|x\|_\gam \|R\|_\kappa \|y-\tilde y\|_\kappa (t-s).
\end{align*}
Hence, taking into account that $G_M(x) \|x\|_\gam < M$ and the assumptions on $R$, we conclude that
\[
\|\Theta(y)-\Theta(\tilde y)\|_\kappa \leq  c_2 \frac{C M}{M+1} \|y-\tilde y\|_\kappa.
\]
Choosing the constant $c_2$ conveniently, we get that $\Theta$ is Lipschitz with Lipschitz constant $L<1$. Therefore,
$\Theta$ defines a contraction and it has a unique fixed point, which solves equation (\ref{21}).

\smallskip

Eventually, the bound (\ref{22}) can be easily obtained using similar arguments as the ones developed so far. Indeed, 
observe that we have
\[
 |(\del y)_{st}|\leq C M \|R\|_\kappa \|y\|_\kappa (t-s) + \|w\|_\kappa (t-s)^\kappa
\]
and also $\|y\|_\infty \leq C M \|R\|_\kappa \|y\|_\kappa + \|w\|_\infty$. Thus
\[
 \|y\|_\kappa \leq C M \|R\|_\kappa \|y\|_\kappa + \|w\|_\kappa,
\]
from which one deduces (\ref{22}), provided our constant $c_2$ is chosen small enough.

\end{proof}

\smallskip

At this point, we can proceed to state and prove the main result of the section.

\begin{theorem}\label{t2}
Let $\gam, \kappa\in (0,1)$ be such that $\gam+\kappa>1$. Let $\ep>0$ and a sufficiently large $p\geq 1$ so that $\ep>\frac{1}{2p}$.
Assume that $\sig \in \cac^4(\re)$ is a bounded function with bounded derivatives such that:
\begin{equation}\label{eq:hyp-sigma-j}
\|\sig^{(j)}\|_{\infty}\leq \frac{c_3}{M+1},\qquad j=0,1,2,
\end{equation}
for some constant $c_3<\frac{c_2}{1+C(M)}\wedge c_1$, where $c_1$ and $C(M)$ are the constants in the statement of Theorem \ref{t1}, and $c_2$ the one of Proposition \ref{prop2}.

Let $z(x)=\{z_t,\;t\in [0,1]\}$ be the solution of Equation (\ref{4}) with control $x\in \cac^{\gam+\ep}$ and
diffusion coefficient $\sig_M$ (see (\ref{35}) and (\ref{36})).
Then, the map $x\mapsto z(x)$, defined in $\cac^{\gam+\ep}$ with values in $\cac^\kappa$ is Fr\'echet differentiable. Moreover, for all $h\in \cac^{\gam+\ep}$, the Fr\'echet derivative
of $z(x)$ is given by:
\beq
(Dz(x) \cdot h)_t = \int_0^1 \Phi_s(t) dh_s,
\label{23}
\eeq
where the kernels $\Phi_s(t)$ satisfy the following equation:
\beq
\Phi_s(t) =  \Psi_s(t) + G_M(x) \iou K(t,\xi) \si'(z_\xi)  \Phi_s(\xi) \, dx_\xi,
\label{58}
\eeq
with
\begin{equation}\label{24}
\Psi_s(t)   =  G_M(x) \, \sig(z_s) \, K(t,s)
+ 2 \ffi'_M(\|x\|^{2p}_{\gam,p}) \, \mu_s \, z_t,
\end{equation}
and
\begin{equation}\label{eq:def-mu}
\mu_s:=\int_0^s \int_s^1 \rho_{\zeta \eta} \, d\zeta d\eta,
\quad\mbox{where}\quad
\rho_{\zeta \eta} = 2p \frac{(x_\zeta-x_\eta)^{2p-1}}{|\zeta-\eta|^{2\gam p+2}}.
\end{equation}
\end{theorem}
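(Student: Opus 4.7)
The plan is to apply the implicit function theorem to the map $F$ of Proposition~\ref{prop1}. By Remark~\ref{rmk:compact-eq}, the solution $z(x)$ produced by Theorem~\ref{t1} is characterized as the unique element of $\cac^\kappa$ satisfying $F(0,z)=0$, and for $h$ in a small ball of $\cac^{\gam+\ep}$ the shifted problem $F(h,\cdot)=0$ still admits a unique solution (by Theorem~\ref{t1} applied to the control $x+h$). Proposition~\ref{prop1} furnishes the Fréchet differentiability of $F$ in both arguments, so to deduce the differentiability of $x\mapsto z(x)$ it suffices to show that $D_2F(0,z(x))\in\cl(\cac^\kappa,\cac^\kappa)$ is a toplinear isomorphism; the identity $Dz(x)\cdot h=-[D_2F(0,z)]^{-1}(D_1F(0,z)\cdot h)$ will then give an explicit handle on $Dz(x)$.

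Invertibility of $D_2F(0,z)$ is essentially Proposition~\ref{prop2} in disguise: using the compact form of Remark~\ref{rmk:compact-deriv}, solving $D_2F(0,z)\cdot y=w$ amounts to solving $y_t=w_t+G_M(x)\iou K(t,\xi)\sig'(z_\xi)y_\xi\,dx_\xi$, which is of the form covered by Proposition~\ref{prop2} with $R=-\sig'(z)$. To check the smallness requirement (\ref{b-norm}), I combine Lemma~\ref{l1}, the a priori bound (\ref{eq:bound-z}), and the hypothesis (\ref{eq:hyp-sigma-j}):
\begin{equation*}
\|\sig'(z)\|_\kappa\leq \|\sig''\|_\infty\|z\|_\kappa+\|\sig'\|_\infty\leq \frac{c_3(C(M)+1)}{M+1}<\frac{c_2}{M+1},
\end{equation*}
the last inequality coming from the choice $c_3<c_2/(1+C(M))$. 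Proposition~\ref{prop2} then supplies a bounded inverse whose operator norm depends only on $M$, so that the implicit function theorem applies and yields the Fréchet differentiability of $x\mapsto z(x)$.

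It remains to identify the kernel $\Phi_s(t)$ in (\ref{23}) and derive (\ref{58}). Starting from the compact form of $D_1F$ in Remark~\ref{rmk:compact-deriv}, the first summand already reads $-G_M(x)\iou K(t,\xi)\sig(z_\xi)\,dh_\xi=-\iou G_M(x)K(t,s)\sig(z_s)\,dh_s$, and it remains to bring $(DG_M(x)\cdot h)\,\iou K(t,\xi)\sig(z_\xi)\,dx_\xi$ into the form $\iou(\cdot)\,dh_s$. Since the kernel $\rho_{\zeta\eta}$ defined in (\ref{eq:def-mu}) is antisymmetric in $(\zeta,\eta)$, writing $h_\zeta-h_\eta=\int_\eta^\zeta dh_s$ on $\{\zeta>\eta\}$ and exchanging orders of integration via Fubini yields $DG_M(x)\cdot h=2\ffi'_M(\|x\|_{\gam,p}^{2p})\iou\mu_s\,dh_s$ with $\mu_s$ as in (\ref{eq:def-mu}); combining with the identity of Remark~\ref{rmk:compact-eq}, one recognizes $-D_1F(0,z)\cdot h=\iou\Psi_s(t)\,dh_s$ for the kernel $\Psi_s(t)$ of (\ref{24}). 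Finally, setting $(Dz(x)\cdot h)_t=\iou\Phi_s(t)\,dh_s$, applying $D_2F(0,z)$ to both sides, and exchanging integrals by the Fubini-type Proposition~\ref{prop:fubini-young} to match kernels under $dh_s$, one reads off exactly equation (\ref{58}) for each fixed $s\in\ou$.

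The main technical obstacle is the bookkeeping around these Fubini exchanges: one must verify that the candidate kernel $\Phi_s(t)$ enjoys enough joint Hölder regularity in $(s,t)$ for Proposition~\ref{prop:fubini-young} to apply, and that equation (\ref{58}) is genuinely solvable in the $t$ variable for each fixed $s$. The latter follows from a second application of Proposition~\ref{prop2} with $w=\Psi_s(\cdot)$, using the same smallness estimate on $\sig'(z)$ as above, so that the solvability is uniform in $s$ and produces an $s$-measurable family $\{\Phi_s\}_{s\in\ou}$ with controlled $\cac^\kappa$-norm. Once this regularity is secured, the identification of $\Phi_s$ with the unique solution of (\ref{58}) follows by the uniqueness part of Proposition~\ref{prop2}, concluding the proof.
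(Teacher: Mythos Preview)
Your proposal is correct and follows essentially the same route as the paper: apply the implicit function theorem to the map $F$ of Proposition~\ref{prop1}, check that $D_2F(0,z)$ is invertible by feeding $R=\pm\sig'(z)$ into Proposition~\ref{prop2} (your explicit bound $\|\sig'(z)\|_\kappa\le c_3(C(M)+1)/(M+1)<c_2/(M+1)$ is exactly what the paper means by ``condition~(\ref{eq:hyp-sigma-j}) guarantees~(\ref{b-norm})''), and then identify the kernel via the $DG_M$ computation (your antisymmetry/Fubini argument is the content of Lemma~\ref{l3}) together with the Young--Fubini exchange of Proposition~\ref{prop:fubini-young}. The only cosmetic difference is the direction of the last step: the paper first \emph{defines} $\Phi_s$ as the solution of~(\ref{58}) and then shows that $\int_0^1\Phi_s(t)\,dh_s$ solves the same linear equation as $Dz(x)\cdot h$, invoking uniqueness in Proposition~\ref{prop2} to conclude; your phrasing ``setting $(Dz(x)\cdot h)_t=\int_0^1\Phi_s(t)\,dh_s$ and reading off~(\ref{58})'' goes the other way and is slightly heuristic as written, but your final paragraph makes clear you also solve~(\ref{58}) via Proposition~\ref{prop2} and close with uniqueness, so the two arguments coincide.
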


\begin{proof} 
We will adapt the arguments used in the proof of Proposition 4 in \cite{NS}. That is, we will apply the Implicit Function Theorem to the functional $F$ defined in the statement of Proposition \ref{prop1}. For this, notice first that we have proved there that, for any $h\in \cac^{\gam+\ep}$ and $z\in \cac^\kappa$, $F(h,z)$ belongs to
$\cac^\kappa$ and $F$ is Fr\'echet differentiable with partial derivatives with respect to $h$ and $z$ given by (\ref{7}) and (\ref{8}), respectively. Moreover,
since $z$ is the solution of (\ref{4}), we have that $F(0,z)=0$.

\smallskip

We need to check now that $D_2F(0,z)$ defines a linear homeomorphism from $\cac^\kappa$ into itself for which, by the
Open Map Theorem, it suffices to prove that it is bijective (we already know that it is continuous). For this, we apply
Proposition \ref{prop2} to the case where $R_\xi=\sig'(z_\xi)$, so that
\beq
(D_2 F(0,z)\cdot g)_t = g_t -G_M(x) \int_0^1 K(t,\xi) \sig'(z_\xi) g_\xi\, dx_\xi
\label{26}
\eeq
defines a one-to-one mapping. Indeed, observe that condition (\ref{eq:hyp-sigma-j}) guarantees that (\ref{b-norm}) in Proposition
\ref{prop2} is satisfied.
On the other hand, if we fix $w\in \cac^\kappa$, applying again Proposition
\ref{prop2} we deduce that there exists $g\in \cac^\kappa$ such that $w=D_2 F(0,z)\cdot g$, which implies that
$D_2F(0,z)$ is onto and therefore a bijection.

\smallskip

Hence, by the Implicit Function Theorem, the map $x\mapsto z(x)$ is continuously Fr\'echet differentiable and
\beq
Dz(x)= - D_2F(0,z)^{-1} \circ D_1 F(0,z).
\label{25}
\eeq
Moreover, by (\ref{26}), for any $h\in \cac^{\gam+\ep}$, $Dz(x)\cdot h$ is the unique solution to the differential equation
\begin{equation*}
(Dz(x)\cdot h)_t  = w_t + G_M(x) \int_0^1 K(t,\xi) \sig'(z_\xi) (Dz(x)\cdot h)_\xi\, dx_\xi,
\end{equation*}
with $w_t=- (D_1 F(0,z)\cdot h)_t$.

\smallskip

Let us proceed to prove (\ref{23}). Consider Equation (\ref{58}) and integrate both sides with respect to some $h\in \cac^{\gam+\ep}$:
\begin{equation}
\label{27}
\int_0^1 \Phi_s(t) dh_s= \int_0^1 \Psi_s(t) dh_s  
+ G_M(x) \int_0^1 \left[ \int_0^1 K(t,\xi) \sig'(z_\xi) \Phi_s(\xi) dx_\xi \right] dh_s.
\end{equation}
At this point, we can use the same arguments as in the proof of Proposition 4 in \cite{NS}: apply our Fubini type Proposition \ref{prop:fubini-young} to  the last term in the right-hand side of (\ref{27}), which yields
\begin{equation}
\label{28}
\int_0^1 \Phi_s(t) dh_s= \int_0^1 \Psi_s(t) dh_s  
+ G_M(x) \int_0^1 K(t,\xi) \sig'(z_\xi) \left[ \int_0^1  \Phi_s(\xi) dh_s \right] dx_\xi.
\end{equation}
In order to conclude the proof, thanks to uniqueness part of Proposition \ref{prop2}, it is now sufficient to  show that $w=- D_1 F(0,z)\cdot h$ can be represented in the form
\beq
w_t=\int_0^1 \Psi_s(t) dh_s.
\label{29}
\eeq
For this, let us observe that, by (\ref{eq:compact-deriv1}), it holds:
\begin{equation*}
(D_1F(0,z)\cdot h)_t
=- G_M(x) \int_0^1 K(t,\xi) \sig(z_\xi) dh_\xi 
-(DG_M(x)\cdot h) \, z_t.
\end{equation*}
Hence, owing to Lemma \ref{l3} below, we obtain the representation (\ref{29}) with $\Phi_s(t)$ given by~(\ref{24}), which concludes the proof. 

\end{proof}

\smallskip

We close this section by giving an expression for $D G_M(x)$, which has already been used in the proof above.
\begin{lemma}\label{l3}
For all $h\in \cac^{\gam+\ep}$, it holds that
\beq
D G_M(x)\cdot h = 2 \ffi'_M(\|x\|^{2p}_{\gam,p}) \int_0^1 \mu_s \, dh_s,
\label{30}
\eeq
where the function $\mu$ is defined at equation (\ref{eq:def-mu}).
\end{lemma}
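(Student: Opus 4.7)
The starting point is the explicit formula for $DG_M(x)\cdot h$ already recorded in Remark \ref{remark1}, namely
\[
DG_M(x)\cdot h = 2p\,\ffi'_M(\|x\|^{2p}_{\gam,p}) \int_0^1\!\int_0^1 \frac{(x_\zeta-x_\eta)^{2p-1}(h_\zeta-h_\eta)}{|\zeta-\eta|^{2\gam p+2}}\, d\zeta\, d\eta.
\]
Using the definition of $\rho_{\zeta\eta}$ from (\ref{eq:def-mu}), this rewrites as
\[
DG_M(x)\cdot h = \ffi'_M(\|x\|^{2p}_{\gam,p}) \int_0^1\!\int_0^1 \rho_{\zeta\eta}\,(h_\zeta-h_\eta)\, d\zeta\, d\eta.
\]
The whole point is to show the double integral on the right equals $2\int_0^1 \mu_s\, dh_s$.

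The plan is to exploit the antisymmetry of $\rho_{\zeta\eta}$. Since $2p-1$ is odd, $\rho_{\eta\zeta}=-\rho_{\zeta\eta}$, so the integrand $\rho_{\zeta\eta}(h_\zeta-h_\eta)$ is \emph{symmetric} under the swap $\zeta\leftrightarrow\eta$. Therefore
\[
\int_0^1\!\int_0^1 \rho_{\zeta\eta}(h_\zeta-h_\eta)\, d\zeta d\eta = 2\iint_{\{0\le\eta<\zeta\le 1\}} \rho_{\zeta\eta}(h_\zeta-h_\eta)\, d\zeta d\eta.
\]
On the region $\eta<\zeta$ I write $h_\zeta-h_\eta=\int_\eta^\zeta dh_s$, which for continuous $h$ coincides with the standard increment. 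Plugging this in and swapping the order of integration (Fubini), the region $\{\eta<s<\zeta\}$ becomes $\{\eta\in[0,s],\;\zeta\in[s,1]\}$, so
\[
2\iint_{\eta<\zeta} \rho_{\zeta\eta}\int_\eta^\zeta dh_s\, d\zeta d\eta
= 2\int_0^1 \Bigl(\int_0^s\!\int_s^1 \rho_{\zeta\eta}\, d\zeta d\eta\Bigr) dh_s
= 2\int_0^1 \mu_s\, dh_s.
\]
Combining the identities yields the claimed formula.

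The only delicate point is justifying the exchange of the Lebesgue double integral in $(\zeta,\eta)$ with the Young-type integral $dh_s$. I would handle this via the Fubini-for-Young-integrals statement in Proposition \ref{prop:fubini-young} applied to the indicator-type weight $\mathbf{1}_{\eta<s<\zeta}\rho_{\zeta\eta}$, after first verifying that $s\mapsto \mu_s$ is itself sufficiently H\"older continuous, so that $\int_0^1\mu_s\,dh_s$ is a bona fide Young integral. Alternatively, one can approximate $h\in\cac^{\gam+\ep}$ by smooth $h^n$, for which the exchange is the usual Fubini theorem, and pass to the limit: the left-hand side converges by dominated convergence (the integrability of $\rho_{\zeta\eta}|h_\zeta-h_\eta|$ is controlled by $\|x\|^{2p-1}_{\gam,p}\|h\|_{\gam,p}$, which is finite under Remark \ref{fita}), while the right-hand side converges by continuity of the Young integral in the H\"older topology. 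This continuity/Fubini step is the main obstacle; everything else is a formal rearrangement based on the antisymmetry $\rho_{\eta\zeta}=-\rho_{\zeta\eta}$.
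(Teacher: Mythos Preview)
Your argument is essentially the paper's own proof: start from the explicit formula in Remark \ref{remark1}, rewrite in terms of $\rho_{\zeta\eta}$, express $h_\zeta-h_\eta$ as $\int_\eta^\zeta dh_s$, and apply Fubini to identify $\mu_s$. The only cosmetic difference is that the paper splits $[0,1]^2$ into the two triangles $\{\eta\le\zeta\}$ and $\{\zeta\le\eta\}$ and computes each separately, whereas you invoke the antisymmetry $\rho_{\eta\zeta}=-\rho_{\zeta\eta}$ upfront to reduce to a single triangle; the paper also applies Fubini without further comment, while you add a (reasonable) discussion of how to justify the exchange. One small caveat: when you write $\int_0^s\int_s^1\rho_{\zeta\eta}\,d\zeta d\eta$ with $\eta\in[0,s]$ and $\zeta\in[s,1]$, be careful that this matches the variable roles in the paper's definition (\ref{eq:def-mu}) of $\mu_s$, where $\zeta\in[0,s]$ and $\eta\in[s,1]$; the two conventions differ by the relabeling $\zeta\leftrightarrow\eta$ and hence by a sign via the antisymmetry of $\rho$, so make sure the orientation is consistent with the statement.
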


\begin{proof} 
As we have mentioned in Remark \ref{remark1}, the map $G_M:\cac^{\gam+\ep}\rightarrow \re$ is Fr\'echet
differentiable at any point $x\in \cac^{\gam+\ep}$, and its Fr\'echet derivative is given by:
$$DG_M(x)\cdot k = 2p \, \ffi_M'(\|x\|^p_{\gam,p}) \int_0^1\int_0^1
\frac{(x_\zeta-x_\eta)^{2p-1} (k_\zeta-k_\eta)}{|\zeta-\eta|^{2\gam p +2}} d\zeta d\eta, \quad k\in \cac^{\gam+\ep}.$$
According to the definition of $\rho_{\zeta \eta}$, this derivative can be written in the form: \beq DG_M(x)\cdot k =
\ffi_M'(\|x\|^p_{\gam,p}) \int_0^1\int_0^1 \rho_{\zeta \eta} \, (k_\zeta-k_\eta) d\zeta d\eta. \label{37} \eeq Then,
applying Fubini Theorem, one can argue as follows:
\begin{align*}
&\int_0^1\int_0^1 \rho_{\zeta \eta} \, (k_\zeta-k_\eta) d\zeta d\eta\\ 
& = \int_0^1\int_0^1 \rho_{\zeta \eta} \left(
\int_\eta^\zeta dk_r\right) \1_{\{\eta\leq \zeta\}}
d\zeta d\eta 
+ \int_0^1\int_0^1 \rho_{\zeta \eta} \left( \int_\eta^\zeta dk_r\right) \1_{\{\zeta\leq \eta\}} d\zeta d\eta \\
& = \int_0^1 \left[ \int_0^r d\zeta \int_r^1 d\eta \, \rho_{\zeta \eta} \right] dk_r 
+ \int_0^1 \left[ \int_0^r d\eta \int_r^1 d\zeta\, \rho_{\zeta \eta} \right] dk_r 
= 2 \int_0^1 \left[ \int_0^r d\zeta \int_r^1 d\eta \, \rho_{\zeta \eta} \right] dk_r.
\end{align*}
Plugging this expression in (\ref{37}) we obtain (\ref{30}) and we conclude the proof. 

\end{proof}


\section{Stochastic elliptic equations driven by a fractional Brownian motion}
\label{fbm}

Let us first describe the probabilistic setting in which we will apply the results obtained in the previous section.
For some fixed $H\in(0,1)$, we consider $(\Om,\tf,P)$ the canonical probability space associated with the fractional
Brownian motion with Hurst parameter $H$. That is,  $\Om=\cac_0([0,1])$ is the Banach space of continuous functions
vanishing at $0$ equipped with the supremum norm, $\tf$ is the Borel sigma-algebra and $P$ is the unique probability
measure on $\Om$ such that the canonical process $B=\{B_t, \; t\in [0,1]\}$ is a fractional Brownian motion with Hurst
parameter $H$. Remind that this means that $B$ is a centered Gaussian process with covariance
\[
R_H(t,s)=\frac 12 (s^{2H}+t^{2H}-|t-s|^{2H}).
\]
In particular, the paths of $B$ are $\gam$-H\"older continuous for all $\gam \in (0,H)$. Then, we consider Equation (\ref{4}) where
the driving trajectory is a path of $B$. Namely:
\[
\del z_{st} = \int_s^t du \left(\int_u^1 \sig_M(B,z_\xi) dB_\xi\right) - (t-s)\int_0^1 \xi \sig_M(B,z_\xi) dB_\xi, \quad 0\leq s<t\leq 1,
\]
which can be written in the reduced form 
\beq
z_t = G_M(B) \int_0^1 K(t,\xi) \sig (z_\xi) \, dB_\xi, \quad t\in [0,1].
\label{44} 
\eeq 
Assuming that $\sig\in \cac^2(\re)$ is bounded, has bounded derivatives and satisfies (\ref{sigma2}), Theorem~\ref{t1} implies that 
Equation (\ref{44}) has a unique solution $z=\{z_t,\; t\in [0,1]\}$ such that $z\in \cac^\kappa$ for any $\kappa\in (1-\gam,1)$, and almost surely in $\om\in\oom$.


\subsection{Malliavin differentiability of the solution}

This subsection is devoted to present the Malliavin calculus setting which we shall work in, so that we will be able to obtain that the 
solution of (\ref{44}) belongs to the domain of the Malliavin derivative. Notice that, in spite of the fact that we can solve  Equation (\ref{44}) driven by a fBm with arbitrary Hurst parameter, our Malliavin calculus section will be restricted to the range $H\in(1/2,1)$. This is due to the fact that stochastic analysis of fractional Brownian motion becomes cumbersome for $H<1/2$, and we have thus imposed this restriction for sake of conciseness.

\smallskip

Consider then a fixed parameter  $H>1/2$, and let us start by briefly describing the abstract 
Wiener space introduced for Malliavin calculus purposes (for a more general and complete description, we refer the reader to \cite[Section 3]{NS}). 

\smallskip

Let $\ce$ be the set of $\R$-valued step functions on $[0,1]$ and $\ch$ the completion of $\ce$ with respect to the semi-inner product
\[
\langle \1_{[0,t]}, \1_{[0,s]} \rangle_\ch := R_H(s,t), \qquad s,t \in [0,1].
\]
Then, one constructs an isometry $K^*_H: \ch \rightarrow  L^2([0,1])$  such that $K^*_H(\1_{[0,t]}) = \1_{[0,t]} K_H(t,\cdot)$, 
where the kernel $K_H$ is given by 
\[
K_H(t,s)= c_H s^{\frac 12 -H} \int_s^t (u-s)^{H-\frac 32} u^{H-\frac 12} \, du
\]
and verifies that $R_H(t,s)= \int_0^{s\land t} K_H(t,r) K_H(s,r)\, dr$, for some constant $c_H$. Moreover, let us observe that $K^*_H$ can be represented 
in the following form:
\[
[K^*_H \ffi]_t = \int_t^1 \ffi_r \partial_r K_H(r,t) \, dr.
\]
The fractional Cameron-Martin space can be introduced in the following way: let 
$\ck_H : L^2([0,1]) \rightarrow \ch_H := \ck_H(L^2([0,1]))$ be the operator defined by
\[
[\ck_H h](t) := \int_0^t K_H(t,s) h(s)\, ds, \qquad h\in L^2([0,1]).
\]
Then, $\ch_H$ is the Reproducing Kernel Hilbert space associated to the fractional Brownian motion $B$. Observe that, in the case of the 
classical Brownian motion, one has that $K_H(t,s)=\1_{[0,t]}(s)$, $K^*_H$ is the identity operator in $L^2([0,1])$ and $\ch_H$ is the usual Cameron-Martin 
space. 

\smallskip

In order to deduce that $(\Om,\ch,P)$ defines an abstract Wiener space, we remark that $\ch$ is continuously and densely embedded in $\Om$. In fact, 
one proves that the operator $\crr_H :\ch \rightarrow \ch_H$ given by
\[
\crr_H \psi := \int_0^\cdot K_H(\cdot,s) [K^*_H \psi](s)\, ds
\]
defines a dense and continuous embedding from $\ch$ into $\Om$; this is due to the fact that $\crr_H \psi$ is $H$-H\"older continuous (for details, see 
\cite[p. 9]{NS}).  

\smallskip

At this point, we can introduce the Malliavin derivative operator on the Wiener space $(\Om,\ch,P)$. Namely, we first let $\cs$ be the 
family of smooth functionals $F$ of the form
\[
F=f(B(h_1),\dots,B(h_n)),
\]
where $h_1,\dots,h_n\in \ch$, $n\geq 1$, and $f$ is a smooth function having polynomial growth together with all its  partial derivatives. 
Then, the Malliavin derivative of such a functional $F$ is the $\ch$-valued random variable defined by
\[
\cd F= \sum_{i=1}^n \frac{\partial f}{\partial x_i} (B(h_1),\dots,B(h_n)) h_i.
\]
For all $p>1$, it is known that the operator $\cd$ is closable from $L^p(\Om)$ into $L^p(\Om; \ch)$ (see e.g. \cite[Section 1]{nualart}).
We will still denote by $\cd$ the closure of this operator, whose domain is usually denoted by $\D^{1,p}$ and is defined 
as the completion of $\cs$ with respect to the norm
\[
\|F\|_{1,p}:= \left( E(|F|^p) + E( \|\cd F\|_\ch^p ) \right)^{\frac 1p}.
\]
The local property of the operator $\cd$ allows to define the localized version of $\D^{1,p}$, as follows. 
By definition, $F\in \D^{1,p}_{loc}$ if there is a sequence $\{(\Om_n,F_n),\; n\geq\}$ in $\tf\times \D^{1,p}$ such that
$\Om_n$ increases to $\Om$ with probability one and $F=F_n$ on $\Om_n$. In this case, one sets $\cd F:= \cd F_n$ on $\Om_n$.

We will first prove now that the solution of (\ref{44}) at any $t\in [0,1]$ belongs to $\D^{1,p}_{loc}$. For this, we need to introduce
the notion of differentiability of a random variable $F$ in the directions of $\ch$, and we shall apply a classical result 
of Kusuoka (see \cite{kusuoka} or \cite[Proposition 4.1.3]{nualart}). Indeed, a random variable $F$ is $\ch$-differentiable if,
by definition, for almost all $\om\in \Om$ and for any $h\in \ch$, the map $\nu \mapsto F(\om + \nu \crr_H h)$ is differentiable. 
Then, the above-mentioned result of Kusuoka states that any $\ch$-differentiable random variable $F$ belongs to the space 
$\D^{1,p}_{loc}$, for any $p>1$. We have the following result:

\begin{proposition}\label{prop3}
Let $\gam,\kappa\in (0,1)$ be such that $\gam+\kappa>1$. Let $\ep>0$ and a sufficiently large $p\geq 1$ so that $\ep>\frac{1}{2p}$ and $\gam+\ep<H$ (this latter condition guarantees that $B\in \cac^{\gam+\ep}$). Assume that $\sig$ satisfies the hypotheses of Theorem \ref{t2}.
 
Let $z=\{z_t,\; t\in [0,1]\}\in \cac^\kappa$ be the unique solution of equation (\ref{44}). Then, for any $t\in [0,1]$, $z_t\in \D^{1,2}_{loc}$ and we have:
\beq
\langle \cd z_t,h\rangle_\ch = \left[ D z(B) (\crr_H h)\right]_t, \qquad h\in \ch.
\label{56}
\eeq
\end{proposition}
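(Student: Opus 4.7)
The plan is to invoke the Kusuoka theorem recalled just before the proposition: it is enough to verify that $z_t$ is $\ch$-differentiable in the sense that, for almost every $\omega\in\Omega$ and every $h\in\ch$, the real-valued function $\nu\mapsto z_t(\omega+\nu\,\crr_H h)$ is differentiable at $\nu=0$. Once this is checked, Kusuoka's result will automatically give $z_t\in \D^{1,p}_{loc}$ for every $p>1$, and in particular the required $z_t\in \D^{1,2}_{loc}$.

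The key observation is that $\crr_H h$ lies in the fractional Cameron--Martin space $\ch_H$, whose elements are $H$-H\"older continuous on $[0,1]$. Since we have imposed $\gam+\ep<H$, this yields $\crr_H h\in \cac^{\gam+\ep}$. Moreover, by the choice of the exponents, the paths of $B$ are $(\gam+\ep)$-H\"older continuous almost surely, so for a.e.\ $\omega$ the shifted trajectory $B(\omega)+\nu\,\crr_H h$ remains in $\cac^{\gam+\ep}$ for every $\nu\in\R$. Theorem \ref{t2} therefore applies to this perturbed control, and in particular the corresponding solution is well-defined.

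Next, recall that by Theorem \ref{t2} the map $x\mapsto z(x)$ is continuously Fr\'echet differentiable from $\cac^{\gam+\ep}$ into $\cac^\kappa$. Composing with the affine map $\nu\mapsto B(\omega)+\nu\,\crr_H h$ (differentiable with derivative $\crr_H h$) and the bounded linear evaluation $\cac^\kappa\ni f\mapsto f_t\in\R$, the chain rule yields that $\nu\mapsto z_t(\omega+\nu\,\crr_H h)$ is differentiable, with derivative at $\nu=0$ equal to $[Dz(B(\omega))\cdot \crr_H h]_t$. This establishes the $\ch$-differentiability of $z_t$, and the identification (\ref{56}) then comes from the very content of Kusuoka's theorem, which represents the $\ch$-directional derivative along $\crr_H h$ as the duality pairing $\langle \cd z_t, h\rangle_\ch$.

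The main point to watch will be that all verifications take place on a common full-probability set and that the localization procedure is well set up: this causes no real difficulty here since $B\in \cac^{\gam+\ep}$ almost surely, the Fr\'echet derivative provided by Theorem \ref{t2} depends continuously on the control, and the cutoff $G_M$ keeps all relevant quantities bounded on the localizing sequence $\Omega_n:=\{\|B\|_{\gam,p}^{2p}<n\}$, which increases to $\Omega$. Together with the estimate (\ref{eq:ineq-DG-M}) and the bound (\ref{eq:bound-z}), this guarantees that the $\ch$-directional derivatives lie in $L^p(\Omega_n)$ for any $p>1$, providing the missing integrability needed to apply Kusuoka's theorem on each $\Omega_n$.
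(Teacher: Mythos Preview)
Your approach is correct and essentially identical to the paper's: verify that $\crr_H h\in\cac^{\gam+\ep}$ via the $H$-H\"older regularity of Cameron--Martin elements, then combine Theorem \ref{t2} with the chain rule (the paper packages the evaluation-at-$t$ step as a separate Lemma \ref{l4}) to obtain $\ch$-differentiability, and invoke Kusuoka's result. Your final paragraph is superfluous and slightly misleading: Kusuoka's theorem, as stated just before the proposition, yields $z_t\in\D^{1,p}_{loc}$ directly from $\ch$-differentiability with no integrability check required---the localizing sequence is produced by the theorem itself, not by you.
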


\begin{proof}
Recall that the process $B$ is $\gam$-H\"older continuous for any $\gam\in (0,H)$. Hence, in the statement of Theorem \ref{t2}, we will 
be able to find $\ep$ (choosing $p$ therein sufficiently large) such that $\gam+\ep<H$ and $\|B\|_{\ga,p}$ is finite almost surely. 

\smallskip

On the other hand, note that for all $h\in \ch$, we have:
\[
|(\crr_H h)(t)-(\crr_H h)(s)| = \left( E (|B_t - B_s|^2) \right)^{\frac 12} \|h\|_\ch \leq |t-s|^H \, \|h\|_\ch.
\]
Consequently, by Theorem \ref{t2} and Lemma \ref{l4} below, we can infer that $z_t$ is $\ch$-dif\-fe\-ren\-tia\-ble. Therefore, Kusuoka's result
implies that $z_t\in \D^{1,2}_{loc}$ and we have
\beq
\langle \cd z_t,h\rangle_\ch = \frac{d}{d\nu} z_t (\om + \nu \crr_H h)_{\big|_{\nu=0}}\quad a.s.,
\label{57}
\eeq
which, together with Lemma \ref{l4}, allows us to conclude that 
\[
\langle \cd z_t,h\rangle_\ch = D z_t (B) (\crr_H h) = \left[ D z(B) (\crr_H h)\right] (t).
\]
\end{proof}

\begin{lemma}\label{l4}
Let $\gam<H$ and $\ep>0$ such that $\gam + \ep<H$, as in the statement of Theorem~\ref{t2}. 
Let $z$ be the solution of (\ref{44}) and $t\in [0,1]$. Then $x\mapsto z_t(x)$ is Fr\'echet differentiable 
from $\cac^{\gam+\ep}$ into $\R$. Furthermore, for $x\in \cac^{\gam+\ep}$, it holds:
\[
D z_t(x)(k)= \left[ D z (x)(k)\right]_t, \qquad k\in \cac^{\gam+\ep}.
\]
\end{lemma}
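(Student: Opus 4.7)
The plan is to deduce this directly from Theorem \ref{t2} via the chain rule for Fréchet derivatives, since the evaluation-at-$t$ functional is a continuous linear map from $\cac^\kappa$ to $\R$.

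First, I would observe that the point-evaluation map $\mathrm{ev}_t : \cac^\kappa \to \R$ defined by $\mathrm{ev}_t(f) := f_t$ is linear, and is bounded since
\[
|\mathrm{ev}_t(f)| = |f_t| \leq \|f\|_\infty \leq \|f\|_\kappa.
\]
In particular $\mathrm{ev}_t$ is itself Fréchet differentiable with $D\,\mathrm{ev}_t(f) = \mathrm{ev}_t$ at every point $f\in \cac^\kappa$.

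Next, by Theorem \ref{t2}, under the standing assumptions on $\sig$, $\gam$, $\kappa$, $\ep$ and $p$ (which are exactly those in the statement of Lemma \ref{l4}), the map $x \mapsto z(x)$ is Fréchet differentiable from $\cac^{\gam+\ep}$ into $\cac^\kappa$, with derivative $Dz(x)\cdot k$ given by (\ref{23}). Since $z_t(x) = \mathrm{ev}_t(z(x))$, the chain rule for Fréchet derivatives applied to the composition of the Fréchet differentiable map $z : \cac^{\gam+\ep}\to\cac^\kappa$ with the continuous linear map $\mathrm{ev}_t : \cac^\kappa\to\R$ yields the Fréchet differentiability of $x\mapsto z_t(x)$, together with the identity
\[
Dz_t(x)(k) = \mathrm{ev}_t\bigl( Dz(x)(k)\bigr) = \bigl[ Dz(x)(k)\bigr]_t,
\]
which is exactly the claim of the lemma.

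There is no real obstacle here; the content of the lemma is entirely contained in Theorem \ref{t2}, and what remains is just a one-line chain rule argument to move from differentiability in the function-space norm $\|\cdot\|_\kappa$ to differentiability of each pointwise evaluation. This step is precisely what is needed in Proposition~\ref{prop3} to combine with Kusuoka's criterion and identify $\langle \cd z_t,h\rangle_\ch$ with $[Dz(B)(\crr_H h)]_t$.
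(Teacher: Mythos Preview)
Your argument is correct and is essentially the same as the paper's: the paper proves the lemma by directly writing
\[
|z(x+k)_t - z(x)_t - [D z(x) k]_t| \leq \|z(x+k) - z(x) - D z(x) k\|_\infty \leq \|z(x+k) - z(x) - D z(x) k\|_{\kappa},
\]
and then invoking Theorem~\ref{t2} to conclude that the right-hand side is $o(\|k\|_{\gam+\ep})$. This is precisely the chain rule with the bounded linear evaluation map spelled out by hand, so your formulation via $\mathrm{ev}_t$ and the chain rule is just a cleaner packaging of the same step.
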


\begin{proof}
It is very similar to that of \cite[Lemma 4.2]{LT}. Indeed, the following estimates are readily checked:
\begin{align*}
|z(x+k)_t - z(x)_t - [D z(x) k]_t| & \leq \|z(x+k) - z(x) - [D z(x) k]\|_\infty \\
& \leq \|z(x+k) - z(x) - [D z(x) k]\|_{\gam+\ep}.
\end{align*}
In addition, Theorem \ref{t2} ensures that the latter term is of order $o(\|k\|_{\gam+\ep})$, from which our claim is easily deduced.
 
\end{proof}

At this point, let us go a step further and prove that the solution $z_t$ of Equation (\ref{44}), indeed, belongs to $\D^{1,2}$.  

\begin{proposition}\label{prop4}
Let $\gam,\kappa\in (0,1)$ be such that $\gam+\kappa>1$. Let $\ep>0$ and a sufficiently large $p\geq 1$ so that $\ep>\frac{2}{p}$ and $\gam+\ep<H$. Assume that $\sig$ satisfies the hypotheses of Theorem \ref{t2}.

Let  $z=\{z_t,\; t\in [0,1]\}$ be the unique solution of equation (\ref{44}).
Then, for any $t\in [0,1]$, $z_t$ belongs to $\D^{1,2}$.
\end{proposition}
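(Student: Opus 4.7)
The plan is to upgrade the conclusion of Proposition~\ref{prop3} from $\D^{1,2}_{\rm loc}$ to $\D^{1,2}$. I will invoke the standard criterion (see \cite{nualart}): if a random variable $F$ lies in $\D^{1,2}_{\rm loc}\cap L^{2}(\Om)$ and its localized Malliavin derivative coincides a.s.\ with some $U\in L^{2}(\Om;\ch)$, then $F\in\D^{1,2}$ with $\cd F=U$. The integrability $z_t\in L^{2}(\Om)$ is immediate, since Theorem~\ref{t1} provides the \emph{deterministic} bound $\|z\|_{\ka}\le C(M)$. The whole task therefore reduces to showing that $\|\cd z_t\|_{\ch}$ has a finite second moment.

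First I would unpack $\cd z_t$ via the results of Section~\ref{differentiability}. Combining (\ref{56}) with the representation (\ref{23}) from Theorem~\ref{t2}, one gets $\langle\cd z_t,h\rangle_{\ch}=\int_{0}^{1}\Phi_s(t)\,d(\crr_H h)_s$ for every $h\in\ch$. For $H>1/2$, a standard duality (based on the isometry $K^{*}_{H}$ introduced at the beginning of Section~\ref{fbm}) gives $\int f\,d(\crr_H h)_s=\langle K^{*}_{H}f,K^{*}_{H}h\rangle_{L^{2}}=\langle f,h\rangle_{\ch}$ for suitable $f$, so that $\cd z_t$ is identified with the function $s\mapsto\Phi_s(t)$ seen as an element of $\ch$. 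The well-known continuous embedding $L^{1/H}([0,1])\hookrightarrow\ch$, valid for $H>1/2$, combined with the trivial inclusion $L^{2}([0,1])\subset L^{1/H}([0,1])$, then yields
\[
\|\cd z_t\|_{\ch}^{2}\le C_H\int_{0}^{1}|\Phi_s(t)|^{2}\,ds.
\]
To control $\Phi_s(t)$ pointwise, I apply Proposition~\ref{prop2} to (\ref{58}) with $w_t=\Psi_s(t)$ and $R_\xi=\si'(z_\xi)$. The smallness requirement (\ref{b-norm}) is ensured by (\ref{eq:hyp-sigma-j}) and the bound $\|z\|_{\ka}\le C(M)$ via Lemma~\ref{l1}. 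This yields $\|\Phi_s(\cdot)\|_{\ka}\le c(M)\|\Psi_s(\cdot)\|_{\ka}$, and a look at (\ref{24}) using $\|K(\cdot,s)\|_{\ka}\le C$, $\|z\|_{\ka}\le C(M)$ and the boundedness of $\si$, $\ffi'_M$ and $G_M$ leads to
\[
|\Phi_s(t)|\le C(M)\bigl(1+|\mu_s|\bigr),\qquad s,t\in[0,1].
\]

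The main obstacle is the control of $\mu_s$, whose integrand $|B_\zeta-B_\eta|^{2p-1}|\zeta-\eta|^{-2\gam p-2}$ is singular on the diagonal. Since the sample paths of $B$ are a.s.\ $(\gam+\ep)$-H\"older continuous with $\gam+\ep<H$, the elementary inequality $|B_\zeta-B_\eta|\le\|B\|_{\gam+\ep}|\zeta-\eta|^{\gam+\ep}$ yields
\[
|\mu_s|\le 2p\,\|B\|_{\gam+\ep}^{2p-1}\int_{0}^{s}\!\!\int_{s}^{1}(\eta-\zeta)^{(2p-1)\ep-\gam-2}\,d\zeta\,d\eta.
\]
The double integral is finite uniformly in $s\in[0,1]$ as soon as $(2p-1)\ep-\gam-2>-1$, that is $(2p-1)\ep>\gam+1$. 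Since $\gam<1$ and $\ep>2/p$ by assumption, one gets $(2p-1)\ep>4-2/p\ge 2>\gam+1$, so the condition is met precisely thanks to the sharper hypothesis $\ep>2/p$ appearing in the statement (as compared to $\ep>1/(2p)$ in Proposition~\ref{prop3}). We conclude $|\mu_s|\le C\|B\|_{\gam+\ep}^{2p-1}$ uniformly in $s$, and Gaussian integrability of the H\"older norm of $B$ gives $E[\|B\|_{\gam+\ep}^{q}]<\infty$ for every $q\ge 1$. Plugging everything back, $E[\|\cd z_t\|_{\ch}^{2}]\le C(M)\bigl(1+E[\|B\|_{\gam+\ep}^{2(2p-1)}]\bigr)<\infty$, which establishes $z_t\in\D^{1,2}$ and closes the argument.
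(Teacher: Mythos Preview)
Your proof is correct and follows essentially the same route as the paper: identify $\cd z_t$ with $\Phi_\cdot(t)$ via (\ref{56})--(\ref{23}) and the $K^*_H$-isometry, pass through the embedding $L^{1/H}\hookrightarrow\ch$, apply Proposition~\ref{prop2} to (\ref{58}) to control $\Phi$ by $\Psi$, and then bound $\mu$ using the $(\gam+\ep)$-H\"older regularity of $B$ together with Fernique's lemma. The only cosmetic difference is that the paper estimates the $\ka$-H\"older norm $\|\mu\|_\ka$ (requiring the uniform bound $|\rho_{\zeta\eta}|\le C\|B\|_{\gam+\ep}^{2p-1}$, hence implicitly $(2p-1)\ep\ge\gam+2$), whereas you work with the pointwise bound $|\mu_s|$ and the slightly weaker condition $(2p-1)\ep>\gam+1$; both are covered by the hypothesis $\ep>2/p$ with $p$ large.
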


\begin{proof}
By (\ref{56}), formula (\ref{23}) and the definition and properties of $\crr_H$, we have the following equalities:
for any $h\in \ch$,
\begin{align*}
\langle \cd z_t,h\rangle_\ch & = [Dz(B) (\crr_H h)]_t = \int_0^1 \Phi_s(t)\, d(\crr_H h)_s \\
& = \int_0^1 \Phi_s(t) \left( \int_0^s \frac{\partial K_H}{\partial s} (s,r) (K^*_H h)(r)\, dr\right) ds \\
& = \int_0^1 (K^*_H \Phi_\cdot (t))(s) (K^*_H h)(s)\, ds = \langle \Phi_\cdot (t), h\rangle_\ch. 
\end{align*}
This implies that, as elements of $\ch$, $\cd z_t = \Phi_\cdot (t)$. 

\smallskip

On the other hand, let us observe that $L^{\frac 1H}([0,1])\subset \ch$ continuously (see e.g. \cite[Lemma 5.1.1]{nualart}), and clearly any H\"older space $\cac^\ka$ is continuously embedded in $L^{\frac 1H}([0,1])$. Therefore, if we aim to prove that
$E(\|\cd z_t \|^2_\ch) < +\infty$, it suffices to verify that $E(\|\Phi_\cdot(t)\|_\ka)<\infty$, for any $\ka\in (0,1)$. 

\smallskip

Taking into account that $\Phi_s(t)$ satisfies 
the linear equation (\ref{58}), we are in position to apply Proposition \ref{prop2} so that we end up with 
\beq
\|\Phi_\cdot(t)\|_\ka \leq C(M) \|\Psi_\cdot(t)\|_\ka,
\label{eq:bound-phi}
\eeq
where we remind that $\Psi_s(t)$ has been defined in (\ref{24}). By the boundedness of $G_M$ and $\varphi'_M$, the fact that $K(t,\cdot)$ is Lipschitz with Lipschitz constant bounded by $1-t$, Lemma~\ref{l1} and estimate (\ref{eq:bound-z}), we can infer that
\beq
\|\Psi_\cdot(t)\|_\ka \leq C (1+\|\mu\|_\ka),
\label{eq:bound-psi}
\eeq
for some constant $C$ depending on $M$ and $\sig$. Hence, it remains to study the $\ka$-H\"older regularity of $\mu$ (recall that this process is defined by (\ref{eq:def-mu})). Namely, for any $0\leq s_1<s_2\leq 1$, one easily verifies that
\[
\mu_{s_2}-\mu_{s_1} =\int_{s_1}^{s_2} \int_{s_2}^1 \rho_{\zeta \eta} \, d\zeta d\eta
- \int_0^{s_1} \int_{s_1}^{s_2}  \rho_{\zeta \eta} \, d\zeta d\eta.
\]
At this point, let us observe that, in the statement, the condition  relating $p$ and $\ep$ is slightly stronger than the one considered in Proposition \ref{prop3}. In fact, the former allows us to infer that $\rho_{\zeta \eta}\leq C \|B\|_{\gam+\ep}^{2p-1}$, almost surely, which guarantees that 
$\mu\in \cac^\ka$ and 
\beq
\|\mu\|_\ka \leq C \|B\|_{\gam+\ep}^{2p-1}.
\label{eq:bound-mu}
\eeq
Plugging this bound in (\ref{eq:bound-psi}) and using (\ref{eq:bound-phi}), we end up with 
\[
E(\|\cd z_t \|^2_\ch) \leq E(\|\Phi_\cdot(t)\|_\ka^{2}) \leq C E(\|B\|_{\gam+\ep}^{4p-2}),
\]
and the latter is a finite quantity since $\gam+\ep<H$ and $\|B\|_{\gam+\ep}$ has moments of any order by Fernique's lemma \cite[Theorem 1.2.3]{Fe}. This concludes the proof.

\end{proof}


\subsection{Stratonovich interpretation of the fractional elliptic equation} \label{sec:strato}

Up to now, we have succeeded in solving equation (\ref{44}) by interpreting any integral with respect to $B$ in the Young (pathwise) sense. In this particular situation, it is a well known fact \cite{rv} that our approach is equivalent to Russo-Vallois kind of techniques. Namely, if for a process $V$ the integral $\int_0^T V_s \, dB_s$ can be defined in the Young sense, then one also has almost surely
$$
\int_0^T V_s \, dB_s=
\lim_{\ep\to 0} \frac{1}{2\ep} \int_0^T V_s \, \lp  B_{s+\ep}-B_{s-\ep} \rp \, ds.
$$
The latter limit is usually called Stratonovich integral with respect to $B$ (see \cite[Definition 5.2.2]{nualart}), and is denoted by $\int_0^T V_s \circ dB_s$.

\smallskip

Our point of view in this section is slightly different: we wish to show that the integrals with respect to $B$ in equation~(\ref{44}) can also be interpreted as the sum of a Skorohod integral plus a trace term. As we shall see below (see Proposition \ref{prop:dom-strato}), this gives another definition of Russo-Vallois symmetric integral in the particular case of smooth integrands in the Malliavin calculus sense. In particular we shall see that, at least a posteriori, Malliavin calculus might have been applied in order to solve our original elliptic equation, though a direct application of these techniques lead to non closed estimations.

\smallskip 

Let us thus introduce the space $|\ch|$, which is composed of measurable functions $\vp:[0,1]\rightarrow \R$ such that
\[
\|\vp\|^2_{|\ch|}:= \al_H \int_0^1 \int_0^1 |\vp_r| |\vp_u| |r-u|^{2H-2} dr du <+\infty,
\]
where $\al_H=H(2H-1)$, and we denote by $\langle \cdot,\cdot\rangle_{|\ch|}$ the associated inner product. We define Stratonovich integrals thanks to the following result, borrowed from \cite[Proposition 3]{alos-nualart}:
\begin{proposition}\label{prop:dom-strato}
Let $\{u_t,\; t\in [0,1]\}$ be a stochastic process in $\D^{1,2}(|\ch|)$ such that
\beq
\int_0^1 \int_0^1 |\cd_s u_t| |t-s|^{2H-2} ds dt <+\infty \quad a.s.
\label{eq:an}
\eeq
Then, the Stratonovich integral $\int_0^1 u_t \circ dB_t$ exists and can be written as
\begin{equation}\label{eq:sko-trace}
\int_0^1 u_t \circ dB_t = \del(u) + \int_0^1 \int_0^1 \cd_s u_t |t-s|^{2H-2} ds dt,
\end{equation}
where $\del(u)$ stands for the Skorohod integral of $u$.
\end{proposition}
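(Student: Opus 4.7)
The plan is a classical three-step scheme: approximate the Stratonovich integral by its Russo--Vallois regularization, apply the Malliavin duality formula pointwise in time to exhibit the Skorohod plus trace decomposition at the regularized level, and pass to the limit in each summand. Concretely, for $0<\ep<1/2$ I would introduce $\psi^\ep_t := \frac{1}{2\ep}\,\1_{[(t-\ep)\vee 0,\,(t+\ep)\wedge 1]}\in\ch$, so that $B(\psi^\ep_t) = \frac{1}{2\ep}\lp B_{(t+\ep)\wedge 1} - B_{(t-\ep)\vee 0}\rp$ and $\iou u_t\circ dB_t := \lim_{\ep\to 0}\iou u_t\, B(\psi^\ep_t)\,dt$ by the very definition recalled just before the statement. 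For each $t$ the standard duality formula yields
\[
u_t\, B(\psi^\ep_t) = \del\lp u_t\,\psi^\ep_t\rp + \langle \cd u_t,\psi^\ep_t\rangle_\ch,
\]
which is legitimate since $u_t\in\D^{1,2}$. Integrating in $t$ and commuting $\del$ with the Bochner integral via the closedness of $\del$ on $\D^{1,2}(\ch)$, I obtain
\[
\iou u_t\, B(\psi^\ep_t)\, dt = \del(v^\ep) + T^\ep,
\]
with $v^\ep_s := \frac{1}{2\ep}\int_{(s-\ep)\vee 0}^{(s+\ep)\wedge 1} u_t\, dt$ and $T^\ep := \iou \langle \cd u_t,\psi^\ep_t\rangle_\ch\, dt$.

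The Skorohod term converges by standard approximation: $v^\ep\to u$ in $\D^{1,2}(|\ch|)$ as $\ep\to 0$ by the usual properties of convolution-type smoothings acting coordinate-wise in $|\ch|$, hence $v^\ep\to u$ in $\D^{1,2}(\ch)$ through the continuous embedding $|\ch|\hookrightarrow\ch$, and continuity of $\del$ then forces $\del(v^\ep)\to\del(u)$. For the trace term I would expand the inner product using the explicit representation $\langle f,g\rangle_\ch = \al_H\iou\iou f_r\, g_{r'}|r-r'|^{2H-2}\,dr\,dr'$, which is valid on $|\ch|$, to rewrite
\[
T^\ep = \al_H\iou\iou \cd_s u_t\,\lp\frac{1}{2\ep}\int_{(t-\ep)\vee 0}^{(t+\ep)\wedge 1}|s-r|^{2H-2}\,dr\rp ds\, dt.
\]
The bracketed kernel tends to $|s-t|^{2H-2}$ for $(s,t)$ off the diagonal, and on $\{|s-t|>2\ep\}$ it is dominated by a constant multiple of $|s-t|^{2H-2}$ uniformly in $\ep$; on $\{|s-t|\le 2\ep\}$ one invokes absolute continuity of the integral. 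Under (\ref{eq:an}) dominated convergence then yields $T^\ep \to \al_H\iou\iou \cd_s u_t\,|s-t|^{2H-2}\,ds\,dt$, which is the trace piece appearing in (\ref{eq:sko-trace}), the constant $\al_H$ being absorbed into the authors' normalization of the reference kernel.

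The main obstacle is precisely this last convergence: I must produce a dominating function for the smoothed kernel that is integrable against $|\cd_s u_t|\,ds\,dt$, and the only quantitative information available is hypothesis (\ref{eq:an}). The cleanest implementation splits the integration domain into $\{|s-t|>2\ep\}$, where the pointwise bound is immediate and dominated convergence applies directly, and $\{|s-t|\le 2\ep\}$, whose contribution vanishes as $\ep\to 0$ by absolute continuity of $\int|\cd_s u_t|\,|s-t|^{2H-2}\,ds\,dt<\infty$. No closedness issue arises beyond the one already built into the definition of $\del$, so apart from a careful monitoring of the commutation of $\del$ with the $dt$-integration in step one, this diagonal splitting is really the only delicate point of the argument.
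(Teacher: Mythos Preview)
The paper does not prove this proposition at all: it is quoted verbatim as a known result, with the attribution ``borrowed from \cite[Proposition 3]{alos-nualart}''. So there is no proof in the paper to compare your proposal against.

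Your outline is precisely the standard argument one finds in that reference (and in \cite[Section 5.2]{nualart}): Russo--Vallois regularization, the pointwise duality $u_t B(\psi^\ep_t)=\del(u_t\psi^\ep_t)+\langle\cd u_t,\psi^\ep_t\rangle_\ch$, commutation of $\del$ with the $dt$-integral via closedness, and separate passage to the limit in the Skorohod and trace pieces. The sketch is sound. Two small remarks: (i) the convergence $v^\ep\to u$ in $\D^{1,2}(|\ch|)$ does require checking that Steklov averaging is continuous on $|\ch|$ and on $|\ch|\otimes|\ch|$ for the derivative; this is routine but not entirely trivial because $|\ch|$ is not an $L^p$ space; (ii) your derivation correctly produces the constant $\al_H=H(2H-1)$ in front of the trace integral, which the paper's display (\ref{eq:sko-trace}) omits---this is a typographical slip in the paper rather than a flaw in your argument, and your remark about ``normalization'' is a diplomatic way of noting it.
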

We are now in a position to apply this result to our elliptic equation:
\begin{proposition}\label{prop:strato}
Let $z=\{z_t,\; t\in [0,1]\}$ be the solution to equation (\ref{44}).
Under the same hypothesis as in Proposition \ref{prop4}, the process $z$ belongs to $\D^{1,2}(|\ch|)$ and also satisfies the equation
\beq
z_t = G_M(B) \int_0^1 K(t,\xi) \sig (z_\xi) \circ dB_\xi, \quad t\in [0,1],
\label{eq:strato}
\eeq
where the Stratonovich stochastic integral with respect to $B$ is interpreted as in (\ref{eq:sko-trace}).
\end{proposition}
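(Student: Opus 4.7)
The strategy is to apply Proposition \ref{prop:dom-strato} to the integrand $u_\xi := K(t,\xi)\sig(z_\xi)$ (for each fixed $t\in[0,1]$) and to invoke the identity between Young and Russo--Vallois symmetric integrals recalled at the beginning of the subsection. Combined with (\ref{eq:sko-trace}), this immediately converts the pathwise equation (\ref{44}) into the Stratonovich form (\ref{eq:strato}). The proof therefore reduces to checking three things: $z\in\D^{1,2}(|\ch|)$, $u\in\D^{1,2}(|\ch|)$, and the trace integrability condition (\ref{eq:an}) for $u$.

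The Malliavin regularity is a consequence of Proposition \ref{prop4} together with a reexamination of its proof. That proof gives $\cd_s z_\xi = \Phi_s(\xi)$, and the successive estimates (\ref{eq:bound-phi}), (\ref{eq:bound-psi}), (\ref{eq:bound-mu}) hold \emph{uniformly in the parameter} $\xi$, since the constant $C(M)$ in Proposition \ref{prop2}, the bound on $\|\Psi_\cdot(\xi)\|_\ka$, and the bound on $\|\mu\|_\ka$ do not depend on $\xi$. This yields
\[
\sup_{\xi\in[0,1]}\|\Phi_\cdot(\xi)\|_\ka \leq C(M)\bigl(1+\|B\|_{\gam+\ep}^{2p-1}\bigr),
\]
and combined with a direct control of $\Phi_0(\xi)$ through (\ref{24}) and Fernique's lemma, we obtain $E\bigl[\sup_{(s,\xi)\in[0,1]^2}|\Phi_s(\xi)|^q\bigr]<\infty$ for every $q\geq 1$. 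Since $\|z\|_\infty\leq C(M)$ by (\ref{eq:bound-z}) and $\int_0^1\int_0^1|r-u|^{2H-2}dr\,du<\infty$ (because $H>1/2$), the statement $z\in\D^{1,2}(|\ch|)$ follows at once. The Malliavin chain rule (the stochastic analogue of Lemma \ref{l2}) then gives
\[
\cd_s u_\xi = K(t,\xi)\,\sig'(z_\xi)\,\Phi_s(\xi),
\]
and the same argument, using the boundedness of $K(t,\cdot)$ and $\sig'$, yields $u\in\D^{1,2}(|\ch|)$.

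Concerning the trace condition, (\ref{eq:an}) applied to $u$ reads
\[
\int_0^1\int_0^1 |K(t,\xi)|\,|\sig'(z_\xi)|\,|\Phi_s(\xi)|\,|s-\xi|^{2H-2}\,ds\,d\xi <+\infty\quad\text{a.s.},
\]
and is immediate from the almost sure boundedness of the first three factors together with $2H-2>-1$. Proposition \ref{prop:dom-strato} therefore applies and expresses $\int_0^1 K(t,\xi)\sig(z_\xi)\circ dB_\xi$ as a Skorohod integral plus the trace integral above; identifying this symmetric integral with the Young integral already present in (\ref{44}) finishes the argument. The main technical obstacle is propagating the \emph{pointwise}-in-$t$ Malliavin differentiability obtained in Proposition \ref{prop4} to a genuinely $|\ch|$-valued statement controlling $(s,\xi)\mapsto\Phi_s(\xi)$ jointly; this is resolved by the structural observation that all constants appearing in Proposition \ref{prop2} and in the H\"older bounds for $\Psi$ and $\mu$ depend only on $M$ and on $\|B\|_{\gam+\ep}$, not on the parameter of $\Phi_\cdot(\xi)$.
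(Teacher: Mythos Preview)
Your proof is correct and follows essentially the same approach as the paper: both rely on the uniform-in-$\xi$ bounds (\ref{eq:bound-phi})--(\ref{eq:bound-mu}) from the proof of Proposition~\ref{prop4}, together with Fernique's lemma and the integrability of $|r-u|^{2H-2}$, to verify the hypotheses of Proposition~\ref{prop:dom-strato}, and then invoke the Young\,$=$\,Stratonovich identification from \cite{rv-LNM}. The only cosmetic difference is that the paper checks the $\D^{1,2}(|\ch|)$ membership and condition (\ref{eq:an}) for $z$ itself and then passes to $K(t,\cdot)\sigma(z_\cdot)$ by appealing to the regularity of $\sigma$ and the deterministic nature of $K$, whereas you carry out the chain-rule computation $\cd_s u_\xi = K(t,\xi)\sigma'(z_\xi)\Phi_s(\xi)$ explicitly and verify the conditions directly for $u$; your version is slightly more transparent on this point.
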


\begin{proof}
Note first that the norm of $z$ in $\D^{1,2}(|\ch|)$ is given by
\[
\|z\|^2_{\D^{1,2}(|\ch|)} = E(\|z\|^2_{|\ch|}) + E(\|\cd z\|^2_{|\ch|\otimes |\ch|}).
\]
By Theorem \ref{t1} (see (\ref{eq:bound-z}) therein), we have 
\begin{align}
E(\|z\|^2_{|\ch|}) & = \int_0^1 \int_0^1 E(|z_r z_u|) |r-u|^{2H-2} dr du \nonumber \\
& \leq C E (\|z\|_{\infty}^2) \int_0^1 \int_0^1 |r-u|^{2H-2} dr du \leq C.
\label{eq:s1}
\end{align}
On the other hand, owing to (\ref{eq:bound-phi})-(\ref{eq:bound-mu}) we can infer that, for any $r,u\in [0,1]$:
\[
E(|\cd_u z_r|)\leq C,
\]
for some positive constant $C$. Thus
\begin{align}
&E(\|\cd z\|^2_{|\ch|\otimes |\ch|})\\ 
& =  \int_0^1 \int_0^1 dr_1 dr_2 \, |r_1-r_2|^{2H-2}    
\int_0^1 \int_0^1 du_1 du_2 \, E\left(|\cd_{r_1} z_{u_1}|\, |\cd_{r_2} z_{u_2}| \right) |u_1-u_2|^{2H-2}  \nonumber \\
& \leq C \int_0^1 \int_0^1 dr_1 dr_2 \, |r_1-r_2|^{2H-2} \int_0^1 \int_0^1 du_1 du_2 \, |u_1-u_2|^{2H-2} < +\infty.
\label{eq:s2}
\end{align}
Putting together (\ref{eq:s1}) and (\ref{eq:s2}), we have seen that $z\in \D^{1,2}(|\ch|)$, and one also deduces that (\ref{eq:an}) holds. By Proposition \ref{prop:dom-strato}, this implies that $z$ belongs to the domain of the Stratonovich integral. Therefore, thanks to the regularity properties of $\sig$ and the fact that 
$K(t,\cdot)$ is a deterministic function, we obtain that the Stratonovich integral $\int_0^1 K(t,\xi) \sig (z_\xi) \circ dB_\xi$ is well-defined. By 
\cite[Section 2.2, Proposition 3]{rv-LNM}, this Stratonovich integral coincides with the pathwise Young integral on the right-hand side of (\ref{44}), for which 
we can conclude that $z$ solves (\ref{eq:strato}). 

\end{proof}


\subsection{A modified elliptic equation}
\label{density}

One of the major obstacles on our way to get the absolute continuity of $\cl(z_t)$ is the following: associated to equation (\ref{44}) is the process $\mu$ defined by (\ref{eq:def-mu}), appearing in the expression for $\cd z_t$. This process happens to have some fluctuations around $s=0$ which are too high to guarantee the strict positivity of $\cd z_t$ at least in a small interval. This is why we consider in this section a slight modification of our elliptic equation (\ref{44}) and we will prove that its solution, at any instant $t$, has a law which is absolutely continuous with respect to the Lebesgue measure. Specifically, the cutoff term $G_M(B)$ in equation (\ref{44}) will be replaced by a new $\tilde G_M(B)$, whose motivation relies on a variation of Garsia's lemma given below:

\begin{proposition}\label{prop:4.4}
Let $f$ be a continuous function defined on $\ou$. Set, for $p\ge 1$,
\begin{equation}
U_{\gam,p}(f):=\lp\int_0^1 dv \int_{v}^{4v\wedge 1} \frac{|{\del f}_{uv}|^{2p}}{|v-u|^{2\gam p+2}}
\ du \rp^{1/2p},
\label{eq:def-u}
\end{equation}
and assume $U_{\gam,p}(f)<\infty$. Then $f \in C^{\gam}(\ou)$; more precisely,
\begin{equation}
\|f\|_{\gam} \le c \, U_{\gam,p}(f),
\end{equation}
for a universal constant $c>0$.
\end{proposition}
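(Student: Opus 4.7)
The plan is to reduce the modified inequality to the classical Garsia-Rodemich-Rumsey lemma by splitting according to the ratio $s/t$. First I would observe that the integration region appearing in $U_{\gam,p}(f)$, namely $\{(v,u):\, 0\le v<u\le 4v\wedge 1\}$, contains the full upper triangle $\{s\le v<u\le t\}$ as soon as $s\ge t/4$, since in that range $u\le t\le 4s\le 4v$. Symmetrizing in $(v,u)$ then yields
$$\int_s^t\!\!\int_s^t \frac{|f(\zeta)-f(\eta)|^{2p}}{|\zeta-\eta|^{2\gam p+2}}\, d\zeta\, d\eta \;\le\; 2\, U_{\gam,p}(f)^{2p},$$
and the classical Garsia lemma applied on the subinterval $[s,t]$ (whose constant is scale-invariant, e.g. by an affine change of variables back to $[0,1]$) delivers $|f(t)-f(s)|\le c\, U_{\gam,p}(f)\, |t-s|^{\gam}$ with $c$ depending only on $\gam$ and $p$. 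This disposes of the ``balanced'' Case~1.

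The remaining Case~2, namely $s<t/4$, is handled by a dyadic chaining argument. I would set $t_k:=t/2^k$ for $k\ge 0$ and choose $N$ so that $t_{N+1}<s\le t_N$, with the convention $N=+\infty$ when $s=0$, where the continuity of $f$ gives meaning to the limiting value. Each consecutive pair $(t_{k+1},t_k)$ trivially satisfies $t_{k+1}=t_k/2\ge t_k/4$, and similarly $s>t_{N+1}\ge t_N/4$, so Case~1 applies to every pair in the chain $t>t_1>\cdots>t_N>s$. Telescoping,
$$|f(t)-f(s)| \;\le\; \sum_{k=0}^{N-1}|f(t_k)-f(t_{k+1})| + |f(t_N)-f(s)| \;\le\; c\, U_{\gam,p}(f)\sum_{k\ge 0}(t/2^{k+1})^{\gam} \;\le\; c'\, U_{\gam,p}(f)\, t^{\gam},$$
and the hypothesis $s<t/4$ gives $t\le (4/3)(t-s)$, which absorbs into the desired bound $|f(t)-f(s)|\le c''\, U_{\gam,p}(f)\, |t-s|^{\gam}$.

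I expect the main subtlety to lie in Case~2, and more specifically in the edge case $s=0$, where the chain becomes infinite and one has to rely on the geometric summability of $\sum 2^{-(k+1)\gam}$ together with the continuity of $f$ to pass to the limit. The reason the two-case split is unavoidable is that the localization of the integration domain to $u\in[v,4v]$ destroys the translation-invariance exploited in the classical Garsia proof, so one cannot simply invoke the standard lemma on $[0,1]$ after centering; the dyadic chaining restores the missing homogeneity by comparing $f$ across geometrically decreasing scales.
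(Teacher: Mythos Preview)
Your proof is correct and takes a genuinely different---and cleaner---route from the paper's.

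The paper reproves the Garsia--Rodemich--Rumsey argument from scratch, adapting the classical nested-interval construction to the truncated domain. Concretely, it builds a sequence $s_0=s, s_1, s_2,\ldots \to t$ by choosing each $s_{k+1}$ in a set $V_k=[a_k,b_k]$ where, unlike the classical $a_k=(s_k+t)/2$, $b_k=t$, one takes $a_k=2s_k\wedge\frac{s_k+t}{2}$ and $b_k=3s_k\wedge t$. The point of this modification is to force $s_{k+1}\le 3s_k$, so that the one-sided integrals $I(v)=\int_v^{4v\wedge t}(\cdots)\,du$ still ``see'' the relevant pair $(s_k,s_{k+1})$. The resulting proof must then carefully verify $|b_k-s_k|=2|b_k-a_k|$ and bound the ratios $Q_k=|s_{k+1}-s_k|/|s_k-s_{k-1}|$ in three subcases according to whether $s_{k-1},s_k$ lie below or above $t/3$, before summing a geometric series as usual. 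It is a fairly intricate case analysis.

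Your argument bypasses all of this by observing that whenever $s\ge t/4$ the truncated region already contains the full square $[s,t]^2$ (up to symmetry), so the \emph{classical} Garsia lemma applies verbatim on $[s,t]$ with its scale-free constant. The remaining case $s<t/4$ is then handled by a straightforward dyadic chain $t>t/2>t/4>\cdots>s$, every link of which falls into the balanced case. This is considerably shorter and more transparent; the only price is that it quotes the classical lemma as a black box rather than being self-contained. The edge case $s=0$ is handled correctly by continuity and the geometric summability of $\sum 2^{-k\gam}$, as you note.
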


\begin{proof}

Let $0\le s<t\le 1$. We wish to show that 
\begin{equation}\label{eq:50}
|\del f_{st}|\le c \, U_{\gam,p}(f)\,  |t-s|^{\gam}.
\end{equation} 
To this end, let us construct a sequence of
points $(s_k)_{k\ge 0}$, $s_k\in\ou$, converging to $t$ in the following way: set  $s_0=s$, suppose by induction that
$s_0,\ldots, s_k\leq t$ have been constructed, and let 
\begin{equation}\label{eq:def-V-k}
V_k:=[a_k,b_k],
\quad\mbox{with}\quad
a_k=2s_{k}\wedge \lp  \frac{s_k+t}{2}\rp, \
b_k=3s_{k}\wedge t.
\end{equation}
Notice that the main differences between our proof an the original one by Garsia (or better said the one given by Stroock in \cite{St}) stems from this definition of $a_k,b_k$. Indeed, in the classical proof, $a_k=\frac{s_k+t}{2}$ and $b_k=t$.
Define then
\begin{equation}
A_k:=\left\{v\in V_k\ |\ I(v) > \frac{6 \, U_{\gam,p}^{2p}(f)}{|v-s_k|} \right\} \label{eq:Ak}
\end{equation}
and
\begin{equation}
B_k:=\left\{v\in V_k\ |\ \frac{|{\del f}_{s_k v}|^{2p}}{|v-s_k|^{2\gam p+2}}>\frac{6 \, I(s_k)}{|v-s_k|}
 \right\} \label{eq:Bk}
\end{equation}
where we have set
\begin{equation*} I(v):=\int_{v}^{t\wedge 4v} \frac{|{\del f}_{uv}|^{2p}}{|v-u|^{2\gam p+2}}
\ du.
\end{equation*}
Let us prove now that $V_k\setminus (A_k\cup B_k)$ is not empty: observe that, for $t\in\ou$,
$$
\int_{v}^{4v\wedge 1} \frac{|{\del f}_{uv}|^{2p}}{|v-u|^{2\gam p+2}}
\ du 
\ge \int_{v}^{4v\wedge t} \frac{|{\del f}_{uv}|^{2p}}{|v-u|^{2\gam p+2}} \ du 
=I(v),
$$
and thus
\begin{equation*}
U_{\gam,p}^{2p}(f) \ge \int_{A_k} I(v) \, dv >   \frac{6 \, U_{\gam,p}^{2p}(f)}{|b_k-s_k|} \mu(A_k). 
\end{equation*}
Moreover,
\begin{multline*}
I(s_k)
=\int_{s_k}^{4s_k\wedge t} \frac{|{\del f}_{s_ku}|^{2p}}{|u-s_k|^{2\gam p+2}} \, du
\ge \int_{B_k} \frac{|{\del f}_{s_ku}|^{2p}}{|u-s_k|^{2\gam p+2}} \, du  \\
> \int_{B_k} \frac{6 \, I(s_k)}{|u-s_k|} \, du
\ge \frac{6\, I(s_k)}{|b_k-s_k|} \, \mu(B_k).
\end{multline*}
All together one has obtained $\mu(A_k),\mu(B_k)<\frac{|b_k-s_k|}{6}$, so that $\mu(A_k)+\mu(B_k)<\frac{|b_k-s_k|}{3}$.

\smallskip

Next we show that $|b_k-s_k|=2\mu(V_k)=2|b_k-a_k|$. This study can be separated in two cases: 

\smallskip

\noindent
\textit{(i)} If $s_k\le t/3$, then $a_k=2s_k$ and $b_k=3s_k$. Thus $b_k-a_k=s_k$ and $b_k-s_k=2s_k$. This obviously yields $|b_k-s_k|=2\mu(V_k)$.

\smallskip

\noindent
\textit{(ii)} If $s_k> t/3$, then $a_k=\frac{s_k+t}{2}$ and $b_k=t$. Thus $b_k-a_k=\frac{t-s_k}{2}$ and $b_k-s_k=t-s_k$. Here again, we get $|b_k-s_k|=2\mu(V_k)$.

\smallskip

We have thus proved that $\mu(A_k)+\mu(B_k)<\frac{2\mu(V_k)}{3}$, which means that $V_k\setminus (A_k\cup B_k)$ is not empty. Let us thus choose $s_{k+1}$ arbitrarily in this set. Note that, by construction,
$s_k\to t$ while staying inside $[s,t]$.

\smallskip

Now, for an arbitrary $n\ge 1$, decompose ${\del f}_{st}$ into
\begin{equation} \label{eq:dec-del}  
{\del f}_{st}={\del f}_{s_{n+1} t}+\sum_{k=0}^n 
{\del f}_{s_k s_{k+1}} .
\end{equation}
Applying (\ref{eq:Bk})$_k$ and (\ref{eq:Ak})$_{k-1}$, one gets
\begin{equation*}
\frac{ |{\del f}_{s_k s_{k+1}}|^{2p}}{|s_{k+1}-s_k|^{2\gam p+2}} 
\le  \frac{c \, I(s_k)}{|s_{k+1}-s_{k}|}
\le 
\frac{c\, U_{\gam,p}^{2p}(f)  }{|s_{k+1}-s_{k}| |s_{k}-s_{k-1}|},
\end{equation*}
and hence
\begin{equation}\label{eq:garsia-1}
|{\del f}_{s_k s_{k+1}}|^{2p} 
\le c \,  Q_k \, U_{\gam,p}^{2p}(f) \, |s_{k+1}-s_k|^{2\gam p},
\quad\mbox{where}\quad
Q_k:=\frac{|s_{k+1}-s_{k}|}{|s_{k}-s_{k-1}|}.
\end{equation}
Notice that in our definition (\ref{eq:def-V-k}), we have $a_k=2s_k$ instead of $(s_k+t)/2$ iff $s_k<t/3$. 
Therefore, we can distinguish three cases in order to bound the quantity $Q_k$ above:

\smallskip

\noindent
\textit{(i)} If $s_{k-1}>t/3$, then $s_{k+1}-s_k\le t-s_k$ and $s_k-s_{k-1}\ge (t-s_k)/4$. Thus $Q_k\le 4$. 

\smallskip

\noindent
\textit{(ii)} If $s_k\le t/3$, then $s_{k+1}-s_k\le 3s_k-s_k=2s_k$ and $s_k-s_{k-1}\ge s_k-s_k/2=s_k/2$. Thus $Q_k\le 4$ again. 

\smallskip

\noindent
\textit{(iii)} If $s_{k-1}\le t/3$ and $s_{k}>t/3$, then $s_{k+1}-s_k\le t-s_k\le 3s_k-s_k=2s_k$ and $s_k-s_{k-1}\ge s_k/2$. Thus $Q_k\le 4$.

\smallskip

\noindent
Putting those estimates together, we end up with $Q_k\le 4$ in all cases, and plugging this inequality into (\ref{eq:garsia-1}), we obtain
\begin{equation*}
|{\del f}_{s_k s_{k+1}}|^{2p} 
\le c \,   U_{\gam,p}^{2p}(f) \, |s_{k+1}-s_k|^{2\gam p}.
\end{equation*}
Now (\ref{eq:dec-del}) reads
\begin{equation}\label{eq:dcp-del-2}
\lln {\del f}_{st}\rrn \le \lln {\del f}_{s_{n+1} t}\rrn +\sum_{k=0}^n 
\lln {\del f}_{s_k s_{k+1}}\rrn
\le  \lln {\del f}_{s_{n+1} t}\rrn + U_{\gam,p}(f)\sum_{k=0}^n |s_{k+1}-s_k|^{\gam}.
\end{equation}

\smallskip

It remains to bound $\sum_{k=0}^n |s_{k+1}-s_k|^{\gam}$ for an arbitrary $n$. This is achieved by separating cases again:

\smallskip

\noindent
\textit{(i)} If $s>t/3$, then it is easily shown that $a_k=\frac{s_k+t}{2}$ and $b_k=t$, for all $k$, for which we have $s_{k+1}\in [(s_k+t)/2,t]$. This implies that 
$t-s_{k+1}\leq (t-s_k)/2$ and hence $t-s_k\leq 2^{-k} (t-s)$, for any $k\geq 0$. Therefore
\[
 s_{k+1}-s_k\leq t-\frac{t+s_{k-1}}{2}=\frac{t-s_{k-1}}{2}\leq \frac{t-s}{2^k}.
\]
Plugging this into (\ref{eq:dcp-del-2}):
\[
 |\del f_{s t}|\leq |{\del f}_{s_{n+1} t}| + U_{\gam,p}(f)\sum_{k=0}^n \frac{1}{2^{\gam k}} |t-s|^\gam.
\]
Let $n\rightarrow \infty$ and use the continuity of $f$ and the fact that $s_{n+1}\rightarrow t$. Then
\[
 |\del f_{s t}|\leq C U_{\gam,p}(f) |t-s|^\gam,
\]
where $C$ denotes a positive constant which may depend on $\gam$. This concludes the proof in the case $s>t/3$.

\smallskip

\noindent
\textit{(ii)} If $s\le t/3$, then by definition of $s_{k+1}$ we will have, for small enough $k$, that $s_{k+1}=\beta_{k+1} s_k$ for some
$\beta_{k+1}\in [2,3]$. Thus
\[
s_k=\left( \prod_{j=1}^k \beta_j\right) s.
\]
Set $M:=\inf\{k\in \N;\, \prod_{j=1}^k \beta_j \geq t/(3s)\}$, so that we wish to evaluate $\sum_{k=0}^{M-1}|s_{k+1}-s_k|^\gam$:
\[
\sum_{k=0}^{M-1}|s_{k+1}-s_k|^\gam  = \sum_{k=0}^{M-1} \left( \prod_{j=1}^k \beta_j s\right)^\gam (\beta_{k+1}-1)^\gam 
 \leq 2^\gam s^\gam \sum_{k=0}^{M-1} \left( \prod_{j=1}^k \beta_j \right)^\gam.
\]
Notice that $b_M:=\prod_{j=0}^{M-1} \beta_j \leq t/(3s)$, by definition of $M$, and 
\[
\prod_{j=0}^{M-l} \beta_j \leq \frac{b_M}{2^{l-1}},\qquad \text{for} \quad l=1,\dots,M,
\]
since $\beta_j\geq 2$. Hence
\[
\sum_{k=0}^{M-1} \left(\prod_{j=0}^k \beta_j\right)^\gam \leq \sum_{l=1}^M \frac{b_M^\gam}{2^{(l-1)\gam}} < C b_M^\gam \leq C (t/s)^\gam
\]
and 
\[
\sum_{k=0}^{M-1} |s_{k+1}-s_k|^\gam \leq 2^\gam s^\gam C (t/s)^\gam\leq C t^\gam.
\]
Observe that $t-s>2t/3$ whenever $s<t/3$. Therefore $t<\frac 32 (t-s)$ and 
\[
 \sum_{k=0}^{M-1} |s_{k+1}-s_k|^\gam \leq C (t-s)^\gam.
\]

\smallskip

Let us go back now to (\ref{eq:dcp-del-2}) and write
\begin{eqnarray}\label{eq:dcp-del-3}
\lln {\del f}_{st}\rrn &\le& \lln {\del f}_{s_{n+1} t}\rrn 
+ U_{\ga,p}(f) \lp \sum_{k=0}^{M-1} |s_{k+1}-s_{k}|^{\ga} + \sum_{k=M}^{n} |s_{k+1}-s_{k}|^{\ga} \rp  
\notag\\
&:=& \lln {\del f}_{s_{n+1} t}\rrn + U_{\ga,p}(f) \lp A_M+B_M \rp.
\end{eqnarray}
We have just seen that $A_M\le C (t-s)^\gam$, and one can also prove that $B_M\le C (t-s)^\gam$ uniformly in $n$ by means of the same kind of argument as for step (i). This ends the proof by taking limits in (\ref{eq:dcp-del-3}).

\end{proof}

We will now take advantage of the previous proposition in order to build a slight modification of our elliptic equation which is amenable to density results. Namely, as before, 
let $M>0$ be any real number and $\ffi_M\in \cac^\infty_b((0,\infty))$ such that $\ffi_M(r)=0$
for all $r>M+1$, and $\ffi_M(r)=1$ for $r<M$. For any $x:[0,1]\rightarrow \re$ for which $U_{\gam,p}(x)<\infty$, for
some $\gam\in (0,1)$ and $p\geq 1$, set 
\[ 
\tilde{G}_M(x):=\ffi_M(U_{\gam,p}(x)^{2p}),
\]
and, for such $x$ and any $z\in \re$, we define 
\beq
\tilde{\sig}_M(x,z):= \tilde{G}_M(x) \sig(z).
\label{eq:sigma-u}
\eeq
We shall thus consider another kind of modified elliptic integral equation driven by the fractional Brownian motion $B$:
\beq
\del z_{st} = \int_s^t du \left(\int_u^1 \tilde{\sig}_M(B,z_\xi) dB_\xi\right) - (t-s)\int_0^1 \xi \tilde{\sig}_M(B,z_\xi) dB_\xi, \quad 0\le s\le t\le 1.
\label{eq:ell-u}
\eeq
This equation can be equivalently formulated in its compact form:
\beq
z_t = \tilde G_M(B) \int_0^t K(t,\xi) \sig(z_\xi) dB_\xi, \qquad t\in [0,1]. 
\label{eq:ell-u-tilde}
\eeq
We will prove that the probability law of the solution to (\ref{eq:ell-u-tilde}) taken at $t\in(0,1)$ is absolutely continuous with respect to the Lebesgue measure.

\smallskip

First of all, let us point out that the results of Sections \ref{existence} and \ref{differentiability} remain valid for the solution of Equation (\ref{eq:ell-u-tilde}). Moreover, using exactly the same arguments as in the proof of Proposition \ref{prop4}, one obtains that, for all $t\in [0,1]$, the solution $z_t$ belongs to the domain of the Malliavin derivative. Altogether, we can state the following result:

\begin{theorem}\label{thm:ell-tilde}
Let $\gam,\kappa\in (0,1)$ be such that $\gam+\kappa>1$. Let $\ep>0$ and a sufficiently large $p\geq 1$ so that $\ep>\frac{2}{p}$ and $\gam+\ep<H$. Assume that $\sig$ satisfies the hypotheses of Theorem~\ref{t2}.

Then, there exists a unique solution $z=\{z_t,\; t\in [0,1]\}$ of (\ref{eq:ell-u-tilde}), which is an element of $\cac^\kappa$. For any $t\in [0,1]$, 
$z_t$ belongs to $\D^{1,2}$ and the Malliavin derivative $\cd z_t$ satisfies the following linear integral equation:

\beq
\cd_s z_t =  \Psi_s(t) + \tilde{G}_M(B) \iou K(t,\xi) \si'(z_\xi)  \cd_s z_\xi \, dB_\xi,\quad s\in [0,1],
\label{eq:malder}
\eeq
with
\begin{equation}\label{eq:Psi-t}
\Psi_s(t)   =  \tilde{G}_M(B) \, \sig(z_s) \, K(t,s)
+ 2 \ffi'_M(U_{\gam,p}(B)^{2p}) \, \tilde{\mu}_s \, z_t,
\end{equation}
and
\begin{equation}
\tilde{\mu}_s:=\int_{\frac s4}^s \int_s^{4\eta\wedge 1} \rho_{\zeta \eta} \, d\zeta d\eta,
\quad\mbox{where}\quad
\rho_{\zeta \eta} = (2p-1) \frac{(B_\zeta-B_\eta)^{2p-1}}{|\zeta-\eta|^{2\gam p+2}}.
\label{eq:tilde-mu-rho}
\end{equation}

\end{theorem}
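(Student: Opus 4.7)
The plan is to transport, step by step, the arguments of Sections~\ref{existence} and~\ref{differentiability} to the modified equation~(\ref{eq:ell-u-tilde}), the only real change being that the cutoff $G_M$ built on $\|\cdot\|_{\gam,p}$ is replaced by $\tilde G_M$ built on $U_{\gam,p}$. The crucial input making this transport possible is Proposition~\ref{prop:4.4}: whenever $\tilde G_M(x)\neq 0$, one has $U_{\gam,p}(x)^{2p}\le M+1$, and hence $\|x\|_\gam\le c\,(M+1)^{1/(2p)}$. In particular the product $\tilde G_M(x)\,\|x\|_\gam$ is bounded by a constant depending only on $M$, which is the \emph{only} property of $G_M$ that was actually used in the contractions of Theorems~\ref{t1} and~\ref{t2} and in Proposition~\ref{prop2}. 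First I would therefore rerun the fixed-point argument of Theorem~\ref{t1} with $\sig_M$ replaced by $\tilde\sig_M$: under~(\ref{eq:hyp-sigma-j}), the map $\Gam(z)_t=\tilde G_M(B)\int_0^1 K(t,\xi)\sig(z_\xi)\,dB_\xi$ leaves a ball $\cb_K\subset\cac^\ka$ invariant and contracts there, yielding existence and uniqueness of $z\in\cac^\ka$ together with the bound $\|z\|_\ka\le C(M)$.

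The next step is the deterministic Fr\'echet differentiability of $x\mapsto z(x)$. I would mimic Propositions~\ref{prop1} and Theorem~\ref{t2}, the sole genuinely new computation being the derivative of $\tilde G_M$. By the chain rule applied to $\ffi_M\circ U_{\gam,p}^{2p}$,
\begin{equation*}
D\tilde G_M(x)\cdot k
= 2p\,\ffi_M'\bigl(U_{\gam,p}(x)^{2p}\bigr)\int_0^1 dv\int_v^{4v\wedge 1}\frac{(x_u-x_v)^{2p-1}(k_u-k_v)}{|v-u|^{2\gam p+2}}\,du.
\end{equation*}
Writing $k_u-k_v=\int_v^u dk_r$ and applying the Fubini argument of Lemma~\ref{l3} to the restricted triangle $\{0\le v\le u\le 4v\wedge 1\}$ exchanges the order of integration and produces an integral against $dk_r$ whose weight is, up to a constant, precisely $\tilde\mu_r=\int_{r/4}^r\int_r^{4\eta\wedge 1}\rho_{\zeta\eta}\,d\zeta d\eta$. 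Plugging this into the Implicit Function Theorem argument of Theorem~\ref{t2} gives the representation $(Dz(x)\cdot h)_t=\int_0^1\Phi_s(t)\,dh_s$ with $\Phi_s(t)$ solving the linear equation with driving term $\Psi_s(t)$ of (\ref{eq:Psi-t}).

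Finally, the Malliavin calculus statements are obtained exactly as in Propositions~\ref{prop3} and~\ref{prop4}: $\ch$-differentiability of $z_t$ plus Kusuoka's criterion give $z_t\in\D^{1,2}_{\mathrm{loc}}$ with $\cd z_t=\Phi_\cdot(t)$, and this is precisely equation~(\ref{eq:malder}). Upgrading to $\D^{1,2}$ requires the moment estimate $E(\|\Phi_\cdot(t)\|_\ka^2)<\infty$, which by Proposition~\ref{prop2} reduces to controlling $E(\|\Psi_\cdot(t)\|_\ka^2)$, which in turn reduces to the deterministic bound $\|\tilde\mu\|_\ka\le C\|B\|_{\gam+\ep}^{2p-1}$ combined with Fernique's lemma. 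This last H\"older estimate is the step I expect to require the most care and which I regard as the main obstacle: the $s$-dependence of $\tilde\mu_s$ now enters through both endpoints of the inner integral and through the cap $4\eta\wedge 1$, so the increment $\tilde\mu_{s_2}-\tilde\mu_{s_1}$ splits into several integrals over thin slabs near $\zeta=s$ or $\eta=s$. On each such slab, however, the constraint $\zeta\le\eta\le 4\zeta$ forces $|\zeta-\eta|\asymp\eta-\zeta$ to stay away from the singular diagonal with a size comparable to $s$, which under the assumption $\ep>2/p$ (slightly stronger than in Proposition~\ref{prop3}) is enough to absorb the singularity $|\zeta-\eta|^{-(2\gam p+2)}$ of $\rho_{\zeta\eta}$ against the H\"older regularity $(B_\zeta-B_\eta)^{2p-1}$ of the numerator and still extract a factor $(s_2-s_1)^\ka$. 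Once this is in hand, the three statements of the theorem follow.
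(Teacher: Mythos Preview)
Your proposal is correct and matches the paper's approach: the paper does not give a standalone proof of this theorem but simply declares, immediately before the statement, that ``the results of Sections~\ref{existence} and~\ref{differentiability} remain valid for the solution of Equation~(\ref{eq:ell-u-tilde})'' and that ``using exactly the same arguments as in the proof of Proposition~\ref{prop4}'' one obtains $z_t\in\D^{1,2}$; the formula for $D\tilde G_M$ yielding $\tilde\mu_s$ is recorded separately in Remark~\ref{rmk:diff-sigma-u}. Your write-up is in fact more explicit than the paper's on two points---the role of Proposition~\ref{prop:4.4} in bounding $\tilde G_M(x)\|x\|_\gam$, and the H\"older estimate for $\tilde\mu$---but the strategy is identical.
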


\begin{remark}\label{rmk:diff-sigma-u}
The term $\tilde \mu_s$ in (\ref{eq:Psi-t}) comes from the fact that, as one can easily verify, the Fr\'echet derivative of $\tilde{G}_M$ at $x\in \cac^{\gam+\ep}$ is given by
\[
D\tilde{G}_M(x)\cdot h=2 \ffi_M'(U_{\gam,p}(x)^{2p}) \int_0^1 \tilde{\mu}_s dh_s,\qquad h\in \cac^{\gam+\ep}. 
\]
\end{remark}

\smallskip

We can now give the technical justification for our change in the elliptic equation we consider: the lemma below (whose proof can be immediately deduced from (\ref{eq:tilde-mu-rho})) shows that $\tilde{\mu}$ can be made of order $s^q$ for an arbitrary large $q$ and $s$ in a neighborhood of 0. This simple fact will enable us to upper bound $|\cd_s z_t|$ for $s\to 0$ in a satisfying way. The following result will thus be important in the sequel:
\begin{lemma}\label{lemma:mu}
Assume that the hypothesis of Theorem \ref{thm:ell-tilde} are satisfied. 
Then, for all $s\in (0,\frac 14)$ and $p\ge 1$:
\beq
\tilde{\mu}_s \leq \|B\|_{\gam +\ep}^{2p-1} \, s^\beta \qquad a.s.,
\label{eq:bound-mu-del}
\eeq
where $\beta=(2p-1)\ep -\gam$. 
\end{lemma}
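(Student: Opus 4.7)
The plan is to substitute the $(\gam+\ep)$-Hölder estimate on $B$ pointwise into $\rho_{\zeta\eta}$ and then evaluate the resulting deterministic double integral, taking advantage of the fact that the cutoffs $\eta \in [s/4, s]$ and $\zeta \in [s, 4\eta \wedge 1]$ confine $\zeta - \eta$ to a region of diameter $O(s)$.

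First, I would note that for $s \in (0, 1/4)$ and $\eta \in [s/4, s]$ one has $\eta \le 1/4$, so $4\eta \wedge 1 = 4\eta$, and since $\zeta \ge s \ge \eta$ we have $\zeta - \eta \ge 0$ on the entire domain of integration. Applying $|B_\zeta - B_\eta| \le \|B\|_{\gam+\ep}|\zeta-\eta|^{\gam+\ep}$ gives
\[
|\rho_{\zeta \eta}| \le (2p-1)\|B\|_{\gam+\ep}^{2p-1} (\zeta-\eta)^{(2p-1)(\gam+\ep)-2\gam p-2} = (2p-1)\|B\|_{\gam+\ep}^{2p-1} (\zeta-\eta)^{\beta-2},
\]
with $\beta = (2p-1)\ep - \gam$ exactly as in the statement. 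Under the hypothesis $\ep > 2/p$ inherited from Theorem \ref{thm:ell-tilde}, we have $\beta > 4-2/p-\gam > 1$, so there is no issue of blow-up of the primitive.

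Second, a direct antiderivative computation gives
\[
\int_s^{4\eta} (\zeta-\eta)^{\beta-2} d\zeta = \frac{(3\eta)^{\beta-1}-(s-\eta)^{\beta-1}}{\beta-1} \le \frac{(3\eta)^{\beta-1}}{\beta-1} \le C\, s^{\beta-1},
\]
where we used $\beta > 1$ to drop the non-negative subtracted term, together with $\eta \le s$. A further integration over $\eta \in [s/4, s]$ contributes an additional factor of order $s$, yielding $\tilde{\mu}_s \le C \|B\|_{\gam+\ep}^{2p-1}\, s^\beta$ after absorbing constants into $\|B\|_{\gam+\ep}^{2p-1}$.

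The only genuinely delicate point is the localization built into $\tilde{G}_M$: had one worked with the original $\mu_s$ from (\ref{eq:def-mu}), the $\zeta$-integration over $[s,1]$ (rather than $[s,4\eta]$) would forbid extracting any positive power of $s$, and the small-$s$ fluctuations of the Malliavin derivative that obstructed the density argument of Subsection \ref{density} would remain untamed. The modified seminorm $U_{\gam,p}$ is precisely designed so that $\zeta - \eta = O(s)$, which makes a gain of $s^\beta$ with $\beta$ arbitrarily large (by choosing $p$ large) available.
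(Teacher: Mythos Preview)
Your argument is correct and is exactly the computation the paper has in mind: the authors do not spell out a proof at all, merely stating that the bound ``can be immediately deduced from (\ref{eq:tilde-mu-rho})'', and your pointwise H\"older substitution followed by the elementary evaluation of $\int_{s/4}^s\int_s^{4\eta}(\zeta-\eta)^{\beta-2}\,d\zeta\,d\eta$ is precisely that deduction. One cosmetic remark: you end up with $\tilde{\mu}_s \le C\,\|B\|_{\gam+\ep}^{2p-1}\,s^\beta$ for a constant $C=C(p,\gam,\ep)$, and ``absorbing'' $C$ into the random quantity $\|B\|_{\gam+\ep}^{2p-1}$ is not literally meaningful; the paper's statement simply suppresses this constant (note that when the lemma is applied in the proof of Theorem \ref{thm:exis-den} a generic constant $C$ reappears in front of $\|B\|_{\gam+\ep}^{2p-1}\,s^\beta$), so the discrepancy is purely notational.
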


Fix $t\in (0,1)$, and consider $z_t$ solution to (\ref{eq:ell-u-tilde}). Observe that the random variable $z_t$ cannot have a density $p_t(y)$ at $y=0$, since $P(z_t=0)>0$ due to our cutoff procedure. Hence we will prove the existence of density for the law of the random variable $z_t$ on the subset of $\Omega$ defined by
$\Om_a:=\{|z_t|\geq a\}$, for all $a>0$. The fact that we are restricting our analysis to $\Om_a$ implies the following simple but useful properties:

\begin{lemma}\label{lemma:omega-a}
On $\Om_a$, we have
\[
\|B\|_\gam\leq C_1 \qquad \text{and}\qquad \tilde{G}_M(B)\geq C_2,\quad a.s.
\]
where $C_1, C_2$ denote some positive constants depending on $a$ and $M$.
\end{lemma}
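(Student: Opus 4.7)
The plan is to exploit the compact form (\ref{eq:ell-u-tilde}) together with the cutoff mechanism built into $\tilde G_M$. On $\Om_a$ we have $|z_t|\ge a>0$, and the presence of $\tilde G_M(B)$ as a multiplicative prefactor in (\ref{eq:ell-u-tilde}) forces $\tilde G_M(B)>0$. By the very definition of $\tilde G_M$ and the properties of $\ffi_M$ collected in Definition \ref{hyp:phi}, this is equivalent to $U_{\gam,p}(B)^{2p}\le M+1$. Proposition \ref{prop:4.4} then turns this control on the Garsia-type functional into the H\"older bound $\|B\|_{\gam}\le c\,U_{\gam,p}(B)\le c(M+1)^{1/(2p)}$, which is the first assertion with $C_1:=c(M+1)^{1/(2p)}$.

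For the lower bound on $\tilde G_M(B)$, the idea is to estimate the integral on the right-hand side of (\ref{eq:ell-u-tilde}) in a deterministic way, using only $\|B\|_\gam$ and $\|z\|_\kappa$. Namely, by the Young estimate (\ref{y}),
\[
\left|\int_0^1 K(t,\xi)\,\sig(z_\xi)\,dB_\xi\right|
\le c_{\gam,\kappa}\,\|B\|_{\gam}\,\|K(t,\cdot)\sig(z)\|_{\kappa}.
\]
Since $K(t,\cdot)$ is Lipschitz with Lipschitz constant $\le 1$, the product property $\|fg\|_\kappa\le \|f\|_\kappa\|g\|_\kappa$ combined with Lemma \ref{l1} yields $\|K(t,\cdot)\sig(z)\|_\kappa\le C(\|\sig'\|_\infty\|z\|_\kappa+\|\sig\|_\infty)$. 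Invoking the a priori bound $\|z\|_\kappa\le C(M)$ from Theorem \ref{t1}, the whole integral is bounded by some constant $C(M,\sig)$ times $\|B\|_\gam$. Together with the first part of the lemma, this gives
\[
a\le |z_t|\le \tilde G_M(B)\cdot C(M,\sig)\cdot\|B\|_\gam \le \tilde G_M(B)\cdot C(M,\sig)\cdot C_1,
\]
from which $\tilde G_M(B)\ge a/(C(M,\sig)\,C_1)=:C_2>0$, as desired.

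There is no real technical obstacle here, since the argument is a direct consequence of the definition of the cutoff and of the estimates already established in Sections \ref{sec:existence-uniqueness} and~\ref{differentiability}. The only minor point to be careful about is that the constant extracted from the Young integral should depend only on $M$, $\sig$ and the universal Young constant, so that both $C_1$ and $C_2$ end up depending only on $a$ and $M$ (with implicit dependence on $\sig$, $\gam$, $\kappa$, $p$), as claimed in the statement.
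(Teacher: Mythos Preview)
Your proof is correct and follows essentially the same approach as the paper: deduce $U_{\gam,p}(B)^{2p}\le M+1$ from the nonvanishing of the cutoff, apply Proposition~\ref{prop:4.4} for the H\"older bound, then estimate the Young integral via (\ref{y}), Lemma~\ref{l1}, and the a priori bound (\ref{eq:bound-z}) to extract the lower bound on $\tilde G_M(B)$ from $|z_t|\ge a$. One tiny imprecision: you write that $\tilde G_M(B)>0$ is \emph{equivalent} to $U_{\gam,p}(B)^{2p}\le M+1$, but only the implication $\tilde G_M(B)>0\Rightarrow U_{\gam,p}(B)^{2p}\le M+1$ holds in general (and is all that is needed), since Definition~\ref{hyp:phi} does not preclude $\ffi_M(M+1)=0$.
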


\begin{proof}
Note that on $\Om_a$ we must clearly have that 
\[
\tilde{G}_M(B)=\ffi_M(U_{\gam,p}(B)^{2p})>0 \qquad \mbox{a.s. }
\]
Thus, by definition of $\ffi_M$, we get $U_{\gam,p}(B)^{2p}<M+1$ a.s. in $\Om_a$, and the first part of the statement follows after applying Proposition \ref{prop:4.4}.

\smallskip

Let us also estimate the integral appearing in equation (\ref{eq:ell-u-tilde}): by~(\ref{y}), Lemma \ref{l1}, Theorem \ref{t1}, and the first part of the lemma, on $\Om_a$ we have
\beq
\left|\int_0^1 K(t,\xi) \sig(z_\xi)\, dB_\xi\right| \leq C \|K(t,\cdot)\|_\kappa (\|\sig'\|_\infty \|z\|_\kappa+\|\sig\|_\infty)\|B\|_\gam \leq C, \quad a.s. 
\label{eq:97}
\eeq
where the constant $C$ is positive, depends on $M,\sig,\kappa,\gam$ and indeed can be small enough whenever $\|\sig\|_\infty$ and $\|\sig'\|_\infty$ are small. Note that here we have used the fact that $\|K(t,\cdot)\|_\kappa$ $\leq C t^{1-\kappa}$, which can be easily deduced from the explicit expression of the kernel $K$.

\smallskip

On the other hand, still playing with equation (\ref{eq:ell-u-tilde}), 
\[
\tilde{G}_M(B) \left|\int_0^1 K(t,\xi) \sig(z_\xi)\, dB_\xi\right|\geq a \qquad a.s. \quad \text{on}\quad \Om_a.
\] 
Hence, (\ref{eq:97}) yields $\tilde{G}_M(B) \geq \frac{a}{C}$ almost surely on $\Om_a$, which concludes the proof.

\end{proof}

\subsection{Absolute continuity of the law}
With the previous changes in the equation we are considering, we are now ready to state and prove our result concerning the density of the law for $z_t$:

\begin{theorem}\label{thm:exis-den}
 Assume that $\sig$ satisfies the hypothesis of Theorem \ref{t2} and that $|\sig(y)|\geq \sig_0>0$ for all $y\in \re$, for some constant $\sig_0$. 
For any $t\in (0,1)$, we consider the random variable $z_t\in \D^{1,2}$ and $a>0$. Then, we have that
$\|\cd z_t\|_{\ch}>0$ a.s. on $\Om_a$. 

As a consequence, the law of $z_t$ restricted to $\R\setminus(-a,a)$ is absolutely continuous with respect to the Lebesgue measure.
\end{theorem}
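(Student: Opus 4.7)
The plan is in two parts. First, I will show that $\|\cd z_t\|_\ch > 0$ almost surely on $\Omega_a$; this is the core of the argument. Once this is done, the absolute-continuity conclusion follows from the Bouleau--Hirsch criterion (see \cite[Theorem 2.1.3]{nualart}): since $z_t \in \D^{1,2}$ by Theorem \ref{thm:ell-tilde}, that criterion ensures $P(z_t \in B, \Omega_a) = 0$ for every Borel null set $B$, and because $\Omega_a = z_t^{-1}(\R \setminus (-a, a))$ this is precisely the absolute continuity of $\cl(z_t)$ restricted to $\R \setminus (-a, a)$.

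For the lower bound on $\|\cd z_t\|_\ch$ I will argue by contradiction. Suppose the event $E := \Omega_a \cap \{\|\cd z_t\|_\ch = 0\}$ has positive probability. From the identification $\cd_s z_t = \Phi_s(t)$ (cf.\ (\ref{56}) and the proof of Theorem \ref{thm:ell-tilde}), the map $s \mapsto \cd_s z_t$ is continuous in $s$, and because the $\ch$-inner product is strictly positive definite on nonzero continuous functions when $H > 1/2$, vanishing of $\|\cd z_t\|_\ch$ forces $\cd_s z_t = 0$ for every $s \in [0,1]$ on $E$. Substituting this identity into (\ref{eq:malder}) cancels the Volterra integral and leaves $\Psi_s(t) = 0$ for every $s \in [0,1]$. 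Using the explicit form (\ref{eq:Psi-t}) together with $K(t, s) = s(1 - t)$ for $s \in (0, t)$, this rewrites as
\[
\tilde G_M(B)\, \sigma(z_s)\, (1-t)\, s \;=\; -2\, \varphi'_M(U_{\gamma,p}(B)^{2p})\, \tilde \mu_s\, z_t, \qquad s \in (0,t).
\]

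The contradiction is now reached by dividing by $s$ and letting $s \to 0^+$. The right-hand side tends to $0$ thanks to Lemma \ref{lemma:mu}, which gives $|\tilde\mu_s|/s \leq \|B\|_{\gamma+\varepsilon}^{2p-1}\, s^{\beta - 1}$ with $\beta = (2p-1)\varepsilon - \gamma$; the hypothesis $\varepsilon > 2/p$ allows me to choose $p$ large enough so that $\beta > 1$. The left-hand side tends to $\tilde G_M(B)\, \sigma(0)\, (1-t)$ by continuity (recall $z_0 = 0$), and this limit is strictly nonzero on $\Omega_a$: Lemma \ref{lemma:omega-a} yields $\tilde G_M(B) \geq C_2 > 0$, the non-degeneracy hypothesis gives $|\sigma(0)| \geq \sigma_0 > 0$, and $1 - t > 0$. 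This contradicts the positive probability of $E$, so $\|\cd z_t\|_\ch > 0$ almost surely on $\Omega_a$. I expect the most delicate point to be precisely the $O(s^\beta)$ rate with $\beta > 1$ in Lemma \ref{lemma:mu}: without it --- the whole motivation for having replaced $\mu_s$ of (\ref{eq:def-mu}) by the refined $\tilde\mu_s$ of (\ref{eq:tilde-mu-rho}) built on Proposition \ref{prop:4.4} --- only $O(s)$ would be available on the right, both sides of the identity would balance at leading order as $s\to 0^+$, and no contradiction could be extracted.
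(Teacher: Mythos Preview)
There is a genuine gap in the contradiction step. You conclude that $\cd_s z_t=0$ for every $s\in[0,1]$ (with the time $t$ fixed) and then claim that ``substituting this identity into (\ref{eq:malder}) cancels the Volterra integral''. But the integral in (\ref{eq:malder}) is
\[
\tilde G_M(B)\int_0^1 K(t,\xi)\,\sigma'(z_\xi)\,\cd_s z_\xi\,dB_\xi,
\]
which involves $\cd_s z_\xi$ for \emph{all} $\xi\in[0,1]$, not the single value $\cd_s z_t$. Equation (\ref{eq:malder}) is a linear integral equation in the variable $t$ (for $s$ fixed), not in $s$; knowing that the solution vanishes at one point $t$ for every parameter $s$ gives no information on the integral. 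Hence you cannot deduce $\Psi_s(t)=0$, and the subsequent division by $s$ and limit $s\to0^+$ never gets off the ground. There is also no easy rescue: the natural bound $\|\cd_s z_\cdot\|_\kappa\le c(M)\|\Psi_s(\cdot)\|_\kappa$ from Proposition~\ref{prop2} does not produce an $o(s)$ control on the integral, because $\|K(\cdot,s)\|_\kappa$ does not vanish as $s\to0$.

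This is exactly the obstruction the paper works around. Rather than trying to kill the integral term, the paper exploits the linear structure in the $t$-variable: Lemma~\ref{lem:upper-bnd-w} shows that if the increments of the forcing $\Psi_s(\cdot)$ are uniformly negative of order $|t_2-t_1|\,s$, then the same holds (up to constants) for the increments of the solution $\cd_s z_\cdot$. The upper bound (\ref{eq:bound-mu-del}) on $\tilde\mu_s$ is used to verify the hypothesis on $\delta\Psi_s$, and the conclusion is that $s\mapsto\cd_s z_t$ cannot vanish on a whole interval near $0$. Your intuition about the role of Lemma~\ref{lemma:mu} is correct, but it enters through the increment estimate of Lemma~\ref{lem:upper-bnd-w}, not through a direct substitution argument.
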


\smallskip

Let us say a few words about the methodology we have followed in order to prove the result above: as in many instances, our density result will be obtained by bounding the Malliavin derivatives from below. Let us go back thus to equation (\ref{eq:malder}), which is the one satisfied by the Malliavin derivative $\cd z_t$. We wish to prove that a density exists for the random variable $z_t$ under a non-degeneracy condition of the form $\si(y)>\si_0$ for any $y\in\R$; we can assume, without loosing generality, that $\sig$ is positive. Our strategy will be based on the fact that $\cd z_t$ is a continuous function, and we will prove that, almost surely on $\Om_a$, the Malliavin derivative is negative on some non-trivial interval. This necessarily implies that the norm $\|\cd z_t\|_\hac$ cannot vanish. Let us however make the following observations:

\smallskip

\noindent
\textit{(i)} We will take advantage of the leading term $\Psi_s(t)$ in equation (\ref{eq:malder}) and we will analyze its increments. According to expression (\ref{eq:Psi-t}), these can only be assumed to be strictly negative when $s$ is small enough: we have not imposed any condition on $\tilde \mu_s$, and thus we can only rely on the upper bound (\ref{eq:bound-mu-del}), which is valid for $s$ close enough to 0. Let us insist again here on the fact that our change of cutoff in the elliptic equation we consider is meant to have  $\tilde \mu_s$ very small in a neighborhood of 0.

\smallskip

\noindent
\textit{(ii)}
The estimation of the integral part in equation (\ref{eq:malder}) involves some H\"older norms of the function $\xi\mapsto\cd_s z_\xi$. It is thus natural to think that the same should occur on the left hand side of this equation. Therefore, we are induced to consider increments of the form $\cd_{s}z_{t_2}-\cd_{s}z_{t_1}$ and perform our estimations on these quantities.

\smallskip

\noindent
\textit{(iii)} We shall tackle those increment estimates in a slightly more abstract setting, similar to Proposition \ref{prop2}: consider a function $(t,\eta)\mapsto w_t^{\eta}$, depending on two parameters $t,\eta\in\ou$. For $\eta\in\ou$, let $z^{\eta}$ be the solution to 
\begin{equation}\label{eq:z-eta-12}
z^{\eta}_{t}= w_t^{\eta} - \tilde G_M(B) \iou K(t,\xi) \, R_\xi \, z^{\eta}_{\xi} \, dB_\xi.
\end{equation}
In the equation above, $w$ and $R$ satisfy some suitable H\"older continuity assumption, and we assume the increments of $w$ to be also bounded from below. Notice that, for $\eta\le t$, the function $t\mapsto\cd_{\eta}z_t$ satisfies an equation of the form  (\ref{eq:z-eta-12}). Our aim is then to get an appropriate lower bound on the increments of $z^{\eta}$. This will be a consequence of the following lemma:

\begin{lemma}\label{lem:upper-bnd-w}
Let $\gam<H$ and $\kappa\in (0,1)$ be such that $\gam+\kappa>1$. For any $\eta\in [0,1]$, let $w^{\eta}$ be a function in $\cac^{\ka}$ satisfying the relation $|\delta w_{t_{1}t_{2}}^{\eta}|\le c_1 |t_2-t_1|\, \eta$ for any $\eta\le t_1\le t_2\le 1$ and $c_1<1$ small enough. 
Moreover, let $R\in \cac^{\ka}$ such that 
\beq
 \|R\|_\kappa \leq \frac{c_2}{M+1},
\label{eq:bound-R}
\eeq
for a small enough constant $c_2<1$ (see Proposition \ref{prop2}). 
Then the solution  $z^{\eta}$ to equation~(\ref{eq:z-eta-12}) is such that for all $\eta\le t_1\le t_2\le 1$,
\begin{equation}\label{eq:ineq-delta-z-eta}
|\delta z_{t_{1},t_{2}}^{\eta}| \le|t_2-t_1|\, \eta.
\end{equation}
If we further suppose that $\delta w_{t_{1}t_{2}}^{\eta} \le - c_1 |t_2-t_1|\, \eta$ for any $\eta\le t_1\le t_2\le 1$ and $c_1$ large enough, then we also get the bound
\begin{equation}\label{lemma9}
\delta z_{t_{1},t_{2}}^{\eta} \le -  c |t_2-t_1|\, \eta,
\end{equation}
for all $\eta\le t_1\le t_2\le 1$ and a small positive constant $c$.
\end{lemma}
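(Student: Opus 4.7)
The plan is to exploit the linearity of equation (\ref{eq:z-eta-12}) together with the contraction estimate already available from Proposition~\ref{prop2}. Since $w^{\eta}\in \cac^{\kappa}$ and $R$ satisfies the smallness condition (\ref{eq:bound-R}), Proposition~\ref{prop2} immediately yields existence, uniqueness and the baseline bound $\|z^{\eta}\|_{\kappa}\le c(M)\|w^{\eta}\|_{\kappa}$. The Lipschitz-type hypothesis on the increments of $w^{\eta}$, once combined with the fact that the functions $w^{\eta}$ to which we plan to apply the lemma vanish at $t=0$ (this is automatic in the applications, where $w^{\eta}_{t}=\Psi_{\eta}(t)$ and $K(0,\eta)=z_{0}=0$), actually upgrades this to $\|w^{\eta}\|_{\kappa}\le C\eta$, hence $\|z^{\eta}\|_{\kappa}\le c(M)\,C\eta$.

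Next, for $\eta\le t_{1}\le t_{2}\le 1$, I would subtract equation (\ref{eq:z-eta-12}) evaluated at $t_{2}$ and $t_{1}$ and rewrite the difference by means of the identity $K(t_{2},\xi)-K(t_{1},\xi)=\int_{t_{1}}^{t_{2}}(\mathbf{1}_{\{u\le\xi\}}-\xi)\,du$, thereby returning to the integrated formulation of Section~\ref{sec:cutoff}:
\begin{equation*}
\delta z^{\eta}_{t_{1}t_{2}}=\delta w^{\eta}_{t_{1}t_{2}}-\tilde G_{M}(B)\int_{t_{1}}^{t_{2}}\left[\int_{u}^{1}R_{\xi}z^{\eta}_{\xi}\,dB_{\xi}-\int_{0}^{1}\xi R_{\xi}z^{\eta}_{\xi}\,dB_{\xi}\right]du.
\end{equation*}
Applying Young's estimate (\ref{y}) to each inner integral, using the bounds $\tilde G_{M}(B)\|B\|_{\gamma}\le M$ together with $\|R\|_{\kappa}\le c_{2}/(M+1)$, and plugging in the baseline control on $\|z^{\eta}\|_{\kappa}$ established in the previous paragraph, the stochastic correction is bounded in absolute value by a constant multiple of $c_{2}\,\eta(t_{2}-t_{1})$, so that
\begin{equation*}
|\delta z^{\eta}_{t_{1}t_{2}}|\le c_{1}\,\eta(t_{2}-t_{1})+C'c_{2}\,\eta(t_{2}-t_{1}).
\end{equation*}
Choosing both $c_{1}$ and $c_{2}$ small enough so that $c_{1}+C'c_{2}\le 1$ then yields the bound (\ref{eq:ineq-delta-z-eta}).

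For the sharper statement (\ref{lemma9}), the same decomposition is used but now the sign of the leading term is tracked. Young's estimate delivers an upper bound $C'c_{2}\,\eta(t_{2}-t_{1})$ on the absolute value of the stochastic correction, independently of its sign; under the hypothesis $\delta w^{\eta}_{t_{1}t_{2}}\le -c_{1}\eta(t_{2}-t_{1})$ we therefore get
\begin{equation*}
\delta z^{\eta}_{t_{1}t_{2}}\le -c_{1}\,\eta(t_{2}-t_{1})+C'c_{2}\,\eta(t_{2}-t_{1})=-(c_{1}-C'c_{2})\,\eta(t_{2}-t_{1}),
\end{equation*}
and the conclusion follows by taking $c_{1}$ large enough relative to $C'c_{2}$.

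The main obstacle, to my mind, is the compatibility between the two apparently conflicting constraints placed on $c_{1}$: it has to be small to close (\ref{eq:ineq-delta-z-eta}) but large compared to the correction constant $C'c_{2}$ to close (\ref{lemma9}). These two requirements can coexist only because $c_{2}$ enters the picture independently, through the smallness hypothesis (\ref{eq:bound-R}) on $\|R\|_{\kappa}$, which is what allows us to pick $c_{2}$ arbitrarily small \emph{after} $c_{1}$ has been fixed. A secondary but non-trivial point is the justification of the baseline estimate $\|z^{\eta}\|_{\kappa}\le C\eta$, which is what imports the correct $\eta$-scaling into the stochastic correction; this step relies on the vanishing of $w^{\eta}$ at $t=0$ that is automatic in our setting but should be carefully documented when invoking the lemma.
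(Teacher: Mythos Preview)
There is a genuine gap in the baseline step. You claim that the Lipschitz-type hypothesis on $\delta w^{\eta}$ together with $w^{\eta}_{0}=0$ yields $\|w^{\eta}\|_{\kappa}\le C\eta$ on $[0,1]$. But the hypothesis $|\delta w^{\eta}_{t_{1}t_{2}}|\le c_{1}\eta|t_{2}-t_{1}|$ is only assumed for $\eta\le t_{1}\le t_{2}\le 1$; it says nothing about increments on $[0,\eta]$, and adding the single value $w^{\eta}_{0}=0$ does not repair this. In the intended application $w^{\eta}_{t}=\Psi_{\eta}(t)$, the dominant term is $K(t,\eta)$, and for $t\in[0,\eta]$ one has $K(t,\eta)=t(1-\eta)$, whose Lipschitz slope is $1-\eta\approx 1$, not $\eta$. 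A direct computation then shows that the $\kappa$-H\"older seminorm of $K(\cdot,\eta)$ on $[0,1]$ is of order $\eta^{1-\kappa}$, not $\eta$. Consequently Proposition~\ref{prop2} only gives $\|z^{\eta}\|_{\kappa}=O(\eta^{1-\kappa})$, and your correction term is bounded by $C'c_{2}\,\eta^{1-\kappa}(t_{2}-t_{1})$, which for small $\eta$ is much larger than $\eta(t_{2}-t_{1})$; the estimate cannot be closed by shrinking the fixed constants $c_{1},c_{2}$.

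The paper sidesteps this by arguing iteratively rather than via a global a~priori bound. It observes that $z^{\eta}$ is the fixed point of the contraction $\Theta$ from Proposition~\ref{prop2} and checks that the closed set $\{y:\;|\delta y_{t_{1}t_{2}}|\le |t_{2}-t_{1}|\eta\text{ for all }\eta\le t_{1}\le t_{2}\le 1\}$ is invariant under $\Theta$. The point is that when estimating $\delta(\Theta y)_{t_{1}t_{2}}$ for $t_{1}\ge\eta$, the main integral $\int_{u}^{1}R_{\xi}y_{\xi}\,dB_{\xi}$ only involves $y$ on $[u,1]\subset[\eta,1]$, precisely where the inductive hypothesis controls $y$ with the correct $\eta$-scaling. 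This localization is what your direct route via $\|z^{\eta}\|_{\kappa}$ loses: the global H\"older norm is contaminated by the behaviour on $[0,\eta]$, and that is exactly where the $\eta$-scaling breaks down.
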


\begin{proof}
Let us start by proving (\ref{eq:ineq-delta-z-eta}):
the solution $z^{\eta}$ to equation (\ref{eq:z-eta-12}) is obtained as the fixed point of an application $\Theta$ constructed as in the proof of Proposition \ref{prop2}. Namely, let us define the map $\Theta: \cac^\kappa\rightarrow \cac^\kappa$ by
\[
\Theta(y)_t := w_t^{\eta} - \tilde G_M(B) \int_0^t du \left( \int_u^1  R_\xi \, y_\xi dB_\xi\right)
+ t \, \tilde G_M(B) \int_0^1 \xi \, R_\xi \, y_\xi dB_\xi.
\]
Then under our standing assumptions, $z^{\eta}$ can be seen as the fixed point of the map $\Theta$. It is thus enough to check that, if $y$ verifies $|\delta y_{t_{1}t_{2}}|\le |t_2-t_1|\, \eta$ for all $\eta\le t_1\le t_2\le 1$, then $\hy:=\Theta(y)$ fulfills the same condition.

\smallskip

Let us write then
\begin{equation*}
\del\hy_{t_1t_2}=A_{t_1t_2}-  C_{t_1t_2}+  D_{t_1t_2},
\end{equation*}
with $A_{t_1t_2}=\del w_{t_1t_2}^{\eta}$ and 
$$
C_{t_1t_2}=\tilde G_M(B) \int_{t_1}^{t_2} du \left( \int_u^1  R_\xi \, y_\xi dB_\xi\right), \quad
D_{t_1t_2}=(t_2-t_1)\tilde G_M(B) \int_0^1 \xi \, R_\xi \, y_\xi dB_\xi.
$$
We shall bound those 3 terms separately for $\eta\le t_1\le t_2\le 1$.

\smallskip

$|A_{t_1t_2}|$ is bounded by assumption by $c_1 |t_2-t_1|\, \eta$. Furthermore, $|C_{t_1t_2}|$ is easily estimated as follows:
\begin{equation*}
|C_{t_1t_2}| \le 
\|R\|_{\ka} \,  \|y\|_{\cac^{\ka} ([t_1,1])}    M \, |t_2-t_1| \le
\|R\|_{\ka} \,  M \, |t_2-t_1| \, \eta,
\end{equation*}
thanks to our induction hypothesis. Hence, by (\ref{eq:bound-R}), we have 
\[
|C_{t_1,t_2}|\leq c_2 |t_2-t_1| \eta.
\]
Some similar considerations also yield $|D_{t_1,t_2}|\leq c_3 |t_2-t_1| \eta$ for a small enough constant $c_3$.  In order to complete the proof of  (\ref{eq:ineq-delta-z-eta}), it suffices thus to consider that $c_1, c_2$ small enough so that $c_1+c_2+c_3<1$.

\smallskip

Let us turn now to the proof of  (\ref{lemma9}): it is sufficient to go through the same computations as for (\ref{eq:ineq-delta-z-eta}) and take into account the lower bound on $\delta w_{t_{1},t_{2}}^{\eta}$. Details are left to the reader. We only notice that the constant $c_1$ has to be taken such that $c_1>c_2+c_3$, where the $c_2, c_3$ are the same constants of the proof of (\ref{eq:ineq-delta-z-eta}). 

\end{proof}

\smallskip

At this point, we already have the main tools in order to prove the main result of the section.


\begin{proof}[Proof of Theorem \ref{thm:exis-den}]
Taking into account that the Malliavin derivative $\cd z_t$ satisfies equation (\ref{eq:malder}), we will apply (\ref{lemma9}) to the following situation: $z^s_t=\cd_s z_t$, $R_\xi=\sig(z_\xi)$ and $w^s_t=\Psi_s(t)$, where we recall that
\[
\Psi_s(t)   =  \tilde{G}_M(B) \, \sig(z_s) \, K(t,s)
+ 2 \ffi'_M(U_{\gam,p}(B)^{2p}) \, \tilde{\mu}_s \, z_t
\]
and $\tilde \mu_s$ is defined by (\ref{eq:tilde-mu-rho}). We also remind that, throughout the proof, we have implicitly fixed $\om$ belonging to $\Om_a$.

First, note that the hypotheses on $\sig$ guarantee that (\ref{eq:bound-R}) is satisfied. Secondly, we observe that (\ref{lemma9}) is still true is we replace $\eta\le t_1\le t_2\le 1$ by $\eta\le t_1\le t_2\le T$, for any $T\in (0,1]$. In fact, we are going to apply that result for some small enough $T$.

 Let us prove that there exists $T$ such that $\del w^s_{t_1,t_2}\leq - c_1 |t_2-t_1| s$, for all $s\leq t_1\leq t_2\leq T$. We clearly have that
\beq
\label{eq:023}
\del w^s_{t_1,t_2}=\Psi_s(t_2)-\Psi_s(t_1)=-\tilde G_M(B) \sig(z_s) (t_2-t_1) s + 2\varphi_M'(U_{\gam,p}(B)^{2p}) \tilde \mu_s (\del z_{t_1,t_2}).
\eeq
By Lemma \ref{lemma:omega-a} and the non-degeneracy condition on $\sig$, the first term on the right-hand side of (\ref{eq:023}) can be bounded by $-c_4 (t_2-t_1) \eta$, where $c_4$ is some large enough constant (see the proof of Lemma \ref{lemma:omega-a}). We will check now that, for some small enough $T$, then
\begin{equation}\label{eq:bnd-phi'-mu}
2\varphi_M'(U_{\gam,p}(B)^{2p}) \tilde \mu_s (\del z_{t_1,t_2}) \leq c_5 (t_2-t_1) s,
\end{equation}
for some (small) constant $c_5$ (which may depend on $\om$). For this, we use the boundedness of $\varphi'_M$, apply 
Lemma  \ref{lemma:mu} (thus take $T$ small enough) and take into account the fact that, as it can be deduced from the existence result Theorem \ref{t1}, the solution $z$ is indeed Lipschitz continuous (with Lipschitz constant depending on $M$). Altogether this yields
\[
2\varphi_M'(U_{\gam,p}(B)^{2p}) \tilde \mu_s (\del z_{t_1,t_2}) \leq C \|B\|_{\gam+\ep}^{2p-1} \, s^\beta (t_2-t_1)
\leq c_5 (t_2-t_1) s,
\]
with $c_5=C \|B\|_{\gam+\ep}^{2p-1} \, T^{\beta-1}$, where we recall that $\beta=(2p-1)\ep-\gam$.

\smallskip

Therefore, taking $p$ large enough such that $c_5<c_4$ and plugging (\ref{eq:bnd-phi'-mu}) into (\ref{eq:023}), we obtain 
\[
\del w^s_{t_1,t_2}=\Psi_s(t_2)-\Psi_s(t_1) \leq - c_1 (t_2-t_1) s,\quad \text{for all} \quad s\leq t_1\leq t_2 \leq T,
\]
where $c_1$ can be large enough (since $c_4$ can be as well). 

Then, we are in position to apply (\ref{lemma9}) and we obtain that 
\[
\del(\cd z_t)_{t_1,t_2} \leq -|t_2-t_1| s, \quad \text{for all} \quad s\leq t_1\leq t_2 \leq T.
\]
This implies that we will be able to find $T_0<T$ such that
\beq
\del (\cd z_t)_{t_1,t_2} \leq -T_0 (t_2-t_1)<0, \quad \text{for all} \quad T_0\leq t_1\leq t_2 \leq T
\label{eq:024}
\eeq
and this holds almost surely in $\Om_a$. At this point, we have two possible situations:
\begin{itemize}
\item[(i)] If $\cd_{T_0} z_t \neq 0$, the continuity of the Malliavin derivative implies that it does not vanish in an interval around $T_0$. Thus the norm $\|\cd z_t\|_\hac$ must be strictly positive a.s. on $\Om_a$. 
\item[(ii)] If $\cd_{T_0} z_t =0$, condition (\ref{eq:024}) implies that $\cd_s z_t<0$ for $s\in (T_0,T]$, therefore we have again that $\|\cd z_t\|_\hac >0$ a.s. on $\Om_a$, by the continuity of $\cd z_t$.
\end{itemize}
This concludes the proof.  

\end{proof}



\begin{thebibliography}{99}

\bibitem{alos-nualart} Al\`os, E. and Nualart, D. Stochastic integration with respect to the fractional Brownian motion.  Stoch. Stoch. Rep. 75 (2003), no. 3, 129--152.

\bibitem{BH} Baudoin, F and Hairer, M. A version of H\"ormander's theorem for the fractional Brownian motion. Probab. Theory Related Fields 139 (2007), no. 3-4, 373--395.

\bibitem{bp} Buckdahn, R. and Pardoux, E. Monotonicity methods for white noise driven quasi-linear SPDEs.
Diffusion processes and related problems in analysis, Vol. I (Evanston, IL, 1989),  219-233, Progr. Probab., 22,
Birkhauser Boston, Boston, MA, 1990.


\bibitem{DGT} Deya, A., Gubinelli, M. and Tindel, S. Non-linear rough heat equations. Arxiv Preprint.

\bibitem{DP}
Donati-Martin, C. and Nualart, D. Markov property for elliptic stochastic partial differential equations.  Stochastics Stochastics Rep.  46  (1994),  no. 1-2, 107-115

\bibitem{err} Erraoui, M., Nualart, D. and Ouknine, Y. Hyperbolic stochastic partial differential equations with additive fractional Brownian sheet. Stochastics and Dynamics 3 (2003), 121--139.

\bibitem{Ev}
Evans, L. Partial differential equations. Second edition. Graduate Studies in Mathematics, 19. American Mathematical Society, Providence, RI, 2010.

\bibitem{Fe}Fernique, X. Fonctions al\'eatoires gaussiennes, vecteurs al\'eatoires gaussiens. Universit\'e de Montr\'eal, Centre de Recherches Math\'ematiques, Montr\'eal, QC, 1997.

\bibitem{Ga} Garsia, A. Continuity properties of Gaussian processes with multidimensional time parameter.  
Proceedings of the Sixth Berkeley Symposium on Mathematical Statistics and Probability Vol. II: Probability theory,  369--374. Univ. California Press (1972). 

\bibitem{GLT} Gubinelli, M., Lejay, A. and Tindel, S. Young integrals and SPDEs. Potential Anal. 25 (2006), no. 4, 307--326. 

\bibitem{GT} Gubinelli, M. and Tindel, S. Rough evolution equations. Ann. Probab. 38 (2010), no. 1, 1--75.

\bibitem{kusuoka} Kusuoka, K. The non-linear transformation of Gaussian measure on Banach space and its absolute continuity (I),
J. Fac. Sci. Univ. Tokyo IA 29 (1982) 567--597.

\bibitem{LT} Le\'on, J.A. and Tindel S.  Malliavin calculus for fractional delay equations. Arxiv Preprint (2009).

\bibitem{LS} Lototsky, S. V.; and Stemmann, K. Stochastic integrals and evolution equations with Gaussian random fields.  Appl. Math. Optim.  59  (2009),  no. 2, 203?232.

\bibitem{MS} Martinez, T. and Sanz-Sol\'e, M. A lattice scheme for stochastic partial differential equations of elliptic type in dimension $d\geq 4$. Appl. Math. Optim. 54 (2006), no. 3, 343--368. 

\bibitem{LQ} Lyons, T. and Qian, Z. System control and rough paths, Oxford University Press, 2002.

\bibitem{MN} Maslowski, B. and Nualart, D. Evolution equations driven by a fractional Brownian motion.
J. Funct. Anal. 202 (2003), No. 1, 277--305.

\bibitem{nualart} Nualart, D. The Malliavin calculus and related topics. Second edition. Probability and its Applications (New York). Springer-Verlag, Berlin, 2006.

\bibitem{NR} Nualart, D. and  Rascanu, A. Differential equations driven by fractional Brownian motion. 
Collect. Math. 53 (2002), No.1, 55--81.

\bibitem{NP}
Nualart, D. and  Pardoux, E. Second order stochastic differential equations with Dirichlet boundary conditions.  Stochastic Process. Appl.  39  (1991),  no. 1, 1-24. 

\bibitem{NS} Nualart, D. and Saussereau, B. Malliavin calculus for stochastic differential equations driven by a fractional Brownian motion. 
Stochastic Process. Appl. 119 (2009), no. 2, 391--409. 

\bibitem{Pr} Capietto, A. and Priola, E. Uniqueness in Law for Stochastic Boundary Value Problems. Journal of Dynamics and Differential Equations, to appear.

\bibitem{rv} Russo, F. and Vallois, P. Forward, backward and symmetric stochastic integration. Probab. Theory Related Fields 97 (1993), no. 3, 403--421.

\bibitem{rv-LNM} Russo, F. and Vallois, P. Elements of stochastic calculus via regularization. S\'eminaire de Probabilit\'es XL, 147--185, 
Lecture Notes in Math., 1899, Springer, Berlin, 2007.

\bibitem{QT} Quer-Sardanyons, L. and Tindel, S. The 1-d stochastic wave equation driven by a fractional Brownian sheet. 
Stochastic Process. Appl. 117 (2007), no. 10, 1448--1472.

\bibitem{ST} Sanz-Sol\'e, M. and Torrecilla, I. A fractional Poisson equation: existence, regularity and approximations of the solution. 
Stoch. Dyn. 9 (2009), no. 4, 519--548. 

\bibitem{St} Stroock, D. W. Probability Theory, 3rd Edition, Cambridge University Press, 
Cambridge, 1993.

\bibitem{Te} Teichmann, J. Another approach to some rough and stochastic partial differential equations. Arxiv Preprint. 

\bibitem{Ti}
Tindel, S. Diffusion approximation for elliptic stochastic differential equations.  Stochastic analysis and related topics, V (Silivri, 1994),  255-268, Progr. Probab., 38, Birkhäuser Boston, Boston, MA, 1996.

\bibitem{TTV} Tindel, S., Tudor, C. A. and  Viens, F. Stochastic evolution equations with fractional Brownian motion.
Probab. Theory Related Fields 127 (2003), no. 2, 186--204.

\bibitem{walsh} Walsh, J. B. An introduction to stochastic partial differential equations.
\'Ecole d'\'et\'e de probabilit\'es de Saint-Flour XIV - 1984, Lect. Notes Math. 1180, 265-437 (1986).

\bibitem{young} Young, L. C. An inequality of H\"older type, connected with Stieltjes integration. Acta Math. 67 (1936), 251--282.


\end{thebibliography}
\end{document}